\newcommand{\dom}{\operatorname{dom}}
\declaretheorem{theorem}
\declaretheorem[name=\textbf{Remark},style=definition]{Remark}
\declaretheorem[name=\textbf{Definition},style=definition]{definition}
\declaretheorem{proposition}
\declaretheorem{example}
\newcommand{\thref}[1]{\Cref{#1}}
\newcommand{\st}{\text{ s.t. }}
\newcommand{\cP}{\mathcal{P}}
\newcommand{\cU}{\mathcal{U}}
\newcommand{\varopt}{\mathbf{x}}
\newcommand{\val}{\operatorname{val}}
\renewcommand{\todo}[1]{}
\begin{document}

\begin{frontmatter}
\author[Navier]{Jeremy Bleyer\corref{cor1}}
\author[Cermics]{Vincent Leclère}

\address[Navier]{Laboratoire Navier, ENPC, Univ Gustave Eiffel, CNRS, Marne-la-vallée, France}

\address[Cermics]{CERMICS, ENPC, Marne-la-vallée, France}

 \cortext[cor1]{\textsl{Correspondence to}: J. Bleyer, Laboratoire Navier, 6-8 av Blaise Pascal, Cit\'e Descartes, 77455 Champs-sur-Marne, France, Tel : +33 (0)1 64 15 37 43, Email: \texttt{jeremy.bleyer@enpc.fr}}

\title{Robust limit analysis theory for computing worst-case limit loads under uncertainties}

\begin{abstract}
This work proposes a novel theoretical framework of robust limit analysis i.e. the computation of limit loads of structures  in presence of uncertainties using limit analysis and robust optimization theories. We first derive generic robust limit analysis formulations in the case of uncertain material strength properties. We discuss various ways of modeling uncertain strength properties and introduce the notion of robust strength criteria. 
We formulate static and adjustable robust counterparts of the corresponding uncertain limit analysis problems. Depending on the chosen strength uncertainty model, we also discuss tractable reformulations or approximations which can be implemented numerically using conic programming solvers. 
Building upon these results, we also derive robust limit analysis formulations in presence of loading uncertainties. 
Finally, various applications illustrate the versatility of the proposed framework including the derivation of a robust Mohr-Coulomb criterion with uncertain cohesion and friction angle, the computation of limit load of a structure subject to the partial loss of some components or the limit load of a truss structure under multiple load uncertainties.
\end{abstract}

\end{frontmatter}
\section{Introduction}
In the context of perfect plasticity, the theory of limit analysis \citep{hill1950mathematical} provides a powerful direct method for estimating the limit load of a structure, relying only on the compatibility between equilibrium conditions on the one hand and plasticity conditions on the other hand. 
It is based on two bounding theorems based either on a static formulation or a kinematic formulation. The static approach provides a lower bound estimate of the limit load when restricting to a discretized set of admissible stresses. 
Conversely, the dual kinematic approach relies on the search for critical failure mechanisms and provides an upper bound estimate of the limit load. In the yield design theory \citep{salencon1983calcul,salenccon2013yield}, plasticity yield conditions are replaced by a generic material strength criterion. One then obtain  an estimate of the structure ultimate load, provided that the material presents sufficient ductility. 
Applications of this generic framework cover a wide range of mechanical and civil engineering problems including geotechnical problems (slope stability, footing capacity, etc.) \citep{chen2013limit}, reinforced-concrete structures \citep{chen2007plasticity, nielsen2016limit} or slabs \citep{johansen1962yield}, limit load of steel frames, plates or shell structures \citep{save1995atlas,save1997plastic}, etc. 
In particular, this kind of reasoning forms the basis of Ultimate Limit State design in civil engineering design codes.

The main advantage of limit analysis is that it avoids the need of solving a step-by-step elastoplastic analysis until plastic failure, the latter being in general detected by observing large displacements or non-convergence of the resolution algorithm. 
Limit analysis problems can instead be formulated as convex optimization programs and its numerical implementation has recently benefited from the development of efficient convex optimization solvers.
 More precisely, most strength criteria can be expressed using second-order cone or semi-definite constraints \citep{bisbos2007second,makrodimopoulos2010remarks}, resulting in so-called second-order cone programs (SOCP) or semi-definite programs (SDP) respectively. 
 Based on the seminal work of \citet{karmarkar1984new} who developed  \textit{interior-point} (IP) algorithms for linear programming (LP), interior-point algorithms have later been extended to SOCP and SDP with great efficiency and robustness \citep{nesterov1994interior}. 
 Combined with a specific spatial discretization, e.g. the finite-element method, the use of IP solvers is today the state-of-the-art approach for solving large-scale limit analysis problems. The reader can refer to \citep{krabbenhoft2008three,martin_finite-element_2008,portioli2014limit,vincent2018yield} for some applications and to \citep{bleyer2019automating} for a recent overview on this matter.\\

In the corresponding limit analysis convex formulation, given data consist of a known material yield criterion, a known loading (variable and fixed parts) and a known geometry. However, in real applications, uncertainty may affect all these data, e.g. the amplitude or direction of some loading may not be known exactly, material strength properties can vary locally, geometry may not always be precisely known, etc. 
Furthermore, engineers are interested in designing a structure which would be robust to such uncertainties, meaning that the design load should always be safe for all possible realizations of the uncertain parameters. This requires designing the structure against a \textit{worst-case situation}. 
By varying the level of uncertainty, robustness of different constructive systems could therefore be assessed by comparing how much the collapse load depends on the uncertainty level. 
In doing so, one would distinguish fragile structures (i.e. sensitive to uncertainties) from robust structures. 
Unfortunately, it is very difficult to systematically anticipate the worst-case scenario or to explicitly explore all realizations of the uncertainty.\\

As regards practical applications, uncertainties are important to consider in the design stage in many cases. 
For instance, in geomechanics, soils are known to exhibit large spatial variability of their mechanical properties, especially in the depth direction. 
Available properties measurements are often very scarce in practice so that one has to resort to very conservative values of the soil properties. Moreover, for cohesive-frictional soils, shear strength and friction angle are in general negatively correlated \citep{wang2016quantifying}. 
It is therefore not necessarily always obvious what would be good worst-case values to retain in a robust design. 
Similarly, many heterogeneous materials are modeled using a homogeneous effective strength criterion \citep{suquet1985elements,de1986fundamental} assuming a well-defined microstructure, e.g. periodic masonry \citep{de1997homogenization}, jointed rocks \citep{taliercio1988failure}, fiber-reinforced materials, etc. 
Usually, real materials are not exactly periodic or fractures/fibers are not always perfectly aligned along a given direction. 
In these cases, the effective properties computed using homogenization theory may overestimate the real strength capacity of the material.

Accounting for loading uncertainty is also crucial for a robust design as many simplifying hypotheses are usually assumed when considering multiple loading parameters. For instance, assessing the bearing capacity of a footing usually assumes a single vertical loading. 
In order to account for additional lateral loading or applied bending moment to the footing, complex interaction diagrams must be determined for all possible loading combinations, inducing large computational costs \citep{kusakabe2010bearing,graine20213d}. 
To simplify this procedure, it would therefore be relevant to compute the reduced bearing capacity relative to a dominant vertical loading accounting for uncertainties in the value of secondary loadings (lateral force or bending moment). 

Another important application concerns the assessment of structural robustness which aims at evaluating the residual capacity of a structure under a catastrophic event (explosion, column removal, etc.). 
Such kind of events is usually not accounted for at the design stage and assessing structural robustness often relies on a scenario-based approach. Critical scenarios must be defined and individually computed through complex non-linear computations \citep{fu2009progressive, diab2021progressive}. 
This approach is therefore limited to the definition of appropriate scenarios which cannot be too numerous due to the high numerical cost induced when evaluating them.
 Combination of scenarios is also usually approximate due to the non-linearity of the problem.
 In this context, it would also be relevant to devise a procedure accounting for a potential failure of some elements without having to define and explore individually each scenario.
\\

Accounting for uncertainty in mechanical problems can be achieved using stochastic models.
 This approach requires the introduction of appropriate probability distributions of the uncertain parameters.
 As regards plastic limit analysis, general definitions of the probability of collapse have been given in \citep{salencon1983calcul,augusti1984probabilistic}, later revisited by \citet{alibrandi2008use} using stochastic stress vectors.
 Various works have also considered the numerical computation of limit loads in a stochastic setting such as \citep{yang2012system,staat2014limit} or \citep{bjerager1983influence,huang2013quantitative,kasama2016effect,ali2016application} for geotechnical applications.
 For instance, the reader can refer to \citep{jiang2022advances} for a recent review of slope stability in spatially variable soil

Alternative approaches seek to assess structural robustness or reliability through non-probabilistic approaches.
 For instance, \citet{ben1994non,ben1995non} relies on a convex model of uncertainty to define reliability of linear systems.
 In \citep{matsuda2008robustness}, the authors consider uncertain limit analysis of truss structures with very similar sources of uncertainties as those investigated in this work.
 For this purpose, they used the info-gap decision theory \citep{ben2006info} which is however known to be difficult to apply in practice since robustness functions are very hard to compute in general.
 For the very specific case of truss structures, \citet{matsuda2008robustness} however manage to compute it via the resolution of a LP problem.
 Similarly, mixed-integer programming approaches can also be used to compute a worst-case limit load \citep{kanno2007worst} but solving such NP-hard problems is notoriously difficult and almost impossible for large-scale problems.
 Using a chance-constrained programming approach, \citet{tran2018shakedown, tran2020direct} considered limit analysis and shakedown theorems under normal or log-normal strength uncertainties for von Mises plasticity.
  In this simple setting, chance-constrained programs can be explicitly reformulated as convex deterministic problems with reduced strength properties (this result will be recovered later in section \ref{sec:homothetic-uncertainty}).
  \citet{tran2021direct} also considered random loadings but the resulting deterministic problem turns out to be non-convex which is highly undesirable for large-scale numerical applications.
 \\

In this work, we follow the same spirit as the previous works handling uncertainty using non-probabilistic convex models but using a different route.
 We want to explore developments in robust optimization theory \citep{ben2009robust, bertsimas2011theory} to devise a robust counterpart of standard limit analysis problems in order to compute, in an affordable manner, a collapse load estimate corresponding to, or at least safely approximating, the worst-case limit load for all possible uncertainty realizations.
 Robust Optimization (RO) is a relatively new and active field of research with a growing number of applications  \citep{bertsimas2004price, bertsimas2011theory, gabrel2014recent} in finance, supply chain, energy, healthcare, etc.
 RO is an appealing way of handling uncertainty in optimization problems as it does not assume any probability distribution for the uncertain parameters but relies instead on the notion of \textit{uncertainty sets}.
 More precisely, it seeks to optimize some objective function provided that the optimization (or decision) variables remain feasible for any realization of the uncertain data in the uncertainty set.
 The popularity of RO can be attributed to the fact that dealing with such hard uncertain constraints can be reformulated as standard deterministic constraints for various types of problems and uncertainty sets.
 This notion of \textit{tractability} \citep{bertsimas2011theory} is one of the key aspects in deriving robust counterparts in the RO framework.
 A second important aspect is the \textit{conservatism} of RO formulations \citep{bertsimas2004price}.
 In its most basic formulation, RO considers only static decision variables, the value of which are not affected by the uncertainty realization.
 In some cases, this results in overly-conservative or even leads to infeasible solutions.
 Adjustable Robust Optimization \citep{yanikouglu2019survey} considers also decision variables to be adjustable to the uncertainty realization.
 The corresponding adjustable robust counterpart is then less conservative than its initial static counterpart but also much more complex to solve and involves solving procedure often relying on appropriate decision rules \citep{ben2009robust,gorissen2015practical}.\\

Benefiting from the deep conceptual link between limit analysis and convex optimization, we endeavour to formulate a robust limit analysis theory.
 Contrary to previously mentioned contributions using info-gap or chance-constrained programming, we aim here at providing a generic framework which can be applied to different mechanical models (solids, trusses, beams, shells, etc.) and generic strength criteria (polyhedral or non-polyhedral ones).
  Most importantly regarding computational efficiency, we concentrate on obtaining tractable robust counterparts i.e. convex programs the size of which scales at most polynomially with the deterministic problem data.
   This important constraint will however allow us to benefit from the efficiency of IP solvers as mentioned previously when considering large-scale realistic applications.\\

The manuscript is organized as follows: section \ref{sec:generic-formulations} introduces robust formulations of limit analysis theory in the case of strength uncertainty; section \ref{sec:RC-strength-uncertainty} 
details the derivation of tractable robust counterparts of uncertain strength constraints arising in the previous formulations;
 section \ref{sec:AARC-main} is devoted to the specific case of affinely adjustable robust counterparts;
  finally, section \ref{sec:load-uncertainty} deal with the case of uncertain loads and section \ref{sec:conclusions} draws some conclusions and perspectives for future research.

\todo{Check for min/max vs. inf/sup occurrences}
\section{Generic formulations of robust limit analysis theory}\label{sec:generic-formulations}

\subsection{A robust optimization primer}
\label{ssec:robust_optim_primer}
In this section we give a quick overview of the \emph{robust optimization} framework and tools we are using in this paper.

\subsubsection{Optimization under uncertainty}
\label{ssec:optim_under_uncertainty}

We consider the following \emph{nominal} optimization problem,
\begin{subequations}
  \label{pb:P_zeta}
  \begin{align}
   (\cP_{\bzeta_0}) \qquad  \val(\bzeta_0) \quad =\quad 
    \max_{\varopt} \quad &  J(\varopt) \\
    \st \quad & g(\varopt) \in G(\bzeta_0) 
    \label{cst:zeta}
  \end{align}
\end{subequations}
  that is we are looking for the vector $\varopt$, that maximizes the  objective function $J$,
  while satisfying the constraint $g(\varopt) \in G(\bzeta_0)$ for some set $G$ and constraint function $g$.

  In the above problem, $\bzeta_0$ is a vector of given (nominal) parameters. 
  Unfortunately, in numerous applications, the parameters $\bzeta_0$ are not well known, and the vast field of optimization under uncertainty tackles this fact through various approaches.
  For instance, \emph{chance-constrained programming} consists in assuming that $\bzeta$ is a random variable, and replaces the constraint~\eqref{cst:zeta} by 
  \begin{equation}
    \label{cst:chance_constraint}
    \mathbb{P} \left( g(\varopt) \in G(\bzeta_0) \right) \geq 1- \varepsilon
  \end{equation}  
  where $\varepsilon$ is a confidence parameter.
  While the chance constraint~\eqref{cst:chance_constraint} is intuitive enough, it is generally mathematically difficult and requires to know the exact law of the uncertain parameter, which is often hard to obtain.
  \emph{Robust Optimization} offers a third approach, by simply assuming that the uncertain parameter $\bzeta$ lies in an \emph{uncertainty set} $\cU$, and that the optimal solution $\varopt_R$ shall maximize the objective function while satisfying the constraint \emph{for all possible values of $\bzeta$}, that is~\eqref{cst:zeta} now reads
  \begin{equation}
    \label{cst:robust}
    g(\varopt) \in G(\bzeta) \quad \forall \bzeta \in \cU
    \qquad \text{or equivalently} \qquad
    g(\varopt) \in \bigcap_{\bzeta \in \cU} G(\bzeta)
  \end{equation}
  \begin{Remark}
    \label{rk:sensitivity_analysis}
  Both of these approaches are fundamentally different from \emph{sensitivity analysis}, which consists in solving the problem $(\cP_{\bzeta})$ for various values of $\bzeta$ to observe the variation of the value of the problem $\val(\bzeta)$ with the parameter $\bzeta$. 
  Indeed, in sensitivity analysis the optimal solution $\varopt_{\bzeta}$ is chosen while knowing the actual value of the uncertain parameter $\bzeta$, while the optimal solution to a chance constrained or robust problem is the same, and take into account, all possible values of $\bzeta$.
\end{Remark}

A difficulty of robust optimization modeling is the choice of the uncertainty set $\cU$. 
This choice is made with two different objectives in mind: representativity of uncertainty and \emph{tractability} of the induced problem. 
Representativity means that $\cU$ should encompass all or most of the reasonable possible values of $\bzeta$.\footnote{This is sometimes topped-up by theorems saying that if $\bsig$ satisfies~\eqref{cst:robust} for some uncertainty set $\cU$, then it satisfies the chance constraint~\eqref{cst:chance_constraint} for some $\varepsilon$ and a large class of law of $\bzeta$. 
Discussion of such \emph{probabilistic guarantees} falls out of the scope of this paper.
}
Tractability means that the robust optimization problem can be numerically (approximately) solved efficiently.
Examples of classical uncertainty sets are given in \cref{sec:uncertainty-sets}.

When the uncertainty set is not tractable, a common method consists in approximating $\cU$ by a more tractable uncertainty set $\hat{\cU}$. 
We say that $\hat{\cU}$ is a \emph{safe approximation} of $\cU$ if any $\varopt$ satisfying the robust constraint \eqref{cst:robust} with $\hat{\cU}$ also satisfies the original constraint \eqref{cst:robust} with $\cU$.
For example, this is the case if $\cU \subseteq \hat{\cU}$.

\subsubsection{Typical uncertainty sets}\label{sec:uncertainty-sets}
Robust optimization offers great freedom in modeling uncertainty through the choice of the uncertainty set.
 The latter should however remain simple enough to ensure tractability of the robust counterpart. 
 Typical examples of such simple uncertainty sets often encountered in the literature are :
\begin{itemize}
\item the \textit{box uncertainty} or $\ell_\infty$-norm:
\begin{equation}
\Uu_\infty = \{\bzeta \in \RR^m \st \|\bzeta\|_\infty = \max_j |\zeta_j|\leq 1\}
\end{equation}
\item the \textit{ellipsoid uncertainty} or $\ell_2$-norm:
\begin{equation}
\Uu_2 = \{\bzeta \in \RR^m \st \|\bzeta\|_2 = \sqrt{\sum_j \zeta_j^2}\leq 1\}
\end{equation}
\item the \textit{cross-polytope uncertainty} or $\ell_1$-norm:
\begin{equation}
\Uu_1 = \{\bzeta \in \RR^m \st \|\bzeta\|_1 = \sum_j |\zeta_j|\leq 1\}
\end{equation}
\item the \textit{budget uncertainty} or $\ell_1\cap \ell_\infty$-norm:
\begin{equation}
\Uu_\text{budget}(\Gamma) = \{\bzeta \in \RR^m \st \|\bzeta\|_\infty \leq 1 \text{ and } \|\bzeta\|_1 \leq \Gamma\}
\label{eq:budget-uncertainty}
\end{equation}
\end{itemize}
The above uncertainty sets correspond to unit balls of standard norms for non-dimensional uncertainty parameters $\bzeta$. They can obviously be generalized to non-unit balls with radius $R$ or to parameters $\bzeta$ with different physical dimensions (see section \ref{sec:physical-uncertainty}).
The box uncertainty is the most conservative one as uncertain parameters can take all values between $\pm 1$ without restriction. 
Using such a set ignores correlation between the parameters. 
As a result, the obtained robust solutions are, in general, over-conservative in practice. 
The ellipsoid uncertainty has the advantage of being less conservative but it results in robust counterparts with higher complexity. 
For this reason, many interesting results in the robust optimization literature can be obtained when restricting to a polyhedral uncertainty.
 This makes the cross-polytope uncertainty quite appealing.
 It assumes that one uncertain parameter can take its maximum value only if all remaining parameters are zero.
 A generalization of this observation can be obtained using the budget uncertainty $\Uu_\text{budget}(\Gamma)$ which states that at most $\Gamma$ parameters can take their maximum value simultaneously.
 Note that if $\Gamma=1$, one recovers the cross-polytope uncertainty whereas the box uncertainty is obtained for $\Gamma\geq m$.
\\

\subsubsection{Robust optimization with recourse}
\label{ssec:robust_with_recourse}

Up to now, we have mainly discussed a \emph{static} robust optimization problem where a decision variable $\varopt$ has to be resilient to all possible uncertainty.
However, in some cases the optimization problem has two types of variable, first stage variables $\varopt$ that are the same for all uncertainty, and \emph{recourse} variables that can be adapted to the actual value of the uncertainty.
More precisely, we consider a problem of the form 
\begin{equation}
  \label{pb:ARC}
  (\text{ARC}) \qquad \max_{\varopt}\quad 
  \bigg\{ J(\varopt) \quad \bigg| \quad 
  \forall \bzeta \in \cU, \quad 
  \exists \bsig(\bzeta), \quad
  g(\varopt, \bsig(\bzeta)) \in G(\bzeta)
  \bigg\} .
\end{equation}
Those \emph{adaptable recourse} problems, are numerically challenging, essentially because the recourse variable $\bsig$ is a function of the uncertainty $\bzeta$.
A classical approach consists in restricting the class of function in which we can take $\bsig$, to obtain an approximated problem. 
Let's highlight this idea with two main function classes : constant and affine functions.
Assuming $\bsig$ to be constant consists in considering that $\bsig$ is a first-stage variable which does not adapt to the uncertainty, contrary to a recourse variable.
This leads to solving a standard static robust optimization problem, namely
\begin{equation}
  \label{pb:RC}
  (\text{RC}) \qquad \max_{\varopt, \bsig}\quad 
  \bigg\{ J(\varopt) \quad \bigg| \quad 
  \forall \bzeta \in \cU, \quad
  g(\varopt, \bsig) \in G(\bzeta)
  \bigg\} .
\end{equation}
Choosing $\bsig$ in the larger class of affine function consists in looking for a matrix $A$ and a vector $\bsig_0$ such that $\bsig(\bzeta)=A\bzeta + \bsig_0$ for all $\bzeta \in \cU$, yielding the following equivalent static optimization problem 
\begin{equation}
  \label{pb:AARC}
  (\text{AARC}) \qquad \max_{\varopt, A, \bsig_0}\quad 
  \bigg\{ J(\varopt) \quad \bigg| \quad 
  \forall \bzeta \in \cU, \quad
  g(\varopt, A\bzeta + \bsig_0) \in G(\bzeta)
  \bigg\} .
\end{equation}
Note that the problem (RC) is a more constrained version of problem (AARC) (as a constant function is a specific affine function) which his itself a more constrained version of problem (ARC),
thus we have the following inequality
\begin{equation}
  \val(\text{RC}) \leq \val(\text{AARC}) \leq \val(\text{ARC})
\end{equation}

\subsection{Nominal limit analysis}
Let us know apply the above-mentioned concepts to limit analysis problems. Formulated in its static form, a limit analysis problem amounts to finding the maximal value of the load factor such that one can find a stress field in equilibrium with the loading and satisfying the material local strength criterion.
 In the case of certain data, we will refer to it as the \textit{nominal} limit analysis problem and denote the corresponding maximum load factor as
$\lambda_\text{N}$.
 The latter can be given as the solution to the following maximization problem:
\begin{equation}
\begin{array}{rll}
\displaystyle{\lambda_\text{N}=\max_{\lambda, \bsig}} & \lambda &\\
\text{s.t.} & \div\bsig + \lambda \bf^\text{r} + \bf^\text{f} = 0  & \text{in }\Omega \\
& \bsig\cdot\bn = \lambda \bt^\text{r} + \bt^\text{f}  &\text{on } \partial \Omega_T \\
& \bsig \in G & \text{in }\Omega
\end{array} \tag{\texttt{N}}\label{nominal-LA}
\end{equation} 
where $\lambda$ is the load factor, $\bsig$ the Cauchy stress field in $\Omega$, $\bf^\text{r}$ (resp. $\bf^\text{f}$) the variable reference (resp. the fixed) body force, $\bt^\text{r}$ the variable 
reference (resp. the fixed) contact force prescribed on some part $\partial \Omega_T$ of the boundary and $G$ is the material yield/strength criterion which we assume to be a convex set (possibly unbounded) containing 0.

In the above, the first two constraints correspond to the local balance equation and traction boundary conditions, whereas the last one corresponds to the strength condition which must be satisfied at all point $\bx\in\Omega$. 
Note that the strength criterion $G$ could well be a function of $\bx$. 
For the sake of notational simplicity we do not make this dependence on $\bx$ explicit (similarly for $\bsig$, $\bf$, etc.). 
Similarly, we also drop the fact that the constraints must be enforced in $\Omega$ and on $\partial \Omega_T$ in the following.

\begin{Remark}
Note that this work make use of a 3D continuum formulation for the presentation of robust limit analysis problems. 
However, all the introduced notions can also equally apply to other mechanical models (e.g. truss, beam, plate or shell theories) by replacing the first two constraints with the corresponding local balance equations and stress boundary conditions and the strength criterion $G$ by the corresponding strength condition expressed in terms of generalized forces (e.g. normal force, bending moment, etc.).
\end{Remark}

\subsection{Uncertain limit analysis with strength uncertainties}
We first consider the simple case in which the loading is certain but the material may possess uncertain properties such that the strength criterion is now written as $G(\bzeta)$ where $\bzeta\in \Uu \subseteq \mathbb{R}^m$ is a vector of uncertain parameters and $\Uu$ the corresponding \textit{uncertainty set} in which the uncertainty must vary (see section \ref{sec:uncertainty-sets}). From now on, we assume $\Uu$ to be closed convex and full dimensional. 
 The maximum load factor now becomes uncertain i.e. it depends on the value of the uncertainty realization:
\begin{equation}
\begin{array}{rll}
\displaystyle{\lambda^+(\bzeta) =\max_{\lambda, \bsig}} & \lambda \\
\text{s.t.} & \div\bsig + \lambda \bf^\text{r} + \bf^\text{f} = 0 \\
& \bsig\cdot\bn = \lambda \bt^\text{r} + \bt^\text{f}   \\
& \bsig \in G(\bzeta) 
\end{array} \label{eq:uncertain-LA}
\end{equation}

The main purpose of robust optimization is to provide worst-case solutions to a given optimization problem.
 Our proposed theory of robust limit analysis  therefore aims at evaluating the worst-case limit load among all possible realizations.
%  To achieve this purpose, we can define different \textit{robust counterparts} of the uncertain problem \eqref{eq:uncertain-LA}, namely \textit{static} and \textit{adjustable} robust counterparts, entailing different levels of complexity.
In the remaining of this section, we discuss various robust formulations.

 \subsubsection{Adjustable robust optimization}
For a given loading and two different given realizations of the uncertainty, one expects that the stress field will be different depending on the uncertainty realizations. The most natural approach therefore consists in considering the stress field and the corresponding load factor to be recourse variables. Thus, we are faced with an \textit{adjustable robust counterpart} (ARC) to problem \eqref{eq:uncertain-LA} defined as follows:
 \begin{equation}
 \begin{array}{rll}
 \displaystyle{\lambda_\text{ARC} =\min_{\bzeta\in\Uu}\lambda^+(\bzeta) =\min_{\bzeta\in\Uu}\max_{\bsig(\bzeta),\lambda(\bzeta)} } & \lambda(\bzeta) \\
 \text{s.t.} & \div\bsig(\bzeta) + \lambda(\bzeta) \bf^\text{r} + \bf^\text{f} = 0 \\
 & \bsig(\bzeta)\cdot\bn = \lambda(\bzeta) \bt^\text{r} + \bt^\text{f}  \\
 & \bsig(\bzeta) \in G(\bzeta)  
 \end{array} \tag{\texttt{ARC}} \label{eq:adjustable-LA}
 \end{equation}
 i.e. we find the largest load factor such that, for each uncertainty realization there exists an optimal stress field in equilibrium, with the corresponding collapse load factor, satisfying the strength criterion.
 
 \todo{For polyhedral uncertainty $\Uu=\operatorname{conv} \{\bzeta_1,\ldots,\bzeta^K\}$, under which conditions do we have:
 $$\lambda_\text{ARC} =\min_{\bzeta\in\Uu}\lambda^+(\bzeta) = \min_{\bzeta\in\{\bzeta_1,\ldots,\bzeta^K\}}\lambda^+(\bzeta) $$
 ?
 }
 
 In the following, we also make use of the following equivalent formulation of the ARC problem \citep{takeda2008adjustable, marandi2018static}:
 \begin{equation}
 \begin{array}{rll}
 \displaystyle{\lambda_\text{ARC} =\max_{\bar\lambda}} & \bar\lambda & \\
 \text{s.t.} & \forall \bzeta \in \Uu, \exists \bsig,\lambda \st& \div\bsig + \lambda \bf^\text{r} + \bf^\text{f} = 0 \\
 & & \bsig\cdot\bn = \lambda \bt^\text{r} + \bt^\text{f}   \\
 & & \bsig \in G(\bzeta) \\
 & & \bar\lambda \leq \lambda
 \end{array} \label{eq:adjustable-LA-1}
 \end{equation}
 where uncertainty of the objective function has been transferred to the constraints with the introduction of a static (non-adjustable) variable $\bar\lambda$.
 
%  \begin{Remark}
%  Note that when exchanging the order of the universal and existential quantifiers in \eqref{eq:adjustable-LA-1} implies that $\bsig$ and $\lambda$ are no longer adjustable variables so that \eqref{eq:adjustable-LA-1} becomes now equivalent to the static formulation \eqref{eq:static-RO-LA-1}.
%  \end{Remark}

\subsubsection{Static robust optimization}\label{sec:static-RC-formulation}
Unfortunately, as noted in~\cref{ssec:robust_with_recourse}, adaptive recourse problem are numerically challenging. 
We follow here a conservative static robust counterpart (RC) in which we look for a stress field $\bsig$ and a load factor $\lambda$, independent of the exact realization of the uncertainty, which satisfy the strength condition $G(\bzeta)$ for all $\bzeta\in \Uu$.
 The corresponding problem can be formulated as follows:
\begin{equation}
\begin{array}{rll}
\displaystyle{\lambda_\text{RC} =\max_{\lambda, \bsig}} & \lambda &\\
\text{s.t.} & \div\bsig + \lambda \bf^\text{r} + \bf^\text{f} = 0 &\\
& \bsig\cdot\bn = \lambda \bt^\text{r} + \bt^\text{f}  & \\
& \bsig \in G(\bzeta) & \forall \bzeta \in \Uu
\end{array} \label{eq:static-RO-LA-1}
\end{equation}

What makes problem \eqref{eq:static-RO-LA-1} a \textit{robust optimization} problem is the condition $\forall\bzeta \in \Uu$ in the last constraint.
 This implies that the constraint $\bsig \in G(\bzeta)$ must be fulfilled for any possible value of $\bzeta \in \Uu$.
  It is therefore an infinite-dimensional constraint.
   One of the main goals of robust optimization theory is to make such a problem tractable using standard convex optimization algorithms.

For instance, the robust constraint can be reformulated as:
\begin{equation}
\bsig \in G(\bzeta) \quad \forall \bzeta \in \Uu  \quad \Leftrightarrow \quad \bsig \in G_\text{RC} \label{eq:robust-strength-condition}
\end{equation}
when introducing:
\begin{equation}
G_\text{RC} = \bigcap_{\bzeta\in\Uu} G(\bzeta) \label{eq:G-robust}
\end{equation} 
the robust counterpart to the uncertain strength criterion. 
In order for a stress field to be admissible with respect to any possible realization of the uncertain strength criterion $G(\bzeta)$, it has to belong to the intersection of all such domains (see Figure \ref{fig:robust-criterion}).

 Now, problem \eqref{eq:static-RO-LA-1} writes as:

\begin{equation}
\begin{array}{rll}
\displaystyle{\lambda_\text{RC} =\max_{\lambda, \bsig}} & \lambda &\\
\text{s.t.} & \div\bsig + \lambda \bf^\text{r} + \bf^\text{f} = 0 &\\
& \bsig\cdot\bn = \lambda \bt^\text{r} + \bt^\text{f}  & \\
& \bsig\in G_\text{RC}
\end{array} \tag{\texttt{RC}}\label{eq:static-RO-LA}
\end{equation}
which is now independent of the uncertainty realization. 
As a result, problem \eqref{eq:static-RO-LA} is a classical limit analysis problem with a different strength criterion given by \eqref{eq:G-robust}. 
This makes problem \eqref{eq:static-RO-LA} very appealing provided that a simple expression for $G_\text{RC}$ can be found. 
It is however very hard to determine a simple expression for the infinite-dimensional set intersection appearing in \eqref{eq:G-robust}. 
Exact or approximate reformulation of strength criteria robust counterparts  are discussed in section \ref{sec:RC-strength-uncertainty}.

\begin{figure}
\begin{center}
\includegraphics[width=0.5\textwidth]{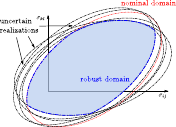}
\end{center}
\caption{Robust strength domain $G_\text{RC}$ (in blue) obtained as the intersection of various uncertain realizations $G(\bzeta)$ (in black) of a nominal domain (in red).} \label{fig:robust-criterion}
\end{figure}

\subsubsection{Affinely adjustable robust optimization}\label{sec:AARC-formulation}

Unfortunately, if (RC) problem are numerically tractable, the obtained approximation might be unreasonably conservative. 
As mentioned earlier, a middle ground is the \emph{affinely adjustable robust counterpart} (AARC), which consists in looking for 
adjustable variables $\bsig(\bzeta)$ and $\lambda(\bzeta)$ that are affine functions of the uncertain variable, the so-called \textit{affine decision rule} \citep{ben2004adjustable}:
\begin{subequations}
\label{eq:affine-decision-rule}
\begin{align}
\bsig(\bzeta) &= \bsig_0 + \sum_{j=1}^m \bsig_j\zeta_j \label{eq:affine-decision-rule-sig} \\
\lambda(\bzeta) &= \lambda_0 + \sum_{j=1}^m \lambda_j\zeta_j \label{eq:affine-decision-rule-lamb}
\end{align}
\end{subequations}
where the $\bsig_i$ (resp. $\lambda_i$) represent $1+m$ different stress fields (load factor variables) which are now static optimization variables. 
Inserting the affine decision rules \eqref{eq:affine-decision-rule-sig}-\eqref{eq:affine-decision-rule-lamb} into \eqref{eq:adjustable-LA}, the corresponding AARC reads:
\begin{equation}
\begin{array}{rll}
\displaystyle{\lambda_\text{AARC} =\max_{\bsig_i,\lambda_i}}\min_{\bzeta\in\Uu} & \displaystyle{\lambda_0 + \sum_{j=1}^m \lambda_j\zeta_j}\\
\text{s.t.} &\displaystyle{ \div\left(\bsig_0 + \sum_{j=1}^m \bsig_j\zeta_j\right)+ \left(\lambda_0 + \sum_{j=1}^m \lambda_j\zeta_j\right) \bf^\text{r} + \bf^\text{f} = 0 } \\
& \displaystyle{\left(\bsig_0 + \sum_{j=1}^m \bsig_j\zeta_j\right)\cdot\bn = \left(\lambda_0 + \sum_{j=1}^m \lambda_j\zeta_j\right)\bt^\text{r} + \bt^\text{f}  }\\
& \left(\displaystyle{\bsig_0 + \sum_{j=1}^m \bsig_j\zeta_j}\right) \in G(\bzeta) 
\end{array} \label{eq:AARC-LA}
\end{equation}
which can also be reformulated as follows:
\begin{equation}
\begin{array}{rlll}
\displaystyle{\lambda_\text{AARC} =\max_{\bar\lambda,\bsig_i,\lambda_i}} &\bar\lambda&\\
\text{s.t.} &\displaystyle{ \div(\bsig_0) + \lambda_0 \bf^\text{r} + \bf^\text{f} = 0 }& \\
& \displaystyle{ \div(\bsig_j) + \lambda_j \bf^\text{r}  = 0 } & \forall j=1,\ldots,m \\
& \displaystyle{\bsig_0\cdot\bn = \lambda_0 \bt^\text{r} + \bt^\text{f}  } & \\
& \displaystyle{\bsig_j\cdot\bn = \lambda_j\bt^\text{r}} & \forall j=1,\ldots,m\\
& \left(\displaystyle{\bsig_0 + \sum_{j=1}^m \bsig_j\zeta_j}\right) \in G(\bzeta) & \forall \bzeta\in \Uu\\
& \displaystyle{\bar\lambda \leq \lambda_0 + \sum_{j=1}^m \lambda_j\zeta_j} & \forall \bzeta\in \Uu
\end{array} \tag{\texttt{AARC}} \label{eq:AARC-LA-2}
\end{equation}
in which we removed the uncertainty from the objective function and replaced the minimization over $\bzeta$ with robust constraints. 
Note that equality constraints depending on $\bzeta$ have been re-expressed by identifying the corresponding terms of the expansion in terms of $\zeta_i$ since $\Uu$ is full dimensional.

\subsubsection{Comparison between the different approaches}
Summarizing, \eqref{eq:static-RO-LA} is the most conservative formulation yielding the smallest limit load. 
\eqref{eq:AARC-LA-2} is more flexible since it considers additional static variables $\bsig_j,\lambda_j$ for $j=1,\ldots,m$ and reduces to \eqref{eq:static-RO-LA} if we fix all $\bsig_j=0$. 
As mentioned, \eqref{eq:adjustable-LA} is less conservative than \eqref{eq:AARC-LA-2} since we allow for more general decision rules but is generally untractable. 
Finally, all of these formulations guard against all possible realizations of the uncertainty such that we have the following ordering:
\begin{equation}
\lambda_\text{RC} \leq \lambda_\text{AARC} \leq \lambda_\text{ARC} \leq \lambda^+(\bzeta) \quad \forall\bzeta\in\Uu
\end{equation}

\section{Tractable robust counterparts for strength uncertainties}\label{sec:RC-strength-uncertainty}

One major challenge of robust optimization is to obtain a tractable formulation of the robust problem. 
For instance, problem \eqref{eq:adjustable-LA} is almost never computationally tractable whereas \eqref{eq:static-RO-LA} may be tractable if $G_\text{RC}$ can be expressed in a simple fashion. 
Tractability strongly depends on the "shape" of the uncertainty set $\Uu$ and on how the uncertain constraints depend on $\bzeta$. 
For instance, robust linear programs are LP if the uncertainty set is polyhedral and SOCP if the uncertainty set is ellipsoidal. 
A robust second-order cone program  is SDP if the uncertainty is ellipsoidal. 
In complex cases where no tractable reformulations are known, one can devise tractable \textit{approximations} (which might be tight or not) to obtain an estimate, sometimes a safe one, of the robust solution.\\

In this section, we therefore discuss various tractable formulations of the static robust counterparts derived in section \ref{sec:static-RC-formulation} depending on different hypotheses made on the form of the uncertain strength properties, more precisely on how $G(\bzeta)$ may depend on $\bzeta$ and on the choice of the uncertainty set $\Uu$.

\subsection{Modeling uncertainties with uncertainty sets}

\subsubsection{Uncertainty on physical parameters}\label{sec:physical-uncertainty}
\todo{change ref}
The previous uncertainty sets have been defined for non-dimensional uncertain parameters $\bzeta$ with maximum absolute value of 1.
 Modeling uncertainty of physical parameters $\bk \in \RR^p$ using such sets $\Uu$ can be achieved through simple affine mappings of the form:
\begin{equation}
\bk(\bzeta) = \bk_0 + \bK\bzeta \quad \text{for } \bzeta \in \Uu \label{eq:physical-uncertainty}
\end{equation} 
where $\bk_0$ is the parameters nominal value and $\bK\in \RR^{p\times m}$ a matrix characterizing the physical amplitude of the parameters with respect to $\bzeta$.
 For instance, if $p=m$, $\bK = \diag(\Delta \bk)$ and $\Uu=\Uu_\infty$, uncertain parameters will vary in the following box region $\bk(\bzeta)\in [\bk_0-\Delta \bk; \bk_0+\Delta \bk]$.\\

In some cases, information on the cross-correlation between uncertain parameters might be available through a covariance matrix $\mathbf{Cov}$.
 Letting $\bK$ be the Cholesky factor of the covariance matrix ($\mathbf{Cov}=\bK\T\bK$), the use of $\Uu_2$ results in an ellipsoid domain with principal axis oriented along the direction of principal correlations.
  Similarly, the use of $\Uu_\infty$ (resp. $\Uu_1$) results in a skewed polytope domain circumscribed (resp. inscribed) to the corresponding ellipsoid.\\

\begin{Remark}
Note that robust optimization theory models uncertain parameters with bounded supports.
 This is a strong modeling choice which seems to exclude stochastic parameters following unbounded probability distributions (e.g. normal or log-normal).
  However, it is possible to devise probabilistic guarantees for which the satisfaction of a robust constraint ensures satisfaction of the stochastic constraint with sufficiently low probability.
   Typically, the "radius" of the uncertainty set can be scaled to satisfy the stochastic constraint with a sufficient confidence level.
    This aspect is however outside the scope of the present contribution.
\end{Remark}

\subsection{Homothetic strength uncertainty}\label{sec:homothetic-uncertainty}
Let us first suppose the case of a so-called \textit{homothetic strength uncertainty}, that is, the case where the strength uncertainty can be described by an uncertain scaling factor $\beta(\bzeta)$ while the strength criterion retains its nominal shape $G_0=G(\bzeta=0)$:
\begin{equation}
G(\bzeta) = (1-\beta(\bzeta))G_0 \label{eq:homothetic-uncertainty}
\end{equation}
where we assume that $\beta(\bzeta)=0$ for $\bzeta=0$.
 Note that, as mentioned earlier, since $G$ may also potentially depend on the space location, so does $\beta$.

In the following, it will be easier to describe the strength condition $\bsig\in G$ via the \textit{gauge function} (or \textit{Minkowski functional}) $g_G(\bsig)$ of the convex set $G$ defined as follows:
\begin{equation}
g_G(\bsig) = \inf \:\{\alpha\st \alpha > 0 \text{ and } \alpha\bsig \in G\}
\end{equation}
so that:
\begin{equation}
\bsig \in G \: \Longleftrightarrow \: g_G(\bsig)\leq 1
\end{equation}

For instance, for the homothetic strength uncertainty \eqref{eq:homothetic-uncertainty}, we have:
\begin{equation}
g_{G(\bzeta)}(\bsig) = \dfrac{g_{G_0}(\bsig)}{1-\beta(\bzeta)}
\end{equation}
The robust strength condition \eqref{eq:robust-strength-condition} can therefore be equivalently rewritten as:
\begin{align}
g_{G_0}(\bsig) &\leq 1-\beta(\bzeta) \quad \forall \bzeta\in\Uu \notag\\
g_{G_0}(\bsig) &\leq \min_{\bzeta\in \Uu} \{1-\beta(\bzeta)\} & \\
g_{G_0}(\bsig) &\leq 1-\max_{\bzeta\in \Uu} \beta(\bzeta) & \notag
\end{align}
Introducing $\bar{\beta}:= \max_{\bzeta\in \Uu} \beta(\bzeta)$, we see that $G_\text{RC}$ is then:
\begin{equation}
G_\text{RC} = (1-\bar{\beta})G_0 \label{eq:homothetic-RC}
\end{equation}
We can see that, even for this very simple case, obtaining the robust counterpart involves maximizing function $\beta$ over a convex set $\Uu$ which might not always be easy to do if $\beta$ and/or $\Uu$ have a complex structure.\\

In the case where $\beta(\bzeta)=\bb\T\bzeta$ is a linear function, we have 
\begin{equation}
\bar{\beta}=\max_{\bzeta\in \Uu} \bb\T\bzeta = \pi_\Uu(\bb) = \|\bb\|_*
\end{equation} 
where $\pi_\Uu$ is the support function of the uncertainty set $\Uu$ which is given by the corresponding dual norm $\|\cdot\|_*$ when uncertainty sets are unit balls of a given norm $\|\cdot\|$ (see \ref{app:dual-norms} for more details).

Finally, if $\beta(\bzeta) = f(\bb\T\bzeta)$ where $f$ is an increasing function, then we have:
\begin{equation}
\bar{\beta}=\max_{\bzeta\in \Uu} f(\bb\T\bzeta) = f(\|\bb\|_*)
\end{equation} 
Using these results, we obtain formulations quite similar to those from \citet{tran2018shakedown,tran2020direct,tran2021direct}.

\begin{Remark}
Note that the results obtained in this section can be generalized to the case where $G(\bzeta)$ is described as follows:
\begin{equation}
\bsig \in G(\bzeta) \: \Longleftrightarrow \: g_i(\bsig)\leq 1-b_i(\bzeta) \quad \forall i=1,\ldots, p
\end{equation} 
i.e. as the intersection of $p$ convex sets, each of them being associated with a gauge function $g_i$ and subjected to a homothetic uncertainty through a function $b_i(\bzeta)$. 
Note that, if the $b_i(\bzeta)$ are different, the uncertainty does not act in an homothetic manner on the whole intersection $G$.\\
The previous results can then be applied to each individual constraint independently of each other.
\end{Remark}

\subsection{Eroded strength conditions}
We now turn to a more generic type of uncertain strength conditions than the previous homothetic case, namely uncertain strength conditions of the following type:
\begin{equation}
g_G(\bsig + \bsig_\zeta) \leq 1,\quad \forall \bsig_\zeta \in \Uu_\Sigma
\label{eq:uncertain-effective-condition}
\end{equation}
where $\Uu_\Sigma$ is again a convex uncertainty set (here in the stress space).
 The robust counterpart of \eqref{eq:uncertain-effective-condition} defines a modified\footnote{Usually \textit{smaller} if $0\in\Uu_\Sigma$} strength domain which can be defined as the \textit{erosion} of $G$ by $\Uu_\Sigma$ using the erosion operator $\ominus$ appearing in morphological image processing \cite{serra1982image}.

\begin{definition}
The \textit{eroded strength domain} of $G$ with respect to the uncertainty set $\Uu_\Sigma$ is defined as:
\begin{equation}
G\ominus\Uu_\Sigma = \bigcap_{\bsig_\zeta \in \Uu_{\Sigma}} \{\hat{\bsig} - \bsig_\zeta \st \hat\bsig\in G\}
\label{eq:robust-effective-domain}
\end{equation}
\end{definition}
We have the following equivalent characterizations of the eroded strength domain:
\begin{proposition}
The following statements are equivalent:
\begin{itemize}
\item $\bsig \in G\ominus \Uu_\Sigma$
\item $g_G(\bsig + \bsig_\zeta) \leq 1,\quad \forall \bsig_\zeta \in \Uu_\Sigma$
\item $g_{G\ominus\Uu_\Sigma}(\bsig) = \sup_{\bsig_\zeta \in \Uu_\Sigma} g_G(\bsig+\bsig_\zeta)$
\end{itemize}
\end{proposition}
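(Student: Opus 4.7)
The plan is to establish the three equivalences by unfolding the definitions of the Minkowski erosion and of the gauge function, with the core step being a change of variable that converts the intersection defining $G\ominus\Uu_\Sigma$ into a pointwise condition on translates of $\bsig$. I would prove $(1)\Leftrightarrow(2)$ first by direct computation and then obtain $(3)$ from $(2)$ by consolidating the resulting family of inequalities into a supremum.

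For $(1)\Leftrightarrow(2)$, I would first rewrite the definition \eqref{eq:robust-effective-domain} as $G\ominus\Uu_\Sigma = \bigcap_{\bsig_\zeta\in\Uu_\Sigma}(G-\bsig_\zeta)$, by the substitution $\hat\bsig=\bsig+\bsig_\zeta$ inside the set. Membership $\bsig\in G\ominus\Uu_\Sigma$ is then equivalent to requiring $\bsig+\bsig_\zeta\in G$ for every $\bsig_\zeta\in\Uu_\Sigma$. The gauge characterization $\bsig'\in G\Leftrightarrow g_G(\bsig')\leq 1$, valid because $G$ is closed convex with $0\in G$, immediately converts this into $g_G(\bsig+\bsig_\zeta)\leq 1$ for every $\bsig_\zeta\in\Uu_\Sigma$, which is exactly $(2)$.

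For $(2)\Leftrightarrow(3)$, I would collapse the family of inequalities in $(2)$ into the single condition $\sup_{\bsig_\zeta\in\Uu_\Sigma}g_G(\bsig+\bsig_\zeta)\leq 1$, and combine this with the gauge characterization applied to the convex eroded set itself, namely $\bsig\in G\ominus\Uu_\Sigma\Leftrightarrow g_{G\ominus\Uu_\Sigma}(\bsig)\leq 1$. This shows at once that the two functions on either side of $(3)$ share the same unit sublevel set $G\ominus\Uu_\Sigma$. The step I expect to be the main obstacle is promoting this level-set agreement to the pointwise identity in $(3)$: the Minkowski gauge $g_{G\ominus\Uu_\Sigma}$ is positively homogeneous of degree one in $\bsig$, whereas $\bsig\mapsto\sup_{\bsig_\zeta\in\Uu_\Sigma}g_G(\bsig+\bsig_\zeta)$ is not in general homogeneous unless $\Uu_\Sigma$ is a cone, because the additive translation by $\bsig_\zeta$ inside $g_G$ breaks the scaling. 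I would therefore interpret the equality in $(3)$ as the convenient characterization of $g_{G\ominus\Uu_\Sigma}$ through its unit sublevel set, which is precisely what is needed to reformulate the uncertain strength condition \eqref{eq:uncertain-effective-condition} as the deterministic membership $\bsig\in G\ominus\Uu_\Sigma$ in the subsequent robust limit analysis formulations.
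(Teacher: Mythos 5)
The paper states this proposition without any proof, so there is no argument of record to compare against; your reconstruction is the natural one. Your treatment of the first two bullets is exactly right: rewriting \eqref{eq:robust-effective-domain} as $\bigcap_{\bsig_\zeta\in\Uu_\Sigma}(G-\bsig_\zeta)$ via the substitution $\hat\bsig=\bsig+\bsig_\zeta$ and then applying the gauge characterization of membership in $G$ gives $(1)\Leftrightarrow(2)$ in two lines.

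Your hesitation about the third bullet is well founded, and you should push it one step further: read literally as a pointwise identity of functions of $\bsig$, the third bullet is false, not merely hard to prove. Take $G=[-1,1]\subset\RR$, so $g_G(\sigma)=|\sigma|$, and $\Uu_\Sigma=[-a,a]$ with $0<a<1$. Then $G\ominus\Uu_\Sigma=[-1+a,\,1-a]$ and $g_{G\ominus\Uu_\Sigma}(\sigma)=|\sigma|/(1-a)$, whereas $\sup_{|\sigma_\zeta|\leq a}|\sigma+\sigma_\zeta|=|\sigma|+a$; these two functions agree only where both equal $1$, i.e.\ on the boundary of the eroded set, and already differ at $\sigma=0$. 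The left-hand side is positively homogeneous while the right-hand side is not, exactly as you observe. What is true, and what the rest of the paper actually uses, is that the two functions share the same unit sublevel set: $\bsig\in G\ominus\Uu_\Sigma$ if and only if $\sup_{\bsig_\zeta\in\Uu_\Sigma}g_G(\bsig+\bsig_\zeta)\leq 1$. Your decision to interpret the third bullet as this level-set statement is the correct repair, and your proof of that weaker (and correct) claim is complete.
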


\begin{figure}
\begin{center}
\includegraphics[width=0.5\textwidth]{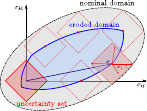}
\end{center}
\caption{Eroded strength domain $G\ominus\Uu$ (in blue) of the nominal domain $G$ (in black) with respect to the uncertainty set $\Uu_\Sigma$ (in red).}
\end{figure}

\begin{Remark}
Such eroded strength conditions can be related to yield criteria expressed in terms of effective stresses for porous materials \citep{coussy2004poromechanics}.
 For instance, in the case of Terzaghi's effective stress $\bsig_\text{eff} = \bsig - p\bI$, yield conditions are expressed on the effective stress: $\bsig_\text{eff}\in G$.
 If the pressure $p$ is known without any uncertainty, the corresponding strength domain in terms of the total stress $\bsig$ is given by the translation of $G$ by $p$ along the hydrostatic axis, that is the eroded domain $G\ominus\{p\bI\}$.
 However, if $p$ is known only within a given interval, say $p\in[p_\text{min};p_\text{max}]$, then the eroded domain is given by $G\ominus\{p\bI \st p\in[p_\text{min};p_\text{max}]\}$, that is as the intersection of $G$ translated by $p_\text{min}$ and $G$ translated by $p_\text{max}$ along the hydrostatic axis.
\end{Remark}

\subsection{Generic uncertain strength conditions and tractable approximations}\label{sec:generic-robust-strength-constraint}
Combining uncertain constraints of the form \eqref{eq:uncertain-effective-condition} and homothetic uncertainty with a linear function $\beta(\bzeta)=\bb\T\bzeta$, we now consider generic uncertain strength conditions of the form:
\begin{equation}
g_G(\bsig + \bsig_\zeta) \leq 1-\bb\T\bzeta,\quad \forall \bsig_\zeta \in \Uu_\Sigma, \forall\bzeta\in\Uu
\end{equation}
which reduces to a homothetic strength uncertainty if $\Uu_\Sigma=\{0\}$.
 Such kind of constraint appear frequently in the subsequent sections and therefore deserves some attention.
 To simplify notations and without loss of generality, we merge both types of uncertainties into a single uncertainty set $\Uu$ and consider instead the following type of uncertain conditions:
\begin{equation}
g(\bsig + \bSig\bzeta) \leq 1-\bb\T\bzeta, \quad \forall \bzeta \in \Uu \label{eq:generic-robust-strength-constraint}
\end{equation}
with $\bsig\in \RR^d, \bSig \in \RR^{d\times m}$, $d$ being the dimension of the stress space.\\

Unfortunately, an exact tractable reformulation of this constraint is not available in the general case since it involves maximizing the convex function $\bzeta \mapsto g(\bsig + \bSig\bzeta)$ over $\Uu$ which is very hard in practice.
 We now discuss cases where such an exact reformulation exists and provide approximations for the general case.
 
\subsubsection{Exact reformulation when $G$ is polyhedral}
In the specific case where $G$ is polyhedral, \eqref{eq:generic-robust-strength-constraint} admits an exact tractable reformulation. 
Indeed, let us consider $G$ to be a polyhedron described by the following set of $K$ linear inequalities:
\begin{equation}
G = \{\bsig \in \RR^d \st \bc_k\T\bsig \leq d_k, \: \forall k=1,\ldots,K\} 
\end{equation}

In this case, the robust strength condition \eqref{eq:generic-robust-strength-constraint} reads:
\begin{equation}
\bc_k\T(\bsig + \bSig\bzeta) \leq (1-\bb\T\bzeta)d_k \quad \forall \bzeta \in \Uu, \forall k=1,\ldots,K
\end{equation}
Since uncertainty is constraint-wise in robust optimization, we can consider only a single constraint of index $k$ and derive its corresponding worst-case formulation:
\begin{align}
& \bc_k\T\bsig + (\bc_k\T\bSig + d_k\bb\T)\bzeta \leq d_k \quad \forall \bzeta \in \Uu \\
\Leftrightarrow\quad & \bc_k\T\bsig + \max_{\bzeta \in \Uu} \{(\bc_k\T\bSig + d_k\bb\T)\bzeta\} \leq d_k
\end{align}
which, if $\cU$ is a centered ball for norm $\|\cdot\|$, reads $\bc_k\T\bsig + \|\bSig\T\bc_k + d_k\bb\|_* \leq d_k$ which is again tractable.

\begin{Remark}
Although the hypothesis of a polyhedral strength condition seems restrictive, there is still a large range of applications for limit analysis problems involving polyhedral strength conditions, including rigid block mechanics with Coulomb friction in 2D \citep{gilbert1994rigid}, some homogenized masonry strength criteria \citep{de1997homogenization}, yield line method for plates \citep{johansen1962yield}, linearizations of non-linear criteria \citep{pastor2004ductile}, etc.
\end{Remark}

\subsubsection{Exact reformulation when $\Uu$ is polyhedral}
As mentioned before, maximizing $g(\bsig + \bSig\bzeta)$ over $\Uu$ is very hard in the general case.
 However, when $\Uu$ is polyhedral, and $g$ convex, we know that the maximum is attained on one of the vertices of $\Uu$. As a result, we have the corresponding vertex-based reformulation:
\begin{theorem}[Vertex-based reformulation with polyhedral uncertainty]\label{th:vertex-reformulation}
If $\Uu$ is a polyhedron, i.e. the convex hull of vertices $\bzeta^k$, $k=1,\ldots,N$, then \eqref{eq:generic-robust-strength-constraint} is equivalent to:
\begin{equation}
\max_{k=1,\ldots,N} \:\{g(\bsig + \bSig\bzeta^k) + \bb\T\bzeta^k\} \leq 1 \label{eq:vertex-reformulation}
\end{equation}
\end{theorem}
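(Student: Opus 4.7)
The plan is to rewrite the robust constraint \eqref{eq:generic-robust-strength-constraint} as a single scalar inequality involving the supremum over $\Uu$, and then invoke the classical fact that a convex function on a polytope attains its maximum at a vertex.

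First, I would move the linear term to the left-hand side and introduce
\begin{equation*}
h(\bzeta) \;:=\; g(\bsig + \bSig\bzeta) + \bb\T\bzeta,
\end{equation*}
so that \eqref{eq:generic-robust-strength-constraint} is equivalent to $\sup_{\bzeta \in \Uu} h(\bzeta) \leq 1$. The crucial observation is that $h$ is convex in $\bzeta$: the gauge function $g$ of the convex set $G$ (which contains the origin) is convex and positively homogeneous; composition with the affine map $\bzeta \mapsto \bsig + \bSig\bzeta$ preserves convexity; and adding the linear term $\bb\T\bzeta$ preserves it as well.

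Second, I would apply the maximum principle for convex functions on a polytope. Since $\Uu = \operatorname{conv}\{\bzeta^1,\ldots,\bzeta^N\}$ is compact and convex with finitely many vertices, every $\bzeta \in \Uu$ can be written as $\bzeta = \sum_{k=1}^N \alpha_k \bzeta^k$ with $\alpha_k \geq 0$ and $\sum_k \alpha_k = 1$. Jensen's inequality then yields
\begin{equation*}
h(\bzeta) \;\leq\; \sum_{k=1}^N \alpha_k\, h(\bzeta^k) \;\leq\; \max_{k=1,\ldots,N} h(\bzeta^k),
\end{equation*}
which shows $\sup_{\bzeta \in \Uu} h(\bzeta) = \max_k h(\bzeta^k)$. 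Hence if the vertex inequality \eqref{eq:vertex-reformulation} holds, the original robust constraint holds for every $\bzeta \in \Uu$. The converse implication is immediate because each vertex $\bzeta^k$ belongs to $\Uu$, so satisfaction at every point of $\Uu$ certainly implies satisfaction at the vertices.

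There is no real obstacle: the proof is a direct application of convexity. The only subtlety worth flagging is that $g$ may take the value $+\infty$ when $\bsig + \bSig\bzeta$ falls outside the recession cone of $G$; Jensen's inequality remains valid in the extended real-valued sense, so the conclusion is unaffected. No assumption beyond convexity of $G$ (containing $0$) and the polyhedrality and boundedness of $\Uu$ is used.
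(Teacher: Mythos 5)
Your proof is correct and follows essentially the same route the paper sketches: rewrite the robust constraint as $\sup_{\bzeta\in\Uu}\{g(\bsig+\bSig\bzeta)+\bb\T\bzeta\}\leq 1$ and invoke the fact that a convex function over a polytope attains its maximum at a vertex, which you justify via Jensen's inequality on a convex combination of the vertices. The paper states this fact without elaboration, so your expansion (including the remark about extended-real-valued $g$) is a faithful and slightly more careful version of the same argument.
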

Unfortunately, the number $N$ of vertices of a polyhedron may grow exponentially with the uncertainty set dimension so that \eqref{eq:vertex-reformulation} becomes computationally burdensome with increasing uncertainty dimension.

\subsubsection{Approximate reformulations}
In the general case ($G$, $\Uu$ not polyhedral, large uncertainty dimension, etc.), we have to resort to tractable approximations instead of exact reformulations.
 A few safe approximations are available in the robust optimization literature depending on the level of hypothesis on $g$ or $\Uu$.
  We use essentially two of them, proposed by \citet{bertsimas2006tractable} and \citet{roos2018approximation}.

\begin{theorem}[Bertsimas and Sim approximation]\label{th:Bertsimas-approximation}
If $\Uu$ is an uncertainty set defined by an absolute norm (e.g. those of section \ref{sec:uncertainty-sets}), 
 the robust constraint \eqref{eq:generic-robust-strength-constraint} can be safely approximated as follows \citep{bertsimas2006tractable}:
\begin{equation}
g(\bsig) + \|\bs\|_* \leq 1 \label{eq:criterion-BS}
\end{equation}
where for $j=1,\ldots,m$:
\begin{equation}
s_j=\max\{g(\bSig_j)+b_j,g(-\bSig_j)-b_j\} \label{eq:BS-s-definition}
\end{equation}
with $\bSig_j$ denoting the $j$-th column of $\bSig$.
\end{theorem}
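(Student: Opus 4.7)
The plan is to show directly that whenever $g(\bsig) + \|\bs\|_* \leq 1$, the uncertain constraint \eqref{eq:generic-robust-strength-constraint} is satisfied for every $\bzeta\in\Uu$, i.e.\ that the approximated constraint is a \emph{safe} one. The two structural ingredients I intend to exploit are: (i) $g$, as the gauge function of a convex set containing the origin, is sublinear (convex, positively homogeneous, nonnegative, hence subadditive); and (ii) an absolute norm satisfies $\|\bzeta\| = \||\bzeta|\|$ and induces a dual norm $\|\cdot\|_*$ compatible with Hölder's inequality on nonnegative vectors.

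First, I would use subadditivity of $g$ to split
$$g(\bsig + \bSig\bzeta) \;=\; g\Big(\bsig + \sum_{j=1}^m \zeta_j\,\bSig_j\Big) \;\leq\; g(\bsig) + \sum_{j=1}^m g(\zeta_j\,\bSig_j).$$
A case split on the sign of $\zeta_j$, combined with positive homogeneity, gives $g(\zeta_j\bSig_j) = |\zeta_j|\,g(\bSig_j)$ if $\zeta_j\geq 0$ and $g(\zeta_j\bSig_j) = |\zeta_j|\,g(-\bSig_j)$ otherwise. Adding $b_j\zeta_j = \pm|\zeta_j|b_j$ on each side and taking the worst case produces
$$g(\zeta_j\bSig_j) + b_j\zeta_j \;\leq\; |\zeta_j|\,\max\{g(\bSig_j)+b_j,\; g(-\bSig_j)-b_j\} \;=\; s_j\,|\zeta_j|.$$
Summing over $j$ and carrying $g(\bsig)$ yields the intermediate bound $g(\bsig + \bSig\bzeta) + \bb\T\bzeta \leq g(\bsig) + \bs\T|\bzeta|$, valid for \emph{every} $\bzeta\in\mathbb{R}^m$.

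The final step bounds $\bs\T|\bzeta|$ uniformly on $\Uu$. Observing that $(g(\bSig_j)+b_j)+(g(-\bSig_j)-b_j)= g(\bSig_j)+g(-\bSig_j) \geq 0$, so at least one summand in the definition of $s_j$ is nonnegative, one obtains $s_j \geq 0$ componentwise. Then the duality inequality $\bs\T|\bzeta| \leq \|\bs\|_*\,\||\bzeta|\|$ applies, and because $\|\cdot\|$ is absolute one has $\||\bzeta|\| = \|\bzeta\| \leq 1$ on $\Uu$. Chaining everything gives $g(\bsig+\bSig\bzeta) + \bb\T\bzeta \leq g(\bsig) + \|\bs\|_*$ for all $\bzeta\in\Uu$, so \eqref{eq:criterion-BS} implies \eqref{eq:generic-robust-strength-constraint} as required.

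The only delicate point, rather than a genuine obstacle, is the sign bookkeeping in the per-coordinate bound, which is precisely what forces the two-sided maximum in the definition \eqref{eq:BS-s-definition} of $s_j$; once this is correctly handled, the rest follows from classical properties of gauges and dual norms. The approximation is in general strict because the subadditive splitting over $j$ treats each coordinate in isolation and discards the correlations that the uncertainty set $\Uu$ may impose on the components of $\bzeta$.
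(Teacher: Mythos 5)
Your proof is correct. Note, however, that the paper does not actually prove \thref{th:Bertsimas-approximation} in the main text: it states the result with a citation to Bertsimas and Sim, and the only related derivation appears in the appendix subsection on tightness, where \eqref{eq:criterion-BS} is recovered from the Roos et al.\ approximation of \thref{th:Roos-approximation} by setting $\bW=0$ --- an argument that establishes safety only for the box and cross-polytope sets, and only as a corollary of the heavier LP-duality and affine-decision-rule machinery behind \thref{th:Roos-approximation}. Your route is both more elementary and more general: the sublinearity splitting $g(\bsig+\bSig\bzeta)\leq g(\bsig)+\sum_j g(\zeta_j\bSig_j)$, the per-coordinate sign analysis giving $g(\zeta_j\bSig_j)+b_j\zeta_j\leq s_j|\zeta_j|$ (which is exactly where the two-sided maximum in \eqref{eq:BS-s-definition} is forced), the observation that $s_j\geq 0$, and the dual-norm bound $\bs\T|\bzeta|\leq\|\bs\|_*\,\||\bzeta|\|=\|\bs\|_*\,\|\bzeta\|\leq\|\bs\|_*$ together yield a self-contained safety proof valid for any absolute-norm uncertainty set, i.e.\ the full generality claimed by the theorem, including the ellipsoidal and budget cases that the $\bW=0$ specialization does not reach. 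What you do not obtain, and the appendix route does, is the quantitative statement that the Roos et al.\ approximation dominates Bertsimas--Sim. One cosmetic remark: the H\"older step $\bs\T|\bzeta|\leq\|\bs\|_*\,\||\bzeta|\|$ holds for the dual pairing of any norm without restricting to nonnegative vectors; the absoluteness of $\|\cdot\|$ is needed only to identify $\||\bzeta|\|$ with $\|\bzeta\|$.
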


\begin{Remark}
Note that if $g$ is symmetric with respect to the origin, \eqref{eq:BS-s-definition} simplifies to:
\begin{equation}
s_j = g(\bSig_j) + |b_j|
\end{equation}
\end{Remark}

This approximation is extremely versatile since it relies only on very general assumptions on $g$ and $\Uu$.
 Unfortunately, it might not always be tight enough in certain cases.\\

A tighter approximation than \eqref{eq:criterion-BS} can be obtained from \citet[Th.1]{roos2018approximation} in the specific case where the uncertainty set is \textit{polyhedral}.
Details of the derivation are given in \ref{app:Roos-approximation}.
We report here only the simplified form of the corresponding approximation obtained for the box and cross-polytope uncertainty sets.

\begin{proposition}[Roos et al. approximation - box uncertainty]\label{prop:roos-box}
If $\Uu=\Uu_\infty$, the robust constraint \eqref{eq:generic-robust-strength-constraint} can be conservatively approximated using \thref{th:Roos-approximation} as follows:
\begin{equation}
\exists \bw\in\mathbb{R}^m,\: \bW\in \mathbb{R}^{d\times m} \st
\begin{cases} \displaystyle{\sum_{j=1}^m w_j + g\left(\bsig - \sum_{j=1}^m \bW_j\right) \leq 1} \\
 g(\bW_j - \bSig_j) -b_j \leq w_j \quad \forall j=1,\ldots,m\\
 g(\bW_j + \bSig_j) + b_j \leq w_j
\end{cases}
\end{equation}\label{eq:Roos-approx-box}
\end{proposition}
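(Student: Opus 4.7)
The plan is to give a direct verification that the listed conditions are sufficient for the robust constraint \eqref{eq:generic-robust-strength-constraint} to hold for every $\bzeta \in \Uu_\infty$, without necessarily deriving them from the general Roos et al.\ theorem referenced in the appendix. The key structural fact I would exploit is that $g$, being a gauge of a convex set containing the origin, is \emph{sublinear}: positively homogeneous and subadditive. The auxiliary variables $\bW_j$ and $w_j$ should be thought of as a "per-coordinate splitting" of the uncertain part of the constraint, with $\bsig - \sum_j \bW_j$ absorbing the deterministic residual.

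First I would handle the uncertainty coordinate by coordinate. For fixed $\bW_j$, the scalar map
\begin{equation}
\varphi_j(\zeta_j) := g(\bW_j + \zeta_j \bSig_j) + b_j \zeta_j
\end{equation}
is convex in $\zeta_j$ (composition of $g$ with an affine function, plus a linear term), so its maximum on $[-1,1]$ is attained at an endpoint. The two inequalities $g(\bW_j + \bSig_j) + b_j \leq w_j$ and $g(\bW_j - \bSig_j) - b_j \leq w_j$ precisely say $\varphi_j(\pm 1) \leq w_j$, hence $\varphi_j(\zeta_j) \leq w_j$ for all $\zeta_j \in [-1,1]$.

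Next I would perform the telescoping decomposition
\begin{equation}
\bsig + \bSig \bzeta = \Bigl(\bsig - \sum_{j=1}^m \bW_j\Bigr) + \sum_{j=1}^m (\bW_j + \zeta_j \bSig_j),
\end{equation}
and apply subadditivity and positive homogeneity of $g$ to obtain $g(\bsig + \bSig \bzeta) \leq g\bigl(\bsig - \sum_j \bW_j\bigr) + \sum_j g(\bW_j + \zeta_j \bSig_j)$. Adding $\bb\T \bzeta = \sum_j b_j \zeta_j$ to both sides and using the per-coordinate bound from the first step yields
\begin{equation}
g(\bsig + \bSig\bzeta) + \bb\T\bzeta \;\leq\; g\Bigl(\bsig - \sum_{j=1}^m \bW_j\Bigr) + \sum_{j=1}^m w_j,
\end{equation}
and the first listed inequality then forces the right-hand side to be $\leq 1$, uniformly in $\bzeta \in \Uu_\infty$. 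This is exactly the form of \eqref{eq:generic-robust-strength-constraint}, showing the proposed system is a safe (i.e.\ sufficient) approximation.

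The step I expect to be the main obstacle is motivating the decomposition, in particular why the combined variables $(\bW_j, w_j)$ are the right ansatz rather than simply bounding $g(\bsig + \bSig\bzeta)$ directly; the natural explanation is that $(\bW_j, w_j)$ arise as dual multipliers for the $2m$ facet inequalities $\pm \zeta_j \leq 1$ defining $\Uu_\infty$ in the general Roos et al.\ framework (Theorem referenced in \ref{app:Roos-approximation}), and one then specializes the resulting SDP-style reformulation to a polyhedral uncertainty set with this particularly simple facet structure. Aside from identifying this decomposition, the remaining verification is a short application of convexity of $\varphi_j$ and sublinearity of the gauge $g$.
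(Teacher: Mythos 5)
Your proof is correct, but it takes a genuinely different route from the paper's. The paper obtains the proposition as a corollary of the general polyhedral result \thref{th:Roos-approximation}: it instantiates the description of $\Uu_\infty$ with $\bD_1 = [\mathbf{I}_m;\,-\mathbf{I}_m]$, $\bD_2=0$ and $\bd$ the all-ones vector, which produces a system in auxiliary variables $\bv^{\pm},\bV^{\pm}$, and then eliminates the equality constraints via the change of variables $\bW=\bV^{+}+\bV^{-}$, $\bw=\bv^{+}+\bv^{-}$ (so that $\bV^{\pm}=(\bW\mp\bSig)/2$ and $v^{\pm}_j=(w_j\pm b_j)/2$), finishing with the homogeneity of $g$. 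You instead verify safety directly: convexity of $\zeta_j\mapsto g(\bW_j+\zeta_j\bSig_j)+b_j\zeta_j$ reduces the per-coordinate bound on $[-1,1]$ to the two endpoints $\zeta_j=\pm 1$, which are exactly the last two listed inequalities, and sublinearity of the gauge applied to the telescoping split $\bsig+\bSig\bzeta=(\bsig-\sum_j\bW_j)+\sum_j(\bW_j+\zeta_j\bSig_j)$ then gives the uniform bound. Both arguments are sound. Your version is more elementary and self-contained --- it needs only that $g$ is positively homogeneous and subadditive --- and it isolates precisely where the conservatism enters, namely the single subadditivity step. What it does not establish (and does not need to, for the stated safe-approximation claim) is that this system is what \thref{th:Roos-approximation} actually yields for $\Uu_\infty$; the paper's derivation makes that identification, explains the origin of the ansatz $(\bW_j,w_j)$ as dual multipliers of the $2m$ facets $\pm\zeta_j\leq 1$ restricted to an affine decision rule (which you correctly anticipate in your closing remark), and extends uniformly to the other polyhedral sets treated in the appendix, such as the cross-polytope.
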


\begin{proposition}[Roos et al. approximation - cross-polytope uncertainty]
If $\Uu=\Uu_1$, the robust constraint \eqref{eq:generic-robust-strength-constraint} can be conservatively approximated using \thref{th:Roos-approximation} as follows:
\begin{equation}
\exists w\in\mathbb{R},\: \bW\in \mathbb{R}^{d} \st
\begin{cases} \displaystyle{w + g\left(\bsig - \bW\right) \leq 1} \\
 g(\bW - \bSig_j) -b_j \leq w \quad \forall j=1,\ldots,m\\
 g(\bW + \bSig_j) + b_j \leq w
\end{cases}
\end{equation}\label{eq:Roos-approx-cross}
\end{proposition}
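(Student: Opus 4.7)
The plan is to verify directly that the existence of $(w,\bW)$ satisfying the three-inequality system implies the robust strength condition, i.e.\ that $g(\bsig+\bSig\bzeta)+\bb\T\bzeta \leq 1$ for every $\bzeta \in \Uu_1$. This bypasses having to unpack the general statement \thref{th:Roos-approximation}, and it exploits the two standard properties of a gauge function: positive homogeneity and subadditivity, together with convexity.

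The first step is to split off the $(\bsig-\bW)$ piece using subadditivity of $g$:
\begin{equation}
g(\bsig+\bSig\bzeta) \;\leq\; g(\bsig-\bW) + g(\bW+\bSig\bzeta).
\end{equation}
The first term is already controlled by the constraint $w+g(\bsig-\bW)\leq 1$. So the entire difficulty is reduced to showing that $g(\bW+\bSig\bzeta)\leq w-\bb\T\bzeta$ for every $\bzeta \in \Uu_1$.

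For this, I would write $\bzeta=\sum_j (\zeta_j^+-\zeta_j^-)\bolde_j$ with $\zeta_j^\pm\geq 0$ and $\alpha:=\sum_j(\zeta_j^++\zeta_j^-)=\|\bzeta\|_1\leq 1$, set $\alpha_0:=1-\alpha\geq 0$, and observe the key identity
\begin{equation}
\bW+\bSig\bzeta \;=\; \alpha_0\,\bW + \sum_{j=1}^m \zeta_j^+(\bW+\bSig_j) + \sum_{j=1}^m \zeta_j^-(\bW-\bSig_j),
\end{equation}
which is a genuine convex combination since the coefficients sum to $\alpha_0+\alpha=1$. Applying convexity of $g$ and the constraints $g(\bW+\bSig_j)\leq w-b_j$ and $g(\bW-\bSig_j)\leq w+b_j$ bounds the last two sums by $w\alpha - \bb\T\bzeta$. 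The only missing ingredient is a bound on $g(\bW)$, obtained by writing $\bW=\tfrac12(\bW+\bSig_j)+\tfrac12(\bW-\bSig_j)$ for any fixed $j$ and invoking convexity once more, which yields $g(\bW)\leq w$. Altogether one gets $g(\bW+\bSig\bzeta)\leq \alpha_0 w + \alpha w - \bb\T\bzeta = w-\bb\T\bzeta$, and combining with the first step and inequality (C1) closes the proof.

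I expect the main subtlety not to lie in any individual step (each is one line once set up), but in organizing the decomposition so that the single variables $w$ and $\bW$ suffice, in contrast to the per-component $w_j,\bW_j$ of \cref{prop:roos-box}. The reason this collapse is possible is precisely that $\Uu_1$ has only $2m$ vertices sharing a single "origin" $0$ in the convex combination, whereas $\Uu_\infty$ has $2^m$ vertices and the analogous combination requires one auxiliary center per coordinate direction. This is the conceptual content of the general Roos approximation that needs to be surfaced; once noticed, the rest is a bookkeeping exercise in subadditivity and convexity of $g$.
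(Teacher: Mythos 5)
Your proof is correct, but it takes a genuinely different route from the paper. The paper obtains the system by instantiating the general \thref{th:Roos-approximation} with the explicit polyhedral description $\bD_1,\bD_2,\bd$ of $\Uu_1$ (lifted with slack variables $\bxi$), solving the resulting equality constraints to eliminate $\bV^{\pm},\bv^{\pm}$ in favour of $\bW,w$, and finally discarding the constraint $g(\bW)\leq w$ as redundant by subadditivity. You instead verify safety directly: writing $\bzeta$ as a convex combination of $0$ and the vertices $\pm\mathbf{e}_j$ of $\Uu_1$, you get $\bW+\bSig\bzeta$ as the corresponding convex combination of $\bW$ and $\bW\pm\bSig_j$, and sublinearity of the gauge plus the three constraints gives $g(\bW+\bSig\bzeta)\leq w-\bb\T\bzeta$, hence $g(\bsig+\bSig\bzeta)+\bb\T\bzeta\leq g(\bsig-\bW)+w\leq 1$; all steps check out, including the auxiliary bound $g(\bW)\leq w$, which is exactly the paper's redundancy observation run in the opposite direction. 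Your argument is more elementary and self-contained (no Fenchel conjugate, no LP duality, no linear decision rule), and it makes visible why the approximation is exact here (take $\bW=\bsig$ to recover the vertex reformulation). What it does not deliver is the claim, present in the proposition's wording, that this system is what \thref{th:Roos-approximation} actually produces for $\Uu_1$ — i.e.\ you prove the system is a safe approximation but not that it is the specialization of the general theorem, which is the paper's organizing principle and what guarantees it is at least as tight as the Bertsimas--Sim bound. Two cosmetic points: the reference ``(C1)'' is never defined (you mean the first constraint $w+g(\bsig-\bW)\leq 1$), and your bound $g(\bW)\leq w$ silently uses $m\geq 1$, which is harmless but worth a word.
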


\todo{With $\bW=\bsig$, we get $w\leq 1$ and $g(\bsig \pm \bSig_j) \pm b_j \leq 1$ which is the vertex-based formulation.
Hence Roos et al. approximation is exact in this case.
Can we explain why it is exact for this case and not for the box uncertainty ? Is there a link with the fact that affine decision rules are exact for simplices ?}

\begin{Remark}
Interestingly both approximations obtained from \thref{th:Bertsimas-approximation} and \thref{th:Roos-approximation} can be formulated using conic constraints of the same form as those involved in the definition of $g$.
The main difference with the nominal case regarding computational complexity is that the resulting robust constraint involves a finite number of instances of $g$ instead of just one e.g. $2m+1$ instances for \eqref{eq:Roos-approx-box} and \eqref{eq:Roos-approx-cross}.
\end{Remark}

\subsubsection{Illustrative example}
To illustrate the effectiveness of the previous approximate reformulations, we consider the determination of the eroded strength condition in the case of a plane stress von Mises criterion given by:
\begin{equation}
\bsig \in  G \quad \Leftrightarrow\quad g(\bsig) = \frac{1}{\sigma_0}\sqrt{\sigma_1^2+\sigma_2^2-\sigma_1\sigma_2}\leq 1
\end{equation}
where $\sigma_1,\sigma_2$ are the principal stresses and $\sigma_0$ the material uniaxial strength.

We determine the corresponding eroded strength domain:
\begin{equation}
\bsig \in  G\ominus\Uu_\Sigma \quad \Leftrightarrow\quad g(\bsig + \bsig_\zeta) \leq 1 \quad \forall \bsig_\zeta \in \Uu_\Sigma
\end{equation}
when $\Uu_\Sigma \subseteq \RR^2$ is given by, either:
\begin{equation}
\bsig_\zeta \in \Uu_\Sigma \quad \Leftrightarrow\quad |\bsig_{\zeta,1}| + |\bsig_{\zeta,2}| \leq \alpha\sigma_0
\end{equation}
or:
\begin{equation}
\bsig_\zeta \in \Uu_\Sigma \quad \Leftrightarrow\quad \max\{|\bsig_{\zeta,1}|; |\bsig_{\zeta,2}|\} \leq \alpha\sigma_0
\end{equation}
with $0<\alpha<1$. The above stress uncertainty set $\Uu_\Sigma$ corresponds to a scaled cross-polytope or box uncertainty, that is $\Uu_\Sigma = \alpha\sigma_0 \Uu_1$ or $\Uu_\Sigma = \alpha\sigma_0 \Uu_\infty$, respectively.

Figure \ref{fig:eroded-domain-example} represents the obtained eroded domains in both cases for $\alpha=0.25$.
It can clearly be seen that the approximation obtained from \thref{th:Bertsimas-approximation} is much more conservative than that obtained from \thref{th:Roos-approximation}. The latter is in fact of very good quality as it seems to be exact for the cross-polytope uncertainty (Figure \ref{fig:eroded-L1}) whereas it is very close to the exact domain obtained with the vertex-based representation of \thref{th:vertex-reformulation} in the box uncertainty case (Figure \ref{fig:eroded-Linf}).
\begin{figure}
\begin{center}
\begin{subfigure}{0.49\textwidth}
\includegraphics[width=\textwidth]{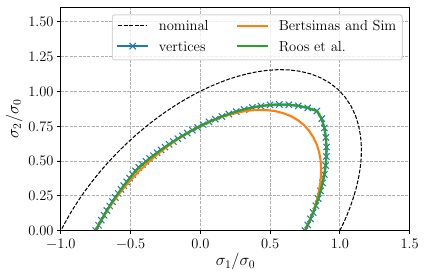}
\caption{$\Uu_\Sigma = \alpha\sigma_0 \Uu_1$}\label{fig:eroded-L1}
\end{subfigure}
\hfill
\begin{subfigure}{0.49\textwidth}
\includegraphics[width=\textwidth]{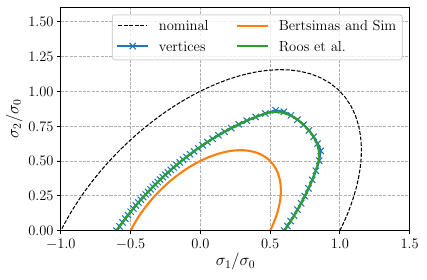}
\caption{$\Uu_\Sigma = \alpha\sigma_0 \Uu_\infty$}\label{fig:eroded-Linf}
\end{subfigure}
\end{center}
\caption{Eroded strength domains in the case $\alpha=0.25$. The exact domain is obtained using the vertex-based representation of \protect\thref{th:vertex-reformulation} ("vertices") and compared with the approximations based on \protect\thref{th:Bertsimas-approximation} ("Bertsimas and Sim") and \protect\thref{th:Roos-approximation} ("Roos et al.").}
\label{fig:eroded-domain-example}
\end{figure}

\subsection{General tractable formulations in case of small uncertainty}
Let us now depart from homothetic uncertainty assumption \eqref{eq:homothetic-uncertainty} and consider instead $\bk$ to be a set of parameters characterizing the strength criterion shape e.g. $\bk=(c,\phi)$ for a Mohr-Coulomb criterion with cohesion $c$ and internal friction angle $\phi$.
Let us assume that there exist a matrix $\bA(\bk)$ and a scalar $b(\bk)$ depending on $\bk$ and a homogeneous convex function $h$ such that the strength condition can be written as follows\footnote{Note that $\frac{1}{b} h\circ \bA$ is the gauge function $g_G$ of $G$.}:
\begin{equation}
\bsig \in G(\bk) \quad \Longleftrightarrow \quad h(\bA(\bk)\bsig)\leq b(\bk)
\end{equation}

\begin{Remark}
Note that if the strength criterion can be written using more than one constraint of the previous form, the following reformulation can be applied to all such constraints independently.\\
\end{Remark}

Let us now consider an uncertainty on the parameters of the form $\bk = \bk_0 + \bK \bzeta$ with $\bzeta\in \mathcal{U}$  as in section \ref{sec:physical-uncertainty} and assume that the uncertainty is small i.e. $\|\bK \bzeta\|\ll \|\bk_0\|$. The robust counterpart of the uncertain strength criterion then writes as:
\begin{equation}
 h(\bA(\bk_0 + \bK \bzeta)\bsig)\leq b(\bk_0 + \bK \bzeta) \quad \forall \bzeta\in \Uu
\end{equation}

This expression may not be reformulated in a tractable manner for general $\bA$ and $b$. We then propose to obtain a tractable approximation using a first-order Taylor expansion accounting for the hypothesis of small uncertainty:
\begin{align}
 h((\bA(\bk_0) + \bA'(\bk_0)\bK \bzeta)\bsig) &\leq b(\bk_0) +b'(\bk_0)\bK \bzeta \quad \forall \bzeta\in \Uu 
 \label{eq:strength-robust-approximation}
\end{align}
Let us remark that the previous approximation is in fact exact if $\bA$ and $b$ depend linearly upon $\bk$. \eqref{eq:strength-robust-approximation} is also  equivalent to:
\begin{align}
\sup_{\bzeta \in \Uu} \{h((\bA(\bk_0) + \bA'(\bk_0)\bK \bzeta)\bsig) - b(\bk_0) - b'(\bk_0)\bK \bzeta\} \leq 0
\end{align}

Unfortunately, this maximization is untractable in practice but can be conservatively using \thref{th:Bertsimas-approximation} as follows:
\begin{equation}
h(\bA(\bk_0)\bsig)-b(\bk_0) + \|\bs\|_{*} \leq 0 \label{eq:Bertsimas-approximation-physical}
\end{equation}
where $\bs=(s_j)_{j=1,\ldots,m}$ and:
\begin{align}
s_j &= \max\{h(\Delta \bA_j \bsig) - \Delta b_j,h(-\Delta \bA_j \bsig) + \Delta b_j\}\\ \label{BS-s-definition}
\Delta \bA_j &= \bA'(\bk_0)\bK_j\\
\Delta b_j &= b'(\bk_0)\bK_j
\end{align}
in which $\bK_j$ denotes the $j$-th column of $\bK$.

In case of right-hand side uncertainty only $\bA(\bk)=\bA(\bk_0)$, $\Delta\bA_j=0$ so that \eqref{eq:Bertsimas-approximation-physical} becomes:
\begin{equation}
h(\bA(\bk_0)\bsig)-b(\bk_0) + \|b'(\bk_0)\bK\|_* \leq 0
\end{equation}
which coincides with \eqref{eq:homothetic-RC} when both $b(\bk)$ and $\beta(\bzeta)$ are affine.\\

In conclusion, we propose to consider the following conservative robust counterpart $G_\text{RC}(\bk,\bK)$:
\begin{equation}
\bsig \in G_\text{RC}(\bk,\bK) \quad \Longleftrightarrow \quad h(\bA(\bk_0)\bsig) + \|\bs\|_{*} \leq b(\bk_0)
\end{equation}
which enjoys the following properties:
\begin{itemize}
\item safe approximation to the robust criterion:
\begin{equation}
\bsig \in G_\text{RC}(\bk,\bK) \quad \Longrightarrow \quad \bsig \text{ satisfies \eqref{eq:strength-robust-approximation}}
\end{equation}
\item consistency with the nominal criterion:
\begin{equation}
 G_\text{RC}(\bk,\bK=\boldsymbol{0}) = G(\bk_0)
 \end{equation}
\item similar modeling complexity as the nominal criterion. Indeed, if $G$ is SOCP or SDP-representable, then for the considered uncertainty sets of section \ref{sec:uncertainty-sets}, $G_\text{RC}$ is also  SOCP or SDP-representable.
Similarly, if $G$ is LP-representable, so is  $G_\text{RC}$ except for $\Uu=\Uu_2$ for which it is SOCP-representable.
Since the number of material parameters $|\bk|$ is usually small, the additional modeling cost for representing $G_\text{RC}$ is moderate.
\end{itemize}

\subsection{Application to a robust Mohr-Coulomb criterion}
For the sake of illustration, let us consider the case of a Mohr-Coulomb strength criterion with tension cut-off where the cohesion $c$, the friction angle $\phi$ and the tension cut-off $f_t$ are all uncertain i.e. let $\bk=(c,\phi, f_t)$ and represent their uncertainty using \eqref{eq:physical-uncertainty}.
For illustrative purposes, we assume that the tension cut-off is independent of $c$ and $\phi$.
Regarding the cohesion and friction angle, a negative correlation is often encountered between both parameters, i.e. soils with low cohesion tend to exhibit higher friction angles than with higher cohesion.
We denote by $\rho$ the correlation coefficient between $c$ and $\phi$, with typical values ranging from $-0.5$ to $-0.9$ \citep{wang2016quantifying}.
We therefore consider a "correlation" matrix $\bK$ such that:
\begin{equation}
\bK\T\bK = \begin{bmatrix}
\Delta c^2 & \rho \Delta c \Delta\phi  & 0 \\
 \rho \Delta c \Delta\phi  & \Delta\phi^2 & 0\\
 0 & 0 & \Delta f_t^2 
\end{bmatrix} \quad \text{i.e. } \bK = \begin{bmatrix}
\Delta c & \rho \Delta\phi  & 0 \\
0 & \Delta\phi\sqrt{1-\rho^2}  &  0\\
 0 & 0 & \Delta f_t 
\end{bmatrix} \label{c-phi-correlation}
\end{equation}
where $\Delta c,\Delta\phi,\Delta f_t$ are the parameters typical variations and are assumed to be positive.
Note that if such variations were taken as the standard deviations of the corresponding parameters, $\bK\T\bK$ would be the corresponding covariance matrix.\\

The robust counterpart of the Coulomb criterion therefore reads:
\begin{equation}
\sigma_{1}-\sigma_3 + (\sigma_{1}+\sigma_3)\sin\phi(\bzeta) -2c(\bzeta)\cos\phi(\bzeta) \leq 0 \qquad \forall\bzeta \in\Uu 
\end{equation}
\begin{align}
\sigma_{1}-\sigma_3 + (\sigma_{1}+\sigma_3)\sin\phi(\bzeta) \leq 2c(\bzeta)\cos\phi(\bzeta) & \qquad \forall\bzeta \in\Uu \\
\sigma_3 \leq f_t(\bzeta) \qquad \quad\quad & \qquad \forall \bzeta \in \Uu
\end{align}
where $\sigma_1$ (resp. $\sigma_3$) is the maximum (resp. minimum) principal stress and $\Uu$ is one of the typical uncertainty sets of section \ref{sec:uncertainty-sets}.\\

First, the tension cut-off part being independent of $(c,\phi)$, it can be treated as in section \ref{sec:homothetic-uncertainty}, yielding:
\begin{align}
\sigma_3 &\leq f_{t0}+\Delta f_t \zeta_3 \quad \forall \bzeta \in \Uu \\
\Leftrightarrow \quad \sigma_3 &\leq f_{t0}-\Delta f_t
\end{align}
where $f_{t0}$ is tensile strength nominal value.
As a result, the robust counterpart of the tension cut-off criterion amounts to considering the worst-case tensile strength $f_{t0}-\Delta f_t$ as one could have expected.\\

Regarding the Mohr-Coulomb part, linearization around $\bk_0=(c_0,\phi_0, f_{t0})$ results in:
\begin{align}
\sigma_{1}-\sigma_3 &+ (\sigma_{1}+\sigma_3)(\sin\phi_0 +\cos(\phi_0)K_{22}\zeta_2) \notag\\
 &-2(c_0+K_{11}\zeta_1 + K_{12}\zeta_2)\cos\phi_0 \notag \\
 &+ 2c_0\sin\phi_0K_{22}\zeta_2 \leq 0 \qquad\qquad\qquad \forall\bzeta \in\Uu 
\end{align}
with $K_{ij}$ being the components of $\bK$ defined in \eqref{c-phi-correlation}.
This yields the following robust counterpart:
\begin{equation}
\sigma_{1}-\sigma_3 + (\sigma_{1}+\sigma_3)\sin\phi_0 -2c_0\cos\phi_0 + \|\bs\|_* \leq 0 \label{eq:robust-Mohr-Coulomb}
\end{equation}
where:
\begin{equation}
\bs = \begin{pmatrix}
2\Delta c\cos\phi_0 \\
\left((\sigma_{1}+\sigma_3)\cos(\phi_0)+2c_0\sin\phi_0\right)\sqrt{1-\rho^2}\Delta\phi-2c_0\cos\phi_0\rho\Delta\phi\\
0
\end{pmatrix}
\end{equation}\\

Let us now investigate the simple case of no cross-correlation $\rho=0$ with a box uncertainty $\Uu=\Uu_\infty$.
The previous expression reduces to:
\begin{align}
\bs &= \begin{pmatrix}
2\Delta c\cos\phi_0 \\
\left((\sigma_{1}+\sigma_3)\cos(\phi_0)+2c_0\sin\phi_0\right)\Delta\phi\\
0
\end{pmatrix} \\
\|\bs\|_*=\|\bs\|_1 &= 2\Delta c\cos\phi_0 + \left|(\sigma_{1}+\sigma_3)\cos(\phi_0)+2c_0\sin\phi_0\right|\Delta\phi
\end{align}
so that the robust Mohr-Coulomb criterion \eqref{eq:robust-Mohr-Coulomb} reduces to:
\begin{align}
\sigma_{1}-\sigma_3 + (\sigma_{1}+\sigma_3)\sin\phi_0 & \notag \\
+ \left|(\sigma_{1}+\sigma_3)\cos(\phi_0)+2c_0\sin\phi_0\right|\Delta\phi &\leq 2(c_0-\Delta c)\cos\phi_0
\end{align}
which can be further expressed as follows:
\begin{equation}
\begin{cases}
\sigma_{1}-\sigma_3 + (\sigma_{1}+\sigma_3)(\sin\phi_0 + \cos(\phi_0)\Delta\phi) \leq 2c_\text{min}\cos\phi_0 - 2c_0\sin\phi_0\Delta\phi \\
\sigma_{1}-\sigma_3 + (\sigma_{1}+\sigma_3)(\sin\phi_0 - \cos(\phi_0)\Delta\phi) \leq 2c_\text{min}\cos\phi_0 + 2c_0\sin\phi_0\Delta\phi
\end{cases}
\end{equation}
where $c_\text{min}=c_0-\Delta c$ is the worst-case cohesion.
Introducing $\phi_\text{min}=\phi_0-\Delta \phi$ the worst-case friction angle and $\phi_\text{max}=\phi_0+\Delta \phi$ the best-case friction angle and using the fact that $\sin(\phi_\text{max/min}) \approx \sin\phi_0 \pm \cos(\phi_0)\Delta\phi$ and $\cos(\phi_\text{max/min}) \approx \cos\phi_0 \mp \sin(\phi_0)\Delta\phi$, the previous criterion is, in fact, a first-order approximation (in terms of $\Delta c,\Delta\phi$) to the following multi-surface criterion:
\begin{equation}
\begin{cases}
\sigma_{1}-\sigma_3 + (\sigma_{1}+\sigma_3)\sin\phi_\text{max}  \leq 2c_\text{min}\cos(\phi_\text{max})\\
\sigma_{1}-\sigma_3 + (\sigma_{1}+\sigma_3)\sin\phi_\text{min}  \leq 2c_\text{min}\cos(\phi_\text{min})
\end{cases}
\end{equation}
i.e. the obtained robust counterpart, for this specific case, (approximately) corresponds to the intersection of two Coulomb criteria with the worst-case cohesion and either the best or the worst-case friction angle. 

Figure \ref{fig:robust-MC} represents the corresponding nominal and robust version for this uncorrelated box-uncertainty case.
The yield surface corresponding to random realizations of $c(\bzeta)$ and $\phi(\bzeta)$ are also represented.
One can indeed see that the obtained robust strength criterion forms a tight lower bound to the various realizations and is made of two sets of lines approximately characterized by the minimum and maximum friction angle $\phi_\text{min}$ and $\phi_\text{max}$.

To assess the influence of the uncertainty set, the obtained robust strength criterion boundary has been represented in Figure \ref{fig:robust-Coulomb-comp-U} in Mohr's normal/shear stress plane for the case of the box $\Uu_\infty$, ellipsoid $\Uu_2$ and cross-polytope $\Uu_1$ uncertainty.
As expected, the most conservative robust criterion is obtained for the box, then follows the ellipsoid, then the cross-polytope uncertainty.
 The latter is made of 3 different planes whereas the ellipsoid robust criterion has a curved shape upon close inspection.
 Interestingly, both of them are very close to each other.
 All three criteria result in the same cohesion value for pure shear ($\tau=c_\text{min}$) and the same asymptotic slope for large compressions (approximately corresponding to a friction angle $\phi_\text{min}$).
  The corresponding tensile strength is however different for the three criteria.

Finally, the influence of the cross-correlation parameter $\rho$ on the criterion shape is also represented on Figure \ref{fig:robust-Coulomb-comp-rho}.
 Its effect is non-negligible since, between $\rho=0$ and $\rho=-0
5$, the robust tensile strength decreases by roughly 5\% and the shear strength to increase by roughly 10\%.

\begin{figure}
\begin{center}
\includegraphics[width=0.6\textwidth]{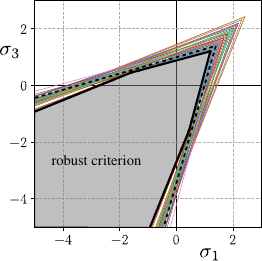}
\end{center}
\caption{Robust and uncertain Mohr-Coulomb criterion : $c_0=1$ MPa, $\phi_0=30^\circ$, $\Delta c = 150$ kPa, $\Delta \phi = 5^{\circ}$. 
Black dashed lines denote the nominal surface, thin coloured lines denote random realizations of the uncertain criterion.}\label{fig:robust-MC}
\end{figure}

\begin{figure}
\begin{center}
\includegraphics[width=0.6\textwidth]{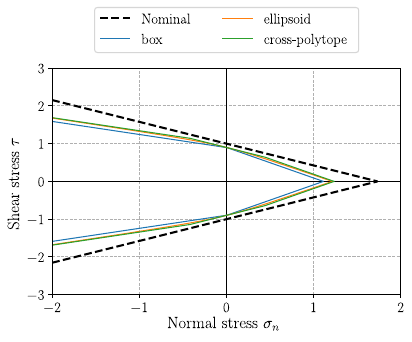}
\end{center}
\caption{Influence of the chosen uncertainty set on the obtained robust criterion.}\label{fig:robust-Coulomb-comp-U}
\end{figure}

\begin{figure}
\begin{center}
\includegraphics[width=0.6\textwidth]{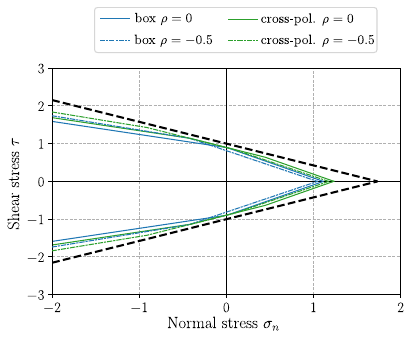}
\end{center}
\caption{Influence of the cross-correlation $\rho$ on the obtained robust criterion.}\label{fig:robust-Coulomb-comp-rho}
\end{figure}

\section{Affinely adjustable robust counterparts for strength uncertainties}\label{sec:AARC-main}
In the previous section, we have been able to derive various robust counterparts for strength uncertainties under fairly general modeling hypotheses.
 This was essentially possible since we restricted to static decision rules for the stress field.
 In our opinion, such rules are quite efficient, that is to say they are not too conservative, in situations where strength uncertainties are of small amplitude so that the resulting stress field is only mildly perturbed in presence of uncertainties.

In the following sections, we will discuss the case where such static rules may fail and derive tractable (approximate) reformulations of robust strength conditions in the AARC case discussed in section \ref{sec:AARC-formulation}. 

\subsection{Typical examples where RC formulations are too conservative}\label{sec:static-failure}

RC formulations as derived in section \ref{sec:RC-strength-uncertainty} can be typically used in situations where uncertainties on strength material properties are small e.g. spatial variability with slow amplitude or small uncertainty on experimentally measured values.
 Robust optimization can also be used to assess vulnerability of a given system with respect to the partial or total loss of some of its components.
  For instance, let us consider a framed building made of beams and columns with given nominal strengths.
  One could wonder how robust would be such a structure to the loss of, say, $\Gamma$ columns or beams.
  This problem could be tackled by resorting to a binary optimization limit analysis problem in which each beam/column strength condition, indexed by $i=1,\ldots,N$, would be weighted by a binary coefficient $z_i\in \{0,1\}$ where 0 means that the member is sound and 1 that it is damaged and such that the maximum number of damaged elements is bounded:  $\sum_{i=1}^N z_i \leq \Gamma$.
  To fix ideas and simplify the discussion, let us write such a constraint using a scalar stress $\sigma_i$ and its corresponding nominal strength $\sigma_0$ as follows:
\begin{equation}
|\sigma_i| \leq (1-\eta z_i)\sigma_0 \label{eq:component-loss}
\end{equation}
where $0\leq \eta\leq 1$ is the maximum degradation level ($\eta=1$ means complete damage).

 However, such binary optimization problems are notoriously hard to solve in practice, especially for large-scale problems.
  In their seminal paper \citet{bertsimas2004price} considered the budget uncertainty set \eqref{eq:budget-uncertainty} to account for the deviation of at most $\Gamma$ coefficients from their nominal value in an uncertain linear optimization problem.
  This linear programming approach can be seen as a relaxation of binary optimization problems such as the one we just mentioned.
  Under some particular conditions, the relaxed LP formulation can be in fact exact.
  Following this idea, we could therefore consider a continuous variant of the previous problem where we introduce:
\begin{align}
\bzeta&=(\zeta_i)_{i=1,\ldots,N}\\
\Uu^+(\Gamma)&=\left\lbrace\bzeta \st 0 \leq \zeta_i \leq 1 \: \forall i=1,\ldots, N \text{ and } \sum_{i=1}^N \zeta_i \leq \Gamma\right\rbrace
\end{align} 
and consider the same uncertain strength condition \eqref{eq:component-loss} by replacing the binary variable $z_i$ with its continuous version $\zeta_i$ with $\bzeta \in \Uu^+(\Gamma)$ where $\Uu^+(\Gamma)$ is a one-sided budget uncertainty set (see also \ref{app:dual-norms}).\\

Following \eqref{eq:static-RO-LA-1}, the worst-case load factor with a static decision rule would then be computed by finding a stress field $(\sigma_i)$ in equilibrium and such that for all $i=1,\ldots,N$:
\begin{align}
|\sigma_i| &\leq (1-\eta \zeta_i)\sigma_0 \quad \forall \bzeta\in \Uu^+(\Gamma) \\
\Leftrightarrow\quad |\sigma_i| &\leq \sigma_0 \cdot \min_{\bzeta\in\Uu^+(\Gamma)} \{1-\eta \zeta_i\}  \notag
\end{align}
However, the above minimization is given by:
\begin{equation}
 \min_{\bzeta\in\Uu^+(\Gamma)} \{1-\eta \zeta_i\} = \begin{cases} 1-\Gamma\eta & \text{if }  \Gamma \leq 1 \\
 1- \eta  & \text{if } \Gamma\geq 1 
 \end{cases}
\end{equation} 
since the objective depends only on a single $\zeta_i$ which can take the value $\min\{\Gamma;1\}$.
 Clearly, this result is way too conservative since, for $\Gamma\geq 1$, it implies that all members which can potentially fail will reach their maximum damage level $\eta$ simultaneously in the static RC counterpart.\\

The problem comes from the fact that robust optimization theory operates in a constraint-wise fashion i.e. each constraint is guarded against uncertainty independently from each other.
 Besides, no "coupling" between the constraints appears in this case since uncertain strengths are independent of each other and since we considered a static decision rule where a \textit{single} stress field $(\sigma_i)$ must be feasible for all possible uncertainty realization.
  In this case, the budget uncertainty set is treated as a box-uncertainty and the budget constraint is completely ignored as soon as $\Gamma \geq 1$.
  This is obviously too strong a requirement in practice.

\subsection{Tractable affinely adjustable robust counterparts} \label{sec:tractable-AARC}
As mentioned in section \ref{sec:AARC-formulation}, AARC formulations amount to choosing affine decision rules \eqref{eq:affine-decision-rule} for the adjustable stress and load factor variables.
 The corresponding robust counterpart \eqref{eq:AARC-LA-2} involves the following two robust constraints:
\begin{align}
\left(\displaystyle{\bsig_0 + \sum_{j=1}^m \bsig_j\zeta_j}\right) \in G(\bzeta) & \quad\forall \bzeta\in \Uu \label{eq:aarc-strength-constraint}\\
\displaystyle{\bar\lambda \leq \lambda_0 + \sum_{j=1}^m \lambda_j\zeta_j} & \quad\forall \bzeta\in \Uu
\end{align}

Introducing the vector $\bLambda = (\lambda_j)_{j=1,\ldots,m}$, the latter can be easily reformulated as follows:
\begin{align}
&\bar\lambda \leq \lambda_0 + \bLambda\T\bzeta \quad \forall\bzeta\in\Uu \notag \\
\Leftrightarrow \quad& \bar\lambda + \max_{\bzeta\in\Uu}\: \{-\bLambda\T\bzeta\} \leq \lambda_0 \label{eq:load-multiplier-reformulation}\\
\Leftrightarrow \quad & \bar\lambda + \|{-\bLambda}\|_* \leq \lambda_0\notag
\end{align} 
which is tractable for classical uncertainty sets.

The former cannot be formulated in a tractable manner in the general case.
 We now discuss some specific cases for which exact or approximate tractable reformulations of \eqref{eq:aarc-strength-constraint} can be obtained.
  In the following, we assume that the strength uncertainty is homothetic as in \eqref{eq:homothetic-uncertainty} with a linear function $\beta(\bzeta)=\bb\T\bzeta$ for the sake of simplicity. We therefore consider the following generic robust strength condition:
\begin{equation}
g\left(\bsig_0 + \sum_{j=1}^m \bsig_j\zeta_j\right) \leq 1-\bb\T\bzeta, \quad \forall \bzeta \in \Uu \label{eq:AARC-strength-condition}
\end{equation}
which is exactly the form considered in \eqref{eq:generic-robust-strength-constraint} with $\bsig=\bsig_0$ and $\bSig_j=\bsig_j$.
 We can therefore use all the reformulations or approximations discussed in section \ref{sec:generic-robust-strength-constraint} to obtain a tractable counterpart of \eqref{eq:AARC-strength-condition}.

\subsection{Illustrative example: robustness with respect to the loss of structural elements}

\subsubsection{A multifiber beam model with damageable components}
\begin{figure}
\begin{center}
\includegraphics[width=0.5\textwidth]{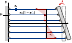}
\end{center}
\caption{Multifiber beam model under applied bending moment. Fiber uniaxial strengths are uncertain and can take values between $\sigma_0$ and $\sigma_0(1-\eta)$.}\label{fig:multifiber-pb}
\end{figure}

We consider a multifiber beam with fibers located at $y=y_i$, with $i=1,\ldots, n$ subjected to an imposed bending moment of unit reference magnitude applied via a rigid block on the right boundary (see Figure \ref{fig:multifiber-pb}). We therefore look for the maximum applicable bending moment $M^+=\lambda^+$. 
The fibers are assumed to remain horizontal and obey a uniaxial strength criterion of the form $|\sigma_i|\leq \sigma_0$ where $\sigma_i$ is the fiber axial stress and $\sigma_0$ is the fiber nominal strength.
 Assuming that each fiber is of area $a_i$, the global equilibrium can be written as:
\begin{align}
\sum_{i=1}^n -y_ia_i \sigma_i &= \lambda
\end{align}
Note that we do not specify the value of the normal force which can be left free.

The nominal limit analysis problem is therefore given by:
\begin{equation}
\begin{array}{rl}
\displaystyle{\max_{\lambda,(\sigma_i)}} & \lambda \\
\text{s.t.} & \displaystyle{\sum_{i=1}^n -y_ia_i \sigma_i = \lambda}\\
& |\sigma_i| \leq \sigma_0 \quad \forall i=1,\ldots,n
\end{array}
\end{equation}

For the uncertain case, we consider that each fiber strength is uncertain and given by \eqref{eq:component-loss}
where $\bzeta \in \Uu = \Uu_\text{budget}^+(\Gamma)$ is assumed to belong to the one-sided budget uncertainty set of dimension $n$.

The robust limit analysis problem is given by:
\begin{equation}
\begin{array}{rl}
\displaystyle{\max_{\bar\lambda}\min_{\bzeta\in\Uu}\max_{(\sigma_i(\bzeta),\lambda(\bzeta))}} & \bar\lambda \\
\text{s.t.} & \displaystyle{\sum_{i=1}^n -y_ia_i \sigma_i(\bzeta) = \lambda(\bzeta)}\\
& |\sigma_i(\bzeta)| \leq (1-\eta\zeta_i)\sigma_0 \quad \forall i=1,\ldots,n \\
& \bar\lambda \leq \lambda(\bzeta)
\end{array} \label{eq:bending-problem}
\end{equation}

We rely on a AARC formulation where $\bsig(\bzeta)=\bsig+\bSig\bzeta$ and $\lambda(\bzeta)=\lambda+\bLambda\T\bzeta$ such that:
\begin{equation}
\begin{array}{rll}
\displaystyle{\max_{\bar\lambda,\lambda,\Lambda,\bsig,\bSig}} & \bar\lambda & \\
\text{s.t.} & \displaystyle{\sum_{i=1}^n -y_ia_i \sigma_i = \lambda} & \\
& \displaystyle{\sum_{i=1}^n -y_ia_i \Sigma_{ij} = \Lambda_j} & \forall j=1,\ldots,n\\
& |\sigma_i + \be_i\T\bSig\bzeta| \leq (1-\eta\zeta_i)\sigma_0 & \forall i=1,\ldots,n,\quad \forall\bzeta\in\Uu \\
& \bar\lambda \leq \lambda + \bLambda\T\bzeta & \forall\bzeta\in\Uu
\end{array}
\end{equation}
where $\be_i$ is $i$-th canonical unit basis vector of $\RR^n$.

%The last robust constraint can be reformulated as follows:
%\begin{align}
%\bar\lambda \leq \lambda + \bLambda\T\bzeta \quad \forall\bzeta\in\Uu \\
%\Leftrightarrow \quad \bar\lambda + \max_{\bzeta\in\Uu}\: \{-\bLambda\T\bzeta\} \leq \lambda\\
%\Leftrightarrow \quad \bar\lambda + \|{-\bLambda}\|_* \leq \lambda
%\end{align} 
%
%The other robust constraints are equivalent to:
%\begin{align}
%&\begin{cases}
%\sigma_i + \be_i\T\bSig\bzeta + \eta\sigma_0\be_i\T\bzeta \leq \sigma_0 \\
%-\sigma_i - \be_i\T\bSig\bzeta + \eta\sigma_0\be_i\T\bzeta \leq \sigma_0
%\end{cases} \\
%\Leftrightarrow \quad &
%\begin{cases}
%\displaystyle{\sigma_i + \max_{\bzeta\in\Uu} \{\be_i\T(\bSig + \eta\sigma_0\bI)\bzeta\} \leq \sigma_0} \\
%\displaystyle{-\sigma_i + \max_{\bzeta\in\Uu} \{\be_i\T(-\bSig + \eta\sigma_0\bI)\bzeta\} \leq \sigma_0}
%\end{cases}\\
%\Leftrightarrow \quad &
%\begin{cases}
%\displaystyle{\sigma_i + \|(\bSig\T + \eta\sigma_0\bI)\be_i\|_* \leq \sigma_0} \\
%\displaystyle{-\sigma_i + \|(-\bSig\T + \eta\sigma_0\bI)\be_i\|_* \leq \sigma_0}
%\end{cases}
%\end{align}

Following the results of the previous section, the corresponding AARC problem reads:
\begin{equation}
\begin{array}{rll}
\displaystyle{\max_{\bar\lambda,\lambda,\Lambda,\bsig,\bSig}} & \bar\lambda & \\
\text{s.t.} & \displaystyle{\sum_{i=1}^n -y_ia_i \sigma_i = \lambda} & \\
& \displaystyle{\sum_{i=1}^n -y_ia_i \Sigma_{ij} = \Lambda_j} & \forall j=1,\ldots,n\\
& \displaystyle{\sigma_i + \|(\bSig\T + \eta\sigma_0\bI)\be_i\|_* \leq \sigma_0}  & \forall i=1,\ldots,n \\
& \displaystyle{-\sigma_i + \|(-\bSig\T + \eta\sigma_0\bI)\be_i\|_* \leq \sigma_0}  & \forall i=1,\ldots,n \\
& \bar\lambda + \|{-\bLambda}\|_* \leq \lambda
\end{array} \label{eq:AARC-bending}
\end{equation}
Note that \eqref{eq:AARC-bending} turns out to be a simple LP problem accounting for the fact that the dual norm \eqref{eq:dual-norm-one-sided-budget} admits a simple LP representation.

\subsubsection{Results}

In the following, we assume that fibers have the same cross-section $a_i=1/n$ corresponding to a total cross-section $\sum_{i=1}^n a_i = 1$.
 We first consider the case with $n=4$ fibers.
 We numerically solve the AARC formulation with the one-sided budget uncertainty for various values of $\Gamma \in [0;n]$ and $\eta\in[0;1]$.
 Results are reported in Figure \ref{fig:bending-n4-uncertainty} in thick solid lines where we observe that $\lambda_\text{AARC}$ decreases from its nominal value at $\Gamma=0$ to $(1-\eta)\lambda_\text{N}$ for $\Gamma=n$.
 In-between, a piecewise-linear behaviour is observed corresponding to the fact that, for $\Gamma \leq 2$, the worst case corresponds to the  progressive degradation of the two outermost fibers, whereas for $\Gamma \geq 2$ the outermost fibers have reached their worst-case strength $(1-\eta)\sigma_0$ and the innermost fibers start deteriorating.
  For $\Gamma=n=4$, all fibers have reached their maximum degradation level $\eta$ and the limit load is given by $(1-\eta)\lambda_\text{N}$.

Alongside these results, we also report the evolution corresponding to the choice of a box-uncertainty with maximum value $\Gamma/n$ (dashed line in Figure \ref{fig:bending-n4-uncertainty}) instead of the budget uncertainty.
This choice can be easily seen to enforce a uniform repartition of the uncertain strength of value $\sigma_0 (1-\eta\frac{\Gamma}{n})$ for all fibers.
This yields an affine evolution of the limit load in terms of $\Gamma$: $\lambda =  (1-\eta\frac{\Gamma}{n})\lambda_\text{N}$.
In this particular case, a static decision rule would yield the same result as the AARC formulation.
 Conversely, the static decision rule for the budget uncertainty yields the conservative lower-bound dotted-dashed lines of Figure \ref{fig:bending-n4-uncertainty} corresponding to a constant value of $(1-\eta)\lambda_\text{N}$ as soon as $\Gamma \geq 1$, as discussed in section \ref{sec:static-failure}.

\begin{figure}
\begin{center}
\includegraphics[width=0.6\textwidth]{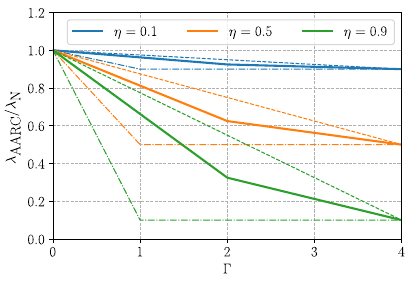}
\end{center}
\caption{Robust limit load for the bending problem with $n=4$ fibers: solid (---) lines: robust AARC solution with budget uncertainty; dashed ($--$) lines: robust AARC solution with box uncertainty of size $\Gamma/n$, dotted-dashed ($-\cdot-$) lines: robust static solution with budget uncertainty.}\label{fig:bending-n4-uncertainty}
\end{figure}

We now compare the AARC solution with budget uncertainty with random realizations of the corresponding uncertain limit analysis problem.
To do so, we draw randomly $N_s=100$ samples $\bzeta^k\in \RR^n$ and project each $\bzeta^k$ onto $\Uu^+(\Gamma)$.
We then solve the limit analysis problem corresponding to each realization and report the obtained limit load statistics (minimum, average and maximum values) in Figure \ref{fig:bending-samples} as a function of $\Gamma$.
For both $n=4$ and $n=100$ fibers, we can observe that the AARC limit load coincides with the worst-case limit load for all random samples.
 The average value corresponds to the uniform distribution where each $\bzeta_i\approx \Gamma/n$.
 For this particular example, this result holds for any value of $\eta$.
 In the case with a high number of fibers, the robust static limit load is extremely conservative since it yields $(1-\eta)\lambda_\text{N}$ for $\Gamma \geq 1$.

\begin{figure}
\begin{center}
\begin{subfigure}{0.49\textwidth}
\includegraphics[width=\textwidth]{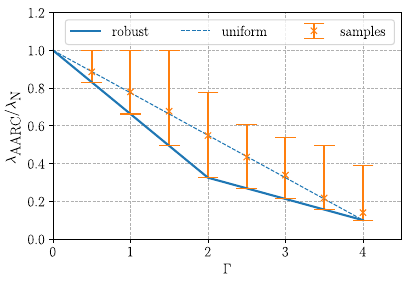}
\caption{$n=4$ fibers}
\end{subfigure}
\hfill
\begin{subfigure}{0.49\textwidth}
\includegraphics[width=\textwidth]{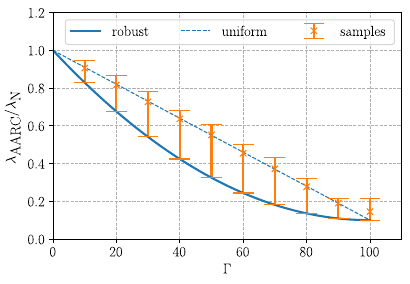}
\caption{$n=100$ fibers}\label{fig:bending-n100}
\end{subfigure}
\end{center}
\caption{Robust limit load for the bending problem with $\eta=0.9$.
 Samples are drawn randomly and projected onto $\Uu^+(\Gamma)$, error bars indicate minimum, average and maximum values.}\label{fig:bending-samples}
\end{figure}

\begin{Remark}
Note that in the limit of a large number of fibers $n\gg 1$, one can easily show that the worst-case solution is given by top and bottom 
regions with fraction $\Gamma/n$ having the worst-case strength $\sigma_0(1-\eta)$, the central region, of fraction $(1-\Gamma/n)$,
 keeping its nominal strength $\sigma_0$.
This yields the following value for the worst-case limit load:
\begin{equation}
\lambda = \left(1-\eta + \eta(1-\Gamma/n)^2\right)\lambda_\text{N}
\end{equation}
which matches very well the robust solution obtained in Figure \ref{fig:bending-n100}.
\end{Remark}

\subsubsection{Case with an additional constraint}
We now slightly change problem \eqref{eq:bending-problem} by considering the additional constraint:
\begin{equation}
\sum_{i=1}^n a_i \sigma_i(\bzeta) = 0, \label{eq:bending-problem-zero-average}
\end{equation}
enforcing a zero average force. The corresponding robust problem is similar to \eqref{eq:AARC-bending} with the additional constraints:
\begin{align}
\sum_{i=1}^n a_i \sigma_i &= 0\\
 \sum_{i=1}^n a_i \Sigma_{ij} &= 0 \quad \forall j=1,\ldots, n
\end{align}

The corresponding results for $n=100$ are reported in Figure \ref{fig:bending-zero-average}. We can see that for moderate values of $\eta$, e.g. $\eta=0.5$, the robust solution coincides with the exact worst-case solution. However, for larger values of $\eta$ the two solutions depart, with an increasing difference when $\eta \to 1$. In the particular case $\eta=1$, we observe that $\lambda_\text{AARC}=0$ as soon as $\Gamma > 1$. This example illustrates the fact that affine decision rules are not optimal in general and may also produce overly-conservative estimates. However, in this case, they are much less conservative than a static decision rule and still work well for a moderate value of $\eta$. 

\begin{figure}
\begin{center}
\begin{subfigure}{0.6\textwidth}
\includegraphics[width=\textwidth]{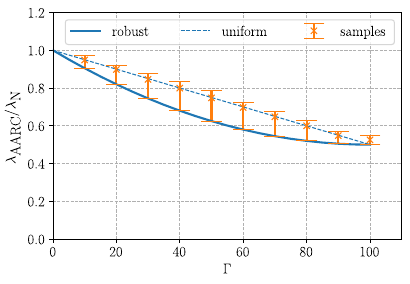}
\caption{$\eta=0.5$}
\end{subfigure}
\\
\begin{subfigure}{0.6\textwidth}
\includegraphics[width=\textwidth]{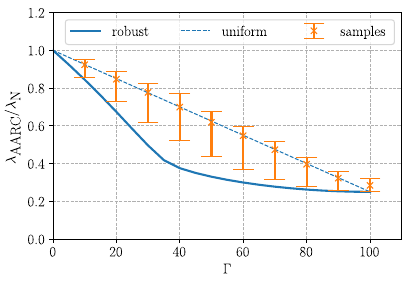}
\caption{$\eta=0.75$}
\end{subfigure}
\\
\begin{subfigure}{0.6\textwidth}
\includegraphics[width=\textwidth]{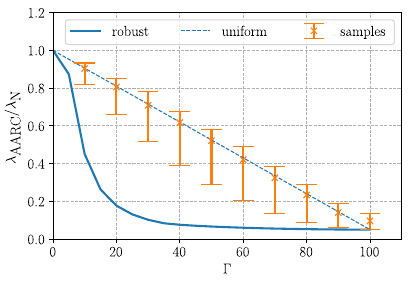}
\caption{$\eta=0.95$}
\end{subfigure}
\end{center}
\caption{Robust limit load for the bending problem \eqref{eq:bending-problem-zero-average}. Samples are drawn randomly and projected onto $\Uu^+(\Gamma)$, error bars indicate minimum, average and maximum values.} \label{fig:bending-zero-average}
\end{figure}

\section{The case of loading uncertainties}\label{sec:load-uncertainty}
Having derived the main tools needed for formulating a limit analysis problem under strength uncertainties, we can now discuss the case of loading uncertainties which use the same ideas.
 Note that we assume that strength conditions are deterministic for the sake of simplicity.
 There is however no obstacle in combining both loading and strength uncertainties.

\subsection{Uncertain limit analysis problem}

Similarly to \citet{kanno2007worst, matsuda2008robustness}, we assume here that the fixed distributed and surface loadings $\bf^\text{f}, \bt^\text{f}$ are uncertain and vary, around a nominal value, inside a convex set.
 In particular, we consider that the reference loadings $\bf^\text{r},\bt^\text{r}$ are deterministic.
 Assuming them to be uncertain adds another layer of difficulty due to the fact that the loading direction along which one has to optimize depends on the uncertainty realization.
 This specific case will be left for a future contribution.

Without loss of generality, we characterize the uncertain variation of the fixed loadings similarly to \eqref{eq:physical-uncertainty} as follows:
\begin{subequations}
\label{eq:loading-uncertainty}
\begin{align}
\bf^\text{f}(\bzeta) &= \bf^\text{f}_0 + \sum_{j=1}^m \bf^\text{f}_j \zeta_j =\bf^\text{f}_0 + \bF^\text{f} \bzeta \\
\bt^\text{f}(\bzeta) &= \bt^\text{f}_0 + \sum_{j=1}^m \bt^\text{f}_j \zeta_j = \bt^\text{f}_0 + \bT^\text{f}\bzeta 
\end{align}
\end{subequations}
where we introduced the matrices $\bF^\text{f}=[(\bf^\text{f}_j)_{j=1,\ldots,m}]$ and $\bT^\text{f}=[(\bt^\text{f}_j)_{j=1,\ldots,m}]$
and where $\bzeta \in \Uu$ with $\Uu$ a given convex uncertainty set. The corresponding uncertain limit analysis problem therefore reads:
\begin{equation}
\begin{array}{rll}
\displaystyle{\lambda^+(\bzeta) =\max_{\lambda, \bsig}} & \lambda \\
\text{s.t.} & \div\bsig + \lambda \bf^\text{r} + \bf^\text{f}_0 + \bF^\text{f} \bzeta= 0 \\
& \bsig\cdot\bn = \lambda \bt^\text{r} + \bt^\text{f}_0 + \bT^\text{f}\bzeta   \\
& \bsig \in G
\end{array} \label{eq:uncertain-LA-loading}
\end{equation}

\subsection{Robust counterpart}
Clearly, for this load uncertainty case, the use of static decision rules is doomed to fail since one cannot expect finding, except in very
 specific cases, a single stress field which is statically admissible with any realization of the uncertain loading \eqref{eq:loading-uncertainty}.
 One must therefore resort to an adjustable robust optimization which, similarly to \eqref{eq:adjustable-LA-1}, reads:
\begin{equation}
\begin{array}{rll}
\displaystyle{\lambda_\text{ARC} =\max_{\bar\lambda}} & \bar\lambda & \\
\text{s.t.} & \forall \bzeta \in \Uu, \exists \bsig,\lambda \st& \div\bsig + \lambda \bf^\text{r} + \bf^\text{f}_0 + \bF^\text{f} \bzeta = 0\\
& & \bsig\cdot\bn = \lambda \bt^\text{r} + \bt^\text{f}_0 + \bT^\text{f}\bzeta    \\
& & \bsig \in G \\
& & \bar\lambda \leq \lambda
\end{array} \label{eq:adjustable-LA-loading}
\end{equation}

Again, in order to obtain a safe and tractable approximation to the above robust formulation, we resort to the use of the affine decision rules \eqref{eq:affine-decision-rule} and obtain the following AARC:
\begin{equation}
\begin{array}{rll}
\displaystyle{\lambda_\text{AARC} =\max_{\bsig_i,\lambda_i}}\min_{\bzeta\in\Uu} & \displaystyle{\lambda_0 + \sum_{j=1}^m \lambda_j\zeta_j}\\
\text{s.t.} &\displaystyle{ \div\left(\bsig_0 + \sum_{j=1}^m \bsig_j\zeta_j\right)+ \left(\lambda_0 + \sum_{j=1}^m \lambda_j\zeta_j\right) \bf^\text{r} +  \bf^\text{f}_0 + \bF^\text{f} \bzeta= 0 } \\
& \displaystyle{\left(\bsig_0 + \sum_{j=1}^m \bsig_j\zeta_j\right)\cdot\bn = \left(\lambda_0 + \sum_{j=1}^m \lambda_j\zeta_j\right)\bt^\text{r} + \bt^\text{f}_0 + \bT^\text{f}\bzeta   }\\
& \left(\displaystyle{\bsig_0 + \sum_{j=1}^m \bsig_j\zeta_j}\right) \in G
\end{array} \label{eq:AARC-LA-loading-1}
\end{equation}

which can further be formulated as follows:
\begin{equation}
\begin{array}{rlll}
\displaystyle{\lambda_\text{AARC} =\max_{\bar\lambda, \bsig_i,\lambda_i}} &\bar\lambda&\\
\text{s.t.} &\displaystyle{ \div(\bsig_i) + \lambda_i \bf^\text{r} + \bf^\text{f}_i = 0 }& \forall i=0,\ldots,m \\
& \displaystyle{\bsig_i\cdot\bn = \lambda_i\bt^\text{r}  + \bt^\text{f}_i} & \forall i=0,\ldots,m\\
& \left(\displaystyle{\bsig_0 + \sum_{j=1}^m \bsig_j\zeta_j}\right) \in G & \forall \bzeta\in \Uu\\
& \displaystyle{\bar\lambda \leq \lambda_0 + \sum_{j=1}^m \lambda_j\zeta_j} & \forall \bzeta\in \Uu
\end{array} \label{eq:AARC-LA-loading}
\end{equation}

Clearly, \eqref{eq:AARC-LA-loading} bears striking similarities with \eqref{eq:AARC-LA-2} in the sense that we look for $1+m$ stress fields statically admissible with a given loading (here we have an additional fixed loading for each $j=1,\ldots,m$ compared to \eqref{eq:AARC-LA-2}).
In particular, uncertainty has been removed from the equilibrium equations whereas only the last two constraints are robust ones which must be reformulated.
Both of these constraints are exactly those which have already been considered in section \ref{sec:tractable-AARC}.
In particular, the robust strength constraint can be reformulated, either exactly or approximately, using the results of section \ref{sec:generic-robust-strength-constraint}.

For instance, if the loading uncertainty set is the box uncertainty $\Uu = \Uu_\infty$, using \eqref{eq:load-multiplier-reformulation} and the result of \thref{prop:roos-box}, the corresponding tractable AARC reads:
\begin{equation}
\begin{array}{rlll}
\displaystyle{ \max_{\bar\lambda, \bsig_i,\lambda_i, w_j, \bW_j}} &\bar\lambda&\\
\text{s.t.} &\displaystyle{ \div(\bsig_i) + \lambda_i \bf^\text{r} + \bf^\text{f}_i = 0 }& \forall i=0,\ldots,m \\
& \displaystyle{\bsig_i\cdot\bn = \lambda_i\bt^\text{r}  + \bt^\text{f}_i} & \forall i=0,\ldots,m\\
& \displaystyle{\sum_{j=1}^m w_j + g\left(\bsig_0 - \sum_{j=1}^m \bW_j\right) \leq 1 } & \\
& g(\bW_j - \bsig_j) \leq w_j & \forall j=1,\ldots,m \\ 
& g(\bW_j + \bsig_j) \leq w_j & \forall j=1,\ldots,m \\
& \displaystyle{\bar\lambda + \sum_{j=1}^m |\lambda_j| \leq \lambda_0} & 
\end{array}  \label{eq:AARC-LA-loading-example}
\end{equation}
Note that since \eqref{eq:AARC-LA-loading-example} uses the result from \thref{prop:roos-box}, we only obtain a safe approximation for $\lambda_\text{AARC}$ solution to \eqref{eq:AARC-LA-loading}.
 Since the uncertainty is polyhedral, we could have used instead the exact vertex-based reformulation of \thref{th:vertex-reformulation}.
 However, the latter would involve $2^m$ strength constraints whereas the approximate formulation \eqref{eq:AARC-LA-loading-example} involves only $1+2m$ such constraints which is much less burdensome if the uncertainty dimension $m$ is large.

\subsection{Limit analysis of a truss structure under loading uncertainties}

\begin{figure}
\begin{center}
\includegraphics[width=0.8\textwidth]{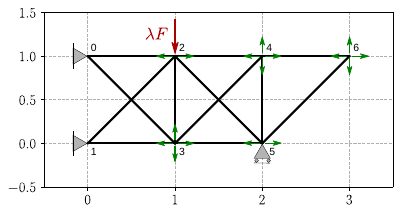}
\end{center}
\caption{A planar truss structure with a reference loading (in red) and uncertain fixed loadings (in green).}
\label{fig:truss}
\end{figure}

As an illustration, we consider the planar truss structure of Figure \ref{fig:truss} fixed at nodes 0 and 1, vertically supported at node 5 and subject to a reference vertical loading of amplitude $F$ at node 2 in nominal conditions.
 Each member $e=1,\ldots,12$ is connected with perfect hinges and therefore carries only uniform axial forces $N_e$.
 The strength condition for all members is given by $|N_e| \leq R_0$ where the axial strength $R_0$ is supposed to be the same for all members and not subject to uncertainty.
  We therefore seek the maximum force $\lambda F$ that the structure can sustain given equilibrium and strength conditions.
 Denoting by $\mathbf{N}$ the vector of unknown axial forces, by $\mathbf{f}^\text{r}$ the reference load vector and by $\mathbf{H}$ the equilibrium matrix, the nominal limit analysis problem is given by the following linear program:
\begin{equation}
\begin{array}{rll}
\displaystyle{\lambda_\text{N} = \max_{\lambda, \mathbf{N}}} & \lambda & \\
\text{s.t.} & \mathbf{H}\mathbf{N} + \lambda \mathbf{f}^\text{r} = 0 &\\
& |N_e|\leq R_0 & 1\leq e \leq 12
\end{array}
\label{eq:nominal-truss}
\end{equation}
%In the following, we write equivalently the member strength condition as follows: $\|\mathbf{N}\|_infty \leq R_0

The uncertain limit analysis problem is characterized by additional forces applied horizontally and vertically  at nodes 3--4--6 and horizontally only at nodes 2 and 5.
 Each of this 8 component is assumed to be uncertain and vary independently in the interval $[-\alpha \lambda_N F;\alpha \lambda_N F]$ for a given uncertainty amplitude $\alpha > 0$.
  As a result, the uncertain loading vector is given by:
\begin{equation}
\mathbf{f}^\text{f}(\bzeta) = \mathbf{F}^\text{f}\bzeta, \quad \text{for } \bzeta\in \Uu_\infty
\end{equation} 
where $\mathbf{F}^\text{f} \in \mathbb{R}^{9\times 8}$ is the matrix made of each uncertain loading case of individual amplitude $\alpha\lambda_N  F$ and $\bzeta\in \mathbb{R}^8$ is the vector of uncertain parameters subject to a unit box-uncertainty set.

Considering an AARC formulation with the following affine decision rule for the axial forces and the load multiplier:
\begin{subequations}
\label{eq:affine-N-lamb}
\begin{align}
\mathbf{N}(\bzeta) &= \mathbf{N}_0 + \sum_{j=1}^8 \mathbf{N}_j \zeta_j\\
\lambda(\bzeta) &= \lambda_0 + \sum_{j=1}^8 \lambda_j \zeta_j
\end{align}
\end{subequations}
the AARC formulation
 \eqref{eq:AARC-LA-loading-example} can be readily transposed to the present example as follows:
 \begin{equation}
\begin{array}{rlll}
\displaystyle{ \lambda_\text{AARC} = \max_{\bar\lambda, \mathbf{N}_i,\lambda_i, \mathbf{W}_j, \mathbf{w}_j}} &\bar\lambda&\\
\text{s.t.} &\displaystyle{\mathbf{H}\mathbf{N}_0 + \lambda \mathbf{f}^\text{r} = 0} & \\
& \mathbf{H}\mathbf{N}_j +  \lambda_j \mathbf{f}^\text{r} + \mathbf{F}^{\text{f}}_j = 0 &  1\leq j \leq 8 \\
& \displaystyle{\sum_{j=1}^m w_{ej} + \left|N_{e0} - \sum_{j=1}^8 W_{ej}\right| \leq R_0 } & 1\leq e \leq 12\\
& \max\{|W_{ej} + N_{ej}|;|W_{ej}-N_{ej}|\} \leq w_{ej} &  1\leq j \leq 8, 1\leq e \leq 12 \\
& \displaystyle{\bar\lambda + \sum_{j=1}^m |\lambda_j| \leq \lambda_0} & 
\end{array}  \label{eq:AARC-truss}
\end{equation}
Note that since the strength condition is a simple interval here, the reformulation using \thref{prop:roos-box} is exact.
 Hence, the only source of conservativeness is the use of affine decision rules \eqref{eq:affine-N-lamb}.

This problem is readily formulated and solved using the \texttt{cvxpy} package \citep{diamond2016cvxpy}.
 We first compute the nominal collapse load $\lambda_\text{N}$ by solving \eqref{eq:nominal-truss}.
 The associated collapse mechanism and distribution of optimal axial forces are represented in Figure \ref{fig:nominal-truss}.
  In particular, we see that collapse is mainly attributed to a failure in compression of the vertical member located directly below the vertical loading.
  The overhang part of the structure on the right is not involved in the collapse mechanism.

\begin{figure}
\begin{center}
\includegraphics[width=0.8\textwidth]{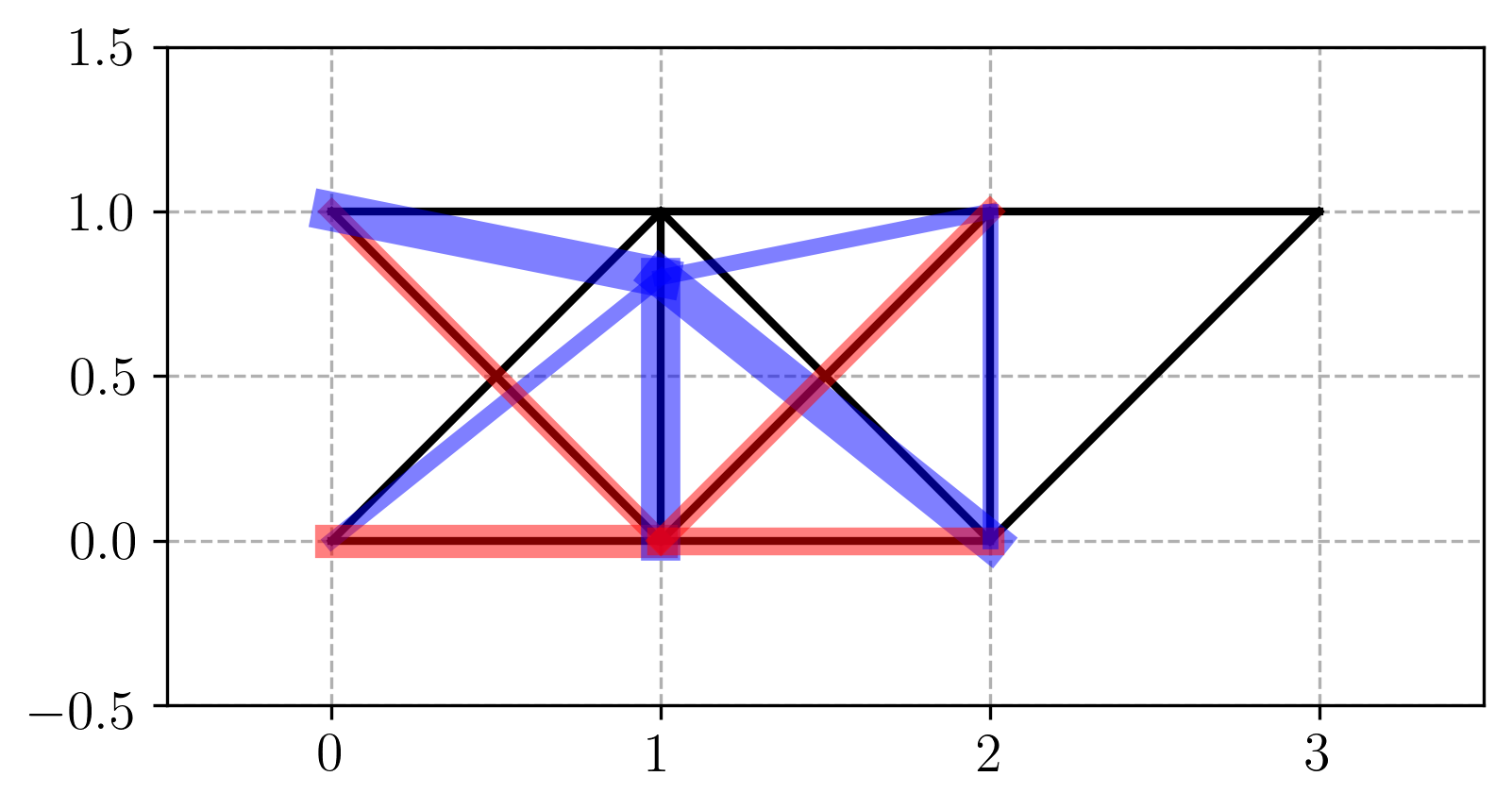}
\end{center}
\caption{Collapse mechanism and optimal axial forces (blue denotes compression $N_e < 0$ and red denotes tension $N_e>0$) for the nominal case.}
\label{fig:nominal-truss}
\end{figure}

We now solve the robust limit analysis problem \eqref{eq:AARC-truss} for various values of the uncertainty amplitude $\alpha$.
 The corresponding evolution of $\lambda_\text{AARC}$ normalized with respect to the nominal load factor $\lambda_\text{N}$ is represented Figure \ref{fig:AARC-truss}.
 It can be observed that for values of $\alpha \leq 6\%$, the robust limit load is unchanged.
 Between 6 and 12\%, the limit load decreases in a piecewise linear fashion.
 In the range 12\% to 24\%, the decrease takes place at a constant rate.
 Between 24\% and 26\%, the decrease rate is stronger and the structure eventually reaches a zero limit load.
   The AARC solution is also compared with the exact solution of the uncertain problem obtained by enumerating the $2^8$ vertices of the uncertainty set $\Uu_\infty$.
  The corresponding statistics are also reported in Figure \ref{fig:AARC-truss} and it can be seen that the AARC solution matches exactly the worst-case solution of the vertex enumeration, even in the final regime approaching a zero limit load.
  Finally, the piecewise linear evolution of the limit load with respect to $\alpha$ can be explained by a change of the corresponding collapse mechanism as a function of $\alpha$.
Indeed, when representing the worst-case collapse mechanism obtained from the vertex enumeration (see Figure \ref{fig:collapse-uncertain}), we see that each change of slope is associated with a change in the collapse mechanism and a change of the most critical vertex.
 For instance, for $\alpha = 5\%$, the mechanism is the same as in the nominal case although the axial force distribution is slightly different due to the presence of additional loadings.
 For $\alpha=10\%$, a rigid-body rotation of the right part of the structure is involved whereas for $\alpha=15\%$ or 25\% the mechanism essentially differs in the movement of the central vertical member.

\begin{figure}
\begin{center}
\includegraphics[width=0.7\textwidth]{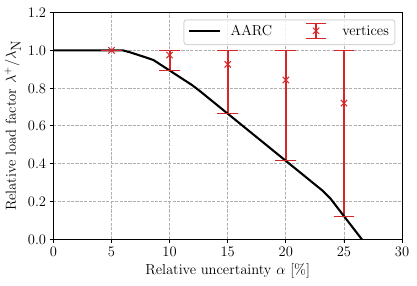}
\end{center}
\caption{Evolution of the AARC load factor as a function of the uncertainty amplitude $\alpha$.
 The "vertices" statistics denote the min, mean and maximum values of the limit load when enumerating the $2^8$ vertices of the polyhedral uncertainty set $\Uu_\infty$.}
\label{fig:AARC-truss}
\end{figure}

\begin{figure}
\begin{center}
\begin{subfigure}{0.49\textwidth}
\includegraphics[width=\textwidth]{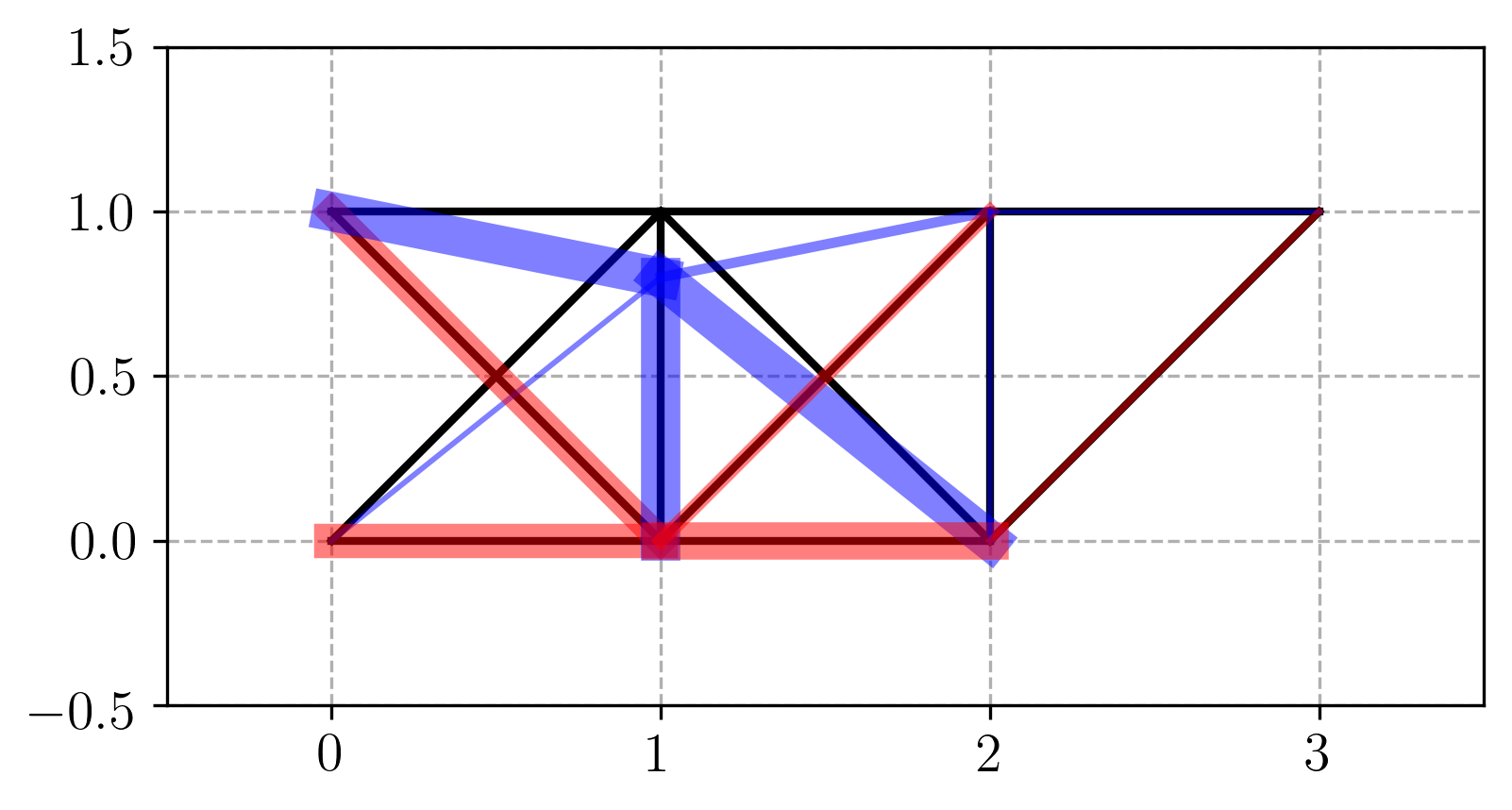}
\caption{$\alpha=5\%$}
\end{subfigure}
\hfill
\begin{subfigure}{0.49\textwidth}
\includegraphics[width=\textwidth]{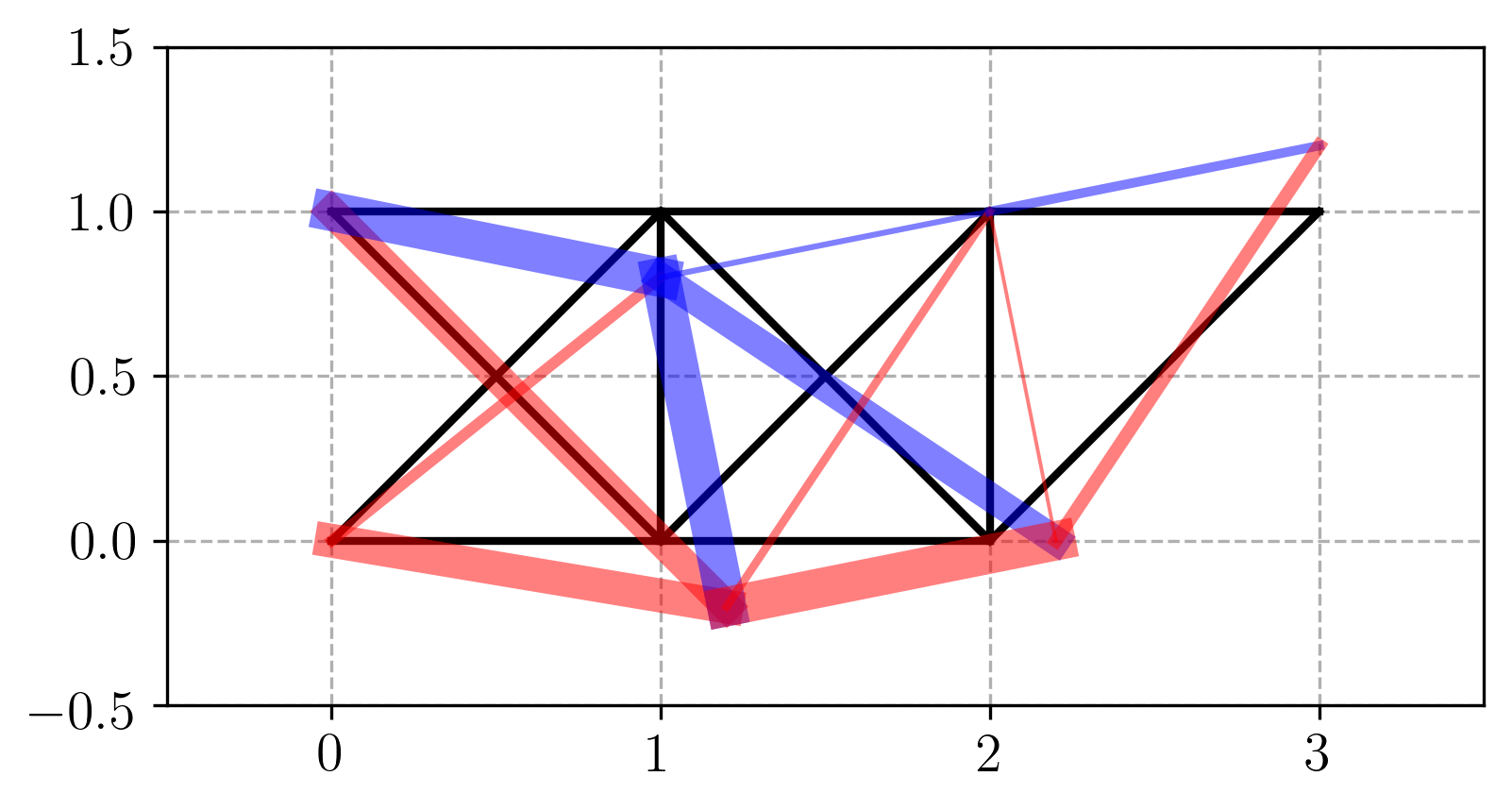}
\caption{$\alpha=10\%$}
\end{subfigure}
\begin{subfigure}{0.49\textwidth}
\includegraphics[width=\textwidth]{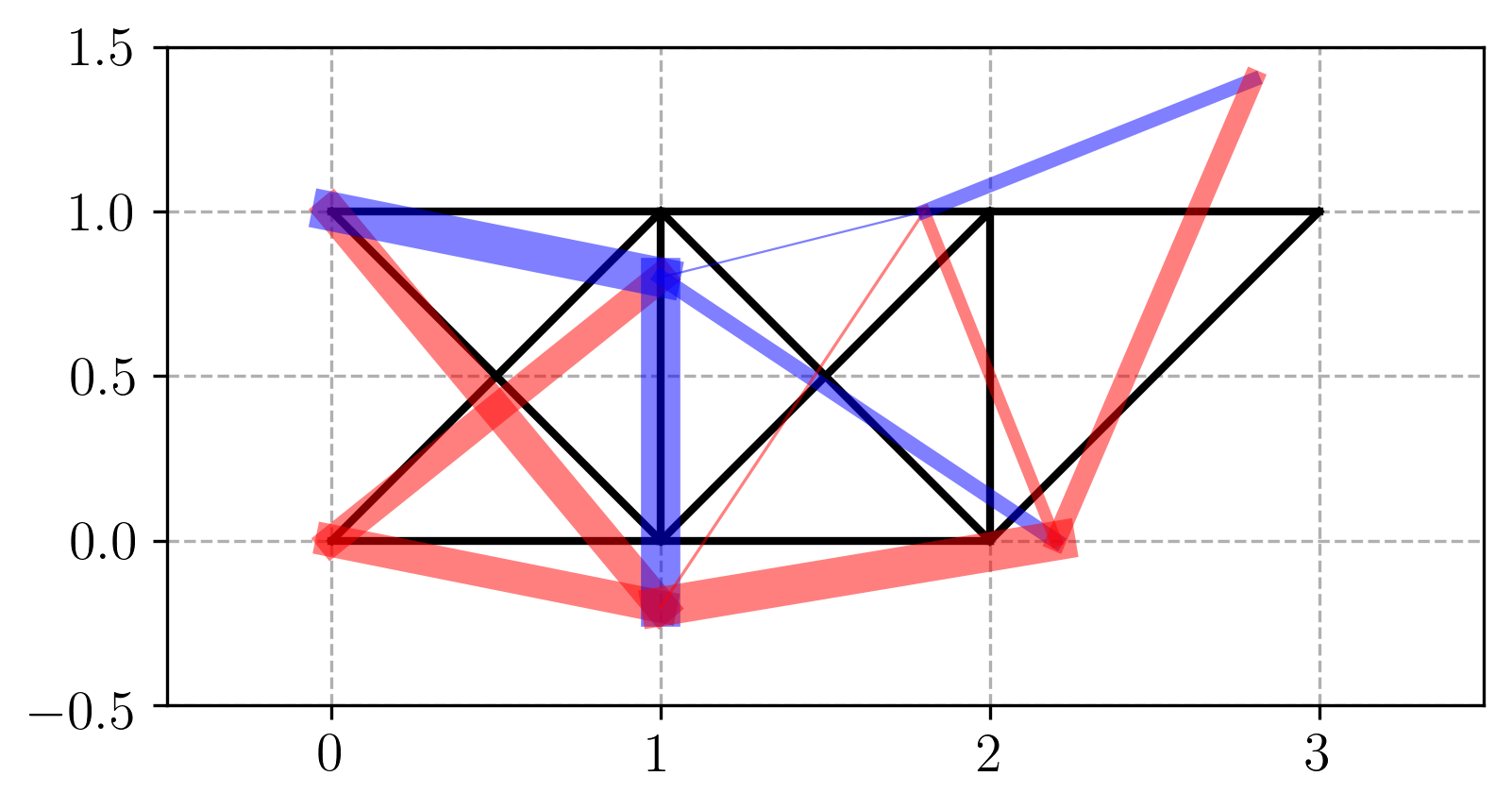}
\caption{$\alpha=15\%$}
\end{subfigure}
\hfill
\begin{subfigure}{0.49\textwidth}
\includegraphics[width=\textwidth]{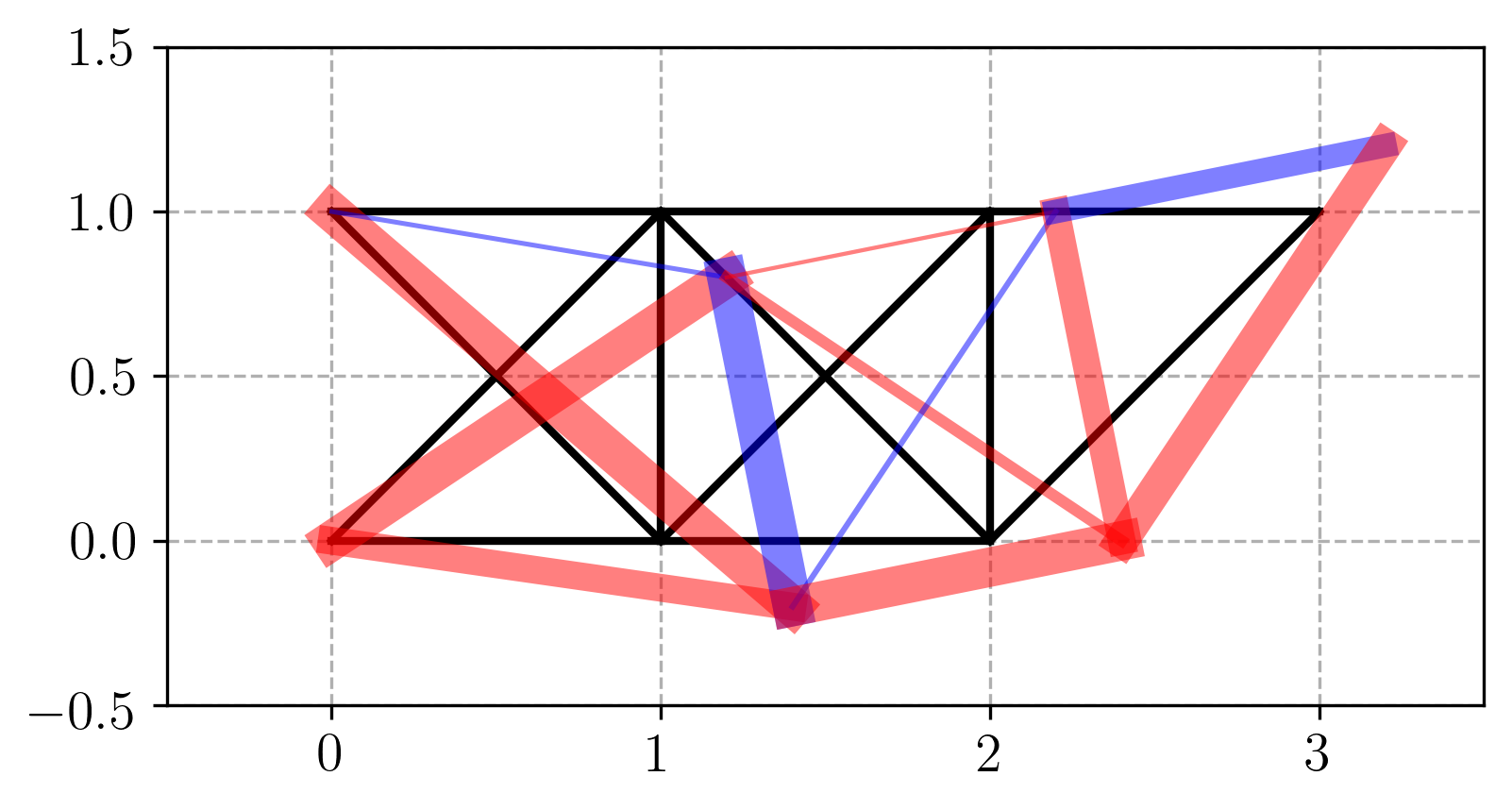}
\caption{$\alpha=25\%$}
\end{subfigure}
\end{center}
\caption{Evolution of the collapse mechanism as a function of the uncertainty parameters}
\label{fig:collapse-uncertain}
\end{figure}

\section{Conclusions}\label{sec:conclusions}

In this work, we have proposed an extension of limit analysis theory to an uncertain setting using the robust optimization (RO) framework.
 Since limit analysis problems can be formulated as convex optimization programs, we can naturally apply robust optimization concepts when considering uncertain data.
 We covered two different sources of uncertainty, namely strength and loading uncertainty.
 In the strength uncertainty case, the strength domain $G$ is assumed to be uncertain.
  The main feature of RO is to propose tractable reformulations of uncertain constraints as standard deterministic constraints.
   This step then defines a robust strength domain i.e. the smallest possible strength domain corresponding to all uncertainty realizations.
    Obtaining an explicit expression for the robust domain depends on how constraints depend on the uncertain parameters.
     We essentially explored two cases:
\begin{itemize}
\item homothetic uncertainty: the strength domain is assumed to keep its shape but depends on an uncertain scaling factor. The robust domain is obtained by minimizing this scaling factor over the uncertainty set.
\item generic form for small uncertainties: in this case, the strength domain is defined through the composition of a convex function and an uncertain linear operator which can be replaced by a first-order expansion of the uncertainty parameter.
\end{itemize}
In this second case, the uncertain strength constraint takes the form \eqref{eq:generic-robust-strength-constraint}. The fact that the constraint is a convex function of the uncertainty parameter makes the reformulation particularly challenging.
 We therefore presented both exact or conservative approximate reformulations to handle such a constraint.

Another important aspect of RO is related to the use of static or adjustable optimization variables.
 In the present case, it amounts to deciding whether we consider the stress field and load multiplier that we optimize for to be independent or dependent on the uncertain parameters.
 The former choice corresponds to static robust counterparts i.e. one aims at finding a single stress field in equilibrium and satisfying the strength conditions for any possible uncertainty realization.
  Clearly, this is a very conservative approach.
   In particular, finding such a stress field is not always possible.
    Our experience suggests that static formulations can be used only when considering strength uncertainty and in the case where this uncertainty is of small amplitude.
     Intuitively, this corresponds to the fact that the collapse stress field is only mildly perturbed by the realization of the uncertainty.
     For instance, in the case where a few elements can fail in a structure, we showed that RO formulations cannot be handled with static formulations.

One therefore needs to resort to adjustable formulations in such cases, in particular when the loading is also uncertain.
 In order to obtain tractable reformulations in the adjustable case, simple decision rules must be chosen such as affine decision rules.
 The corresponding affinely adjustable problem can then be reformulated to yield a deterministic optimization problem but involving a much larger number of optimization variables.
 In particular, robust strength constraints also take the form \eqref{eq:generic-robust-strength-constraint} in this case. 
 We have considered simple examples to illustrate the efficiency of the formulation. 
 Usually, AARC formulations seem to be very efficient although we have exhibited a specific case for which they still seem to be too conservative.\\

Further research perspectives are numerous.
 First, numerical implementation of the proposed formulations would be needed to assess their efficiency on more involved examples.
 In this respect, specific strategies should probably be investigated in order to reduce the computational cost of the corresponding large-scale optimization problems, especially when considering AARC formulations.
  Analyzing such more advanced examples would therefore shed light on the necessity, or not, of considering more complex decision rules than affine rules such as piecewise-linear or nonlinear decision rules.
   On a more fundamental level, the present work considered only a static (lower-bound) formulation of limit analysis whereas it would be interesting to also investigate the formulation of the corresponding dual problems based on a kinematic (upper-bound) limit analysis formulation.
    Finally, RO relies on the notion of uncertainty sets which consists in choosing uncertain parameters with a finite support.
    In practice, uncertain material parameters often rely on probability distributions with unbounded support (e.g. normal or log-normal distributions).
     In order to rationalize the choice of uncertainty sets, it would therefore be valuable to investigate the notion of probabilistic guarantees which can draw a parallel between uncertainty sets and probabilistic constraints in a chance-constrained programming setting.

\appendix

\section{Dual uncertainty norm}\label{app:dual-norms}
We often have to compute expressions of the form $\sup_{\bzeta\in \Uu} \{\bz\cdot\bzeta\}$ which corresponds to the \textit{support function} $\pi_{\Uu}(\bz)$ of the uncertainty set.
 Owing to the fact that the uncertainty sets of section \ref{sec:uncertainty-sets} can be represented as the unit ball with respect to some specific norm $\|\cdot\|$, we denote the corresponding support function using the dual norm $\|\cdot\|_*$: 
\begin{equation}
\pi_{\Uu}(\bz)=\|\bz\|_* = \begin{cases} 
\|\bz\|_1 & \text{for } \Uu_\infty \\
\|\bz\|_2 & \text{for } \Uu_2 \\
\|\bz\|_\infty & \text{for } \Uu_1 \\
\displaystyle{\inf_{\by}\: \{\|\by\|_1+\Gamma\|\bz-\by\|_\infty\}} & \text{for } \Uu_\text{budget}(\Gamma) 
\end{cases}
\end{equation}

One may also be interested in using a \textit{one-sided} budget uncertainty set defined as:
\begin{equation}
\Uu_\text{budget}^+(\Gamma) = \{\bzeta \in \RR^m \st 0 \leq \bzeta \leq 1 \text{ and } \|\bzeta\|_1 \leq \Gamma\}
\end{equation}
whose dual norm reads:
\begin{equation}
\|\bz\|_* = \inf_{\by}\: \{\|\operatorname{maximum}\{\by,0\}\|_1+\Gamma\|\bz-\by\|_\infty\} \label{eq:dual-norm-one-sided-budget}
\end{equation}
where the "$\operatorname{maximum}$" function is applied component-wise.

\section{Roos et al. approximation for polyhedral uncertainty}\label{app:Roos-approximation}
In the following, we adapt the formulation of \citet{roos2018approximation} to a polyhedral uncertainty set described as follows:
\begin{equation}
\Uu = \{\bzeta \in \RR^m \st \exists \bxi \in \RR^q \text{ and } \mathbf{D}_1\bzeta+\mathbf{D}_2\bxi \leq \mathbf{d}\} \label{eq:polyhedral-uncertainty}
\end{equation}
where $d\in\RR^r$, $\mathbf{D}_1\in \RR^{r\times m}$, $\mathbf{D}_2\in \RR^{r\times q}$ and $\bxi$ represents slack variables. In particular, contrary to \citet{roos2018approximation}, we do not assume the uncertainty set to belong to the positive orthant.\\

\subsection{General result}

\begin{theorem}[Roos et al. approximation for polyhedral uncertainty]\label{th:Roos-approximation}
For $\Uu$ given by \eqref{eq:polyhedral-uncertainty}, the robust constraint \eqref{eq:generic-robust-strength-constraint} can be conservatively approximated as follows  \citep{roos2018approximation}:
\begin{equation}
\exists \bv\in\mathbb{R}^r,\: \bV\in \mathbb{R}^{d\times r} \st
\begin{cases} \displaystyle{g\left(\bsig - \bV\bd\right) + \bd\T\bv \leq 1} \\
 g(\bV_i) \leq v_i \quad \forall i=1,\ldots,r\\
 \bD_1\T\bv = \bb\\
 \bD_2\T\bv = 0\\
 \bV\bD_1 + \bSig = 0\\
 \bV\bD_2 = 0
\end{cases} \label{eq:Roos-approximation}
\end{equation}
where $\bV_i$ denotes the $i$-th column of $\bV$.
\end{theorem}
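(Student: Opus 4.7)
The plan is to establish that any $(\bv,\bV)$ satisfying the auxiliary system in \eqref{eq:Roos-approximation} yields a pointwise bound $g(\bsig+\bSig\bzeta)+\bb\T\bzeta \leq 1$ valid for every $\bzeta \in \Uu$. I therefore fix an arbitrary $\bzeta \in \Uu$ together with a witnessing slack $\bxi$ (so $\bD_1\bzeta+\bD_2\bxi \leq \bd$) and manipulate $g(\bsig+\bSig\bzeta)$ until only $\bzeta$-independent terms remain on the right-hand side.

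The algebraic heart uses the two matrix identities $\bV\bD_1+\bSig=0$ and $\bV\bD_2=0$ to rewrite
$$\bSig\bzeta \;=\; -\bV(\bD_1\bzeta+\bD_2\bxi) \;=\; -\bV\bd+\bV\bs, \qquad \bs := \bd-\bD_1\bzeta-\bD_2\bxi \geq 0,$$
so that $\bsig+\bSig\bzeta = (\bsig-\bV\bd)+\sum_{i=1}^r s_i \bV_i$ decomposes as a $\bzeta$-independent vector plus a \emph{nonnegative} combination of the columns of $\bV$. The key ingredient is then the sublinearity of the gauge $g$ (convexity plus positive homogeneity), which gives
$$g(\bsig+\bSig\bzeta) \;\leq\; g(\bsig-\bV\bd) + \sum_{i=1}^r s_i\, g(\bV_i) \;\leq\; g(\bsig-\bV\bd) + \bv\T\bs,$$
the second inequality invoking the bounds $g(\bV_i)\leq v_i$ together with $s_i\geq 0$.

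Transferring the $\bb\T\bzeta$ term is then routine: expanding $\bv\T\bs = \bd\T\bv - (\bD_1\T\bv)\T\bzeta - (\bD_2\T\bv)\T\bxi$ and substituting the dual-like identities $\bD_1\T\bv=\bb$, $\bD_2\T\bv=0$ eliminates the $\bxi$-dependence entirely and produces $\bd\T\bv-\bb\T\bzeta$. Combining with the first inequality $g(\bsig-\bV\bd)+\bd\T\bv\leq 1$ of the system yields $g(\bsig+\bSig\bzeta)+\bb\T\bzeta \leq 1$ uniformly in $\bzeta\in\Uu$, which is the claim.

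The main obstacle is the sublinearity step: bounding $g$ of a linear combination by a linear combination of $g$-values requires both nonnegativity of the coefficients $s_i$ and positive homogeneity of $g$. Nonnegativity comes for free from the polyhedral description of $\Uu$; positive homogeneity and subadditivity are available because $g$ is a Minkowski gauge of a convex set containing the origin. Because the bound is produced by a single choice of $(\bv,\bV)$ rather than by a convex-concave minimax exchange, the resulting reformulation is only a \emph{safe} approximation, not an equivalence: $\bv$ and $\bV$ play the role of Lagrange-type multipliers dualising respectively the right-hand side and the matrix $\bSig$ against the defining inequalities of $\Uu$, and sharpness would require strong duality of a saddle problem that is convex (not concave) in $\bzeta$.
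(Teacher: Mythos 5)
Your proof is correct, but it takes a genuinely different route from the paper's. You give a direct \emph{primal} verification: fixing $\bzeta\in\Uu$ with a witnessing slack $\bxi$, you use $\bV\bD_1+\bSig=0$ and $\bV\bD_2=0$ to decompose $\bsig+\bSig\bzeta=(\bsig-\bV\bd)+\bV\bs$ with $\bs=\bd-\bD_1\bzeta-\bD_2\bxi\geq 0$, and then invoke sublinearity of the gauge together with $g(\bV_i)\leq v_i$ and the identities $\bD_1\T\bv=\bb$, $\bD_2\T\bv=0$ to collapse everything to $g(\bsig-\bV\bd)+\bd\T\bv-\bb\T\bzeta\leq 1-\bb\T\bzeta$. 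The algebra checks out, and since the theorem only claims a \emph{safe} approximation, this suffices. The paper instead \emph{derives} the system: it passes to the Fenchel conjugate of $g$, exchanges suprema, applies LP strong duality over the polyhedral $\Uu$ to replace the inner maximization by a dual minimization in $\blambda\geq 0$, restricts the resulting adjustable dual variable to the affine decision rule $\blambda=\bv-\bV\T\bz$, and finally converts suprema over the polar set $G^\circ$ back into gauge evaluations. Your $(\bv,\bV)$ are precisely the coefficients of that affine decision rule, so the two arguments are dual views of the same certificate. What your approach buys is brevity and self-containedness --- no conjugates, no LP duality, no bipolar theorem --- and it makes the mechanism of safety transparent (nonnegativity of $\bs$ plus sublinearity of $g$). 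What the paper's approach buys is constructiveness: it explains where the auxiliary variables come from, isolates the affine decision rule as the \emph{only} source of conservatism (which underlies the later observations on tightness, e.g.\ exactness for the cross-polytope case and recovery of the Bertsimas--Sim bound by setting the decision rule to a constant), whereas your closing remark attributes the gap somewhat loosely to the absence of a minimax exchange. Neither gap is a flaw in your argument; it fully proves the stated safe-approximation claim.
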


\begin{proof}
We can first reformulate \eqref{eq:generic-robust-strength-constraint} by introducing the Legendre-Fenchel transform of $g$:
\begin{align}
& g(\bsig + \bSig\bzeta) \leq 1-\bb\T\bzeta, \quad \forall \bzeta \in \Uu \notag \\
\Leftrightarrow \quad & \sup_{\bzeta \in \Uu} \:\{g(\bsig + \bSig\bzeta) +\bb\T\bzeta\} \leq 1 \notag \\
\Leftrightarrow \quad & \sup_{\bzeta \in \Uu} \:\{\sup_{\bz\in \dom g^*} \:\{(\bsig + \bSig\bzeta)\T\bz - g^*(\bz)\} +\bb\T\bzeta\} \leq 1 \notag \\
\Leftrightarrow \quad & \sup_{\bz\in \dom g^*} \:\{\bsig\T\bz - g^*(\bz) + \sup_{\bzeta \in \Uu} \:\{\bz\T\bSig\bzeta + \bb\T\bzeta \}\} \leq 1 \label{eq:app:Roos-reformulation-1}
\end{align}
in which we exchanged the order of both maximizations. Note that when $g$ is the gauge function of a convex set $G$, $g^*=\delta_{G^\circ}$ is the indicator of the polar set $G^\circ$ so that $\dom g^* = G^\circ$.\\

We notice that the inner maximization over $\Uu$ is a linear program for fixed $\bz$ and we can then use strong duality of linear programming over the polyhedral set \eqref{eq:polyhedral-uncertainty} to write:
\begin{equation}
 \begin{array}{rl} \displaystyle{\sup_{\bzeta \in \Uu} \:\{\bz\T\bSig\bzeta + \bb\T\bzeta \} = \inf_{\blambda \in \mathbb{R}^r}} & \{\bd\T\blambda\} \\
 \text{s.t.} & \bD_1\T\blambda = \bSig\T\bz + \bb \\
 & \bD_2\T\blambda = 0 \\
 & \blambda \geq 0
 \end{array}
\end{equation}
So that \eqref{eq:app:Roos-reformulation-1} is equivalent to:
\begin{equation}
\forall \boldsymbol{z}\in G^{\circ}, \exists \blambda \in \RR^r \st  \begin{cases}
 \bsig\T\boldsymbol{z} - g^*(\bz) + \mathbf{d}\T\blambda \leq 1 \\
\bD_1\T\blambda = \bSig\T\bz + \bb \\
  \bD_2\T\blambda = 0 \\
  \blambda \geq 0
\end{cases}
\label{eq:app:Roos-reformulation-2}
\end{equation}
This problem is in fact an adjustable robust optimization problem for the uncertain variable $\bz\in G^\circ$ and the adjustable variable $\blambda$. 
In order to obtain a conservative and tractable approximation to \eqref{eq:app:Roos-reformulation-2}, and hence to \eqref{eq:generic-robust-strength-constraint}, we now restrict $\blambda$ to a linear decision rule such as:
 \begin{equation}
\blambda = \bv - \bV\T\bz
\end{equation}
with $\bv \in \RR^r$ and $\bV\in \RR^{d\times r}$ being now static additional variables to be optimized\footnote{Note that, without loss of generality, the minus and transposition signs are introduced to yield a more pleasing final formulation}. Classical robust optimization reformulations techniques can now be applied.\\

Let us first handle the positivity constraint $\blambda \geq 0$. Since $G$ contains the origin, we have $G^{\circ\circ} = G$, so that:
\begin{align}
& \blambda = \bv - \bV\T\bz \geq 0, \quad\forall \bz \in G^{\circ}  \\
\Leftrightarrow \quad & \sup_{\bz\in G^{\circ}} \:\{(\bV_i)\T\bz\} \leq v_i \quad \forall i=1,\ldots, r \\
\Leftrightarrow \quad & g(\bV_i) \leq v_i \quad \forall i=1,\ldots, r
\end{align}
where $\bV_i$ is $i$-th column of $\bV$.

Similarly the first constraint in \eqref{eq:app:Roos-reformulation-2} can be rewritten as:
\begin{align}
& \bsig\T\bz - g^*(\bz) + \bd\T\bv - \bd\T\bV\T\bz  \leq 1, \quad\forall \bz \in G^{\circ}  \\
\Leftrightarrow \quad & \sup_{\bz\in G^{\circ}} \:\{(\bsig-\bV\bd)\T\bz\} + \bd\T\bv \leq 1 \\
\Leftrightarrow \quad & g(\bsig-\bV\bd) + \bd\T\bv \leq 1
\end{align}
Finally, $G^\circ$ being fully dimensional, the equality constraints reduce to:
\begin{align}
\bD_1\T\bv = \bb \\
  \bD_2\T\bv = 0 \\
\bV\bD_1+\bSig = 0\\
  \bV\bD_2 = 0
\end{align}
which finally yields \eqref{eq:Roos-approximation}.
\qed
\end{proof}

\subsection{Box uncertainty}

\begin{example}[The box uncertainty case]
In the box uncertainty case $\Uu=\Uu_\infty$, \eqref{eq:polyhedral-uncertainty} corresponds to:
\begin{align}
\bD_1 &= \begin{bmatrix}
\phantom{-}\mathbf{I}_m \\ -\mathbf{I}_m
\end{bmatrix} \in \RR^{2m\times m} \notag\\
\bD_2&=0 \label{eq:box-polyhedron}\\
\bd &= \mathbf{e}_m=(1,\ldots,1)\T\in\RR^m \notag
\end{align}
We then have:
\begin{proposition}[Roos et al. approximation - box uncertainty]
If $\Uu=\Uu_\infty$, the robust constraint \eqref{eq:generic-robust-strength-constraint} can be conservatively approximated using \thref{th:Roos-approximation} as follows:
\begin{equation}
\exists \bw\in\mathbb{R}^m,\: \bW\in \mathbb{R}^{d\times m} \st
\begin{cases} \displaystyle{\sum_{j=1}^m w_j + g\left(\bsig - \sum_{j=1}^m \bW_j\right) \leq 1} \\
 g(\bW_j - \bSig_j) -b_j \leq w_j \quad \forall j=1,\ldots,m\\
 g(\bW_j + \bSig_j) + b_j \leq w_j
\end{cases}\label{eq:app:box-uncertainty-approx}
\end{equation}
\end{proposition}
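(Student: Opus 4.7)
My plan is to specialize \thref{th:Roos-approximation} to the polyhedral description (\ref{eq:box-polyhedron}) of the box uncertainty and then perform a symmetrising change of variables that exploits the block structure of $\bD_1$. Inserting $r = 2m$, $\bD_2 = 0$, together with $\bD_1$ and $\bd$ from (\ref{eq:box-polyhedron}) into \thref{th:Roos-approximation}, the constraints involving $\bD_2$ drop out, leaving a dual variable $\bv \in \RR^{2m}$ and matrix $\bV \in \RR^{d \times 2m}$ subject to $\bD_1\T \bv = \bb$, $\bV \bD_1 = -\bSig$, the cone-type conditions $g(\bV_i) \leq v_i$ for $i = 1,\ldots,2m$, and $g(\bsig - \bV\bd) + \bd\T \bv \leq 1$.

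The next step is to split $\bv = (\bv^+, \bv^-)$ and $\bV = [\bV^+, \bV^-]$ into two blocks of size $m$, so that the equality constraints read $\bv^+ - \bv^- = \bb$ and $\bV^+ - \bV^- = -\bSig$. I then introduce the symmetric combinations $w_j := v_j^+ + v_j^-$ and $\bW_j := \bV_j^+ + \bV_j^-$; these, combined with the equality constraints, give $v_j^\pm = (w_j \pm b_j)/2$ and $\bV_j^\pm = (\bW_j \mp \bSig_j)/2$, i.e.\ an affine bijection between the equality-constrained $(\bv, \bV)$ and the unconstrained $(\bw, \bW)$. This reparameterization eliminates the equality constraints outright.

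The essential analytic ingredient is the positive homogeneity of the gauge function $g$, which lets me clear the factor of $1/2$ produced by the reparameterization. Each constraint $g(\bV_j^\pm) \leq v_j^\pm$ thereby becomes $g(\bW_j \mp \bSig_j) \leq w_j \pm b_j$, recovering the last two lines of (\ref{eq:app:box-uncertainty-approx}). For the remaining constraint I observe that $\bd\T \bv = \sum_j w_j$ and $\bV\bd = \sum_j \bW_j$ (the $\pm \bSig_j$ contributions cancelling in pairs), so that $g(\bsig - \bV\bd) + \bd\T\bv \leq 1$ becomes $g(\bsig - \sum_j \bW_j) + \sum_j w_j \leq 1$, the first line of the claim.

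I do not foresee a real obstacle: once the block decomposition of $\bD_1$ and the symmetrising change of variables have been identified, what remains is routine bookkeeping. The only point worth verifying carefully is that the reparameterization is genuinely a bijection between the two feasible sets, but this is immediate since the map is affine-linear with a triangular structure — given $(\bw, \bW)$, the equality constraints determine $(\bv^-, \bV^-)$ uniquely from $(\bv^+, \bV^+)$, and conversely — so no feasible points are lost or introduced and the two formulations are strictly equivalent.
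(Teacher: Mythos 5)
Your proposal is correct and follows essentially the same route as the paper's proof: specializing \thref{th:Roos-approximation} with the block description $\bD_1=[\mathbf{I}_m;-\mathbf{I}_m]$, $\bD_2=0$, splitting $(\bv,\bV)$ into $(\bv^\pm,\bV^\pm)$, passing to the symmetric combinations $\bw=\bv^++\bv^-$, $\bW=\bV^++\bV^-$, and invoking the positive homogeneity of $g$ to absorb the factors of $1/2$. The sign bookkeeping checks out, so nothing further is needed.
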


\begin{proof}
The use of \thref{th:Roos-approximation} with \eqref{eq:box-polyhedron} yields:
\begin{equation}
\exists \bv^+,\bv^-\in\mathbb{R}^m,\: \bV^+,\bV^-\in \mathbb{R}^{d\times m} 
\st
\begin{cases} \displaystyle{g\left(\bsig - \sum_{j=1}^m(\bV^+_j+\bV^-_j)\right) + \sum_{j=1}^m(v_j^++v_j^-) \leq 1} \\
 g(\bV_j^+) \leq v_j^+ \quad \forall j=1,\ldots,m\\
 g(\bV_j^-) \leq v_j^- \\
 \bv^+ - \bv^- = \bb\\
 \bV^+ - \bV^- + \bSig = 0
\end{cases}\label{eq:app:box-uncertainty-approx-1}
\end{equation}

Introducing $\bW = \bV^++\bV^-$ and $\bw=\bv^++\bv^-$, we have $\bV^\pm = (\bW \mp \bSig)/2$ and $\bv^\pm = (\bw\pm\bb)/2$ so that \eqref{eq:app:box-uncertainty-approx-1} becomes:
\begin{equation}
\exists \bw\in\mathbb{R}^m,\: \bW\in \mathbb{R}^{d\times m} 
\st
\begin{cases} \displaystyle{g\left(\bsig - \sum_{j=1}^m\bW_j\right) + \sum_{j=1}^m w_j\leq 1} \\
 g((\bW_j - \bSig_j)/2) \leq (w_j+b_j)/2 \quad \forall j=1,\ldots,m\\
 g((\bW_j + \bSig_j)/2) \leq (w_j-b_j)/2 
\end{cases}\label{eq:app:box-uncertainty-approx-2}
\end{equation}
which is indeed \eqref{eq:app:box-uncertainty-approx} owing to the homogeneity of $g$.\qed
\end{proof}
\end{example}

\subsection{Cross-polytope uncertainty}
\begin{example}[The cross-polytope uncertainty case]
In the cross-polytope uncertainty case $\Uu=\Uu_1$, \eqref{eq:polyhedral-uncertainty} corresponds to:
\begin{align}
\bD_1 &= \begin{bmatrix}
\phantom{-}\mathbf{I}_m \\ -\mathbf{I}_m\\0
\end{bmatrix} \in \RR^{2m+1\times m} \notag\\
\bD_2&=\begin{bmatrix}
-\mathbf{I}_m \\ -\mathbf{I}_m\\ \mathbf{e}_m\T
\end{bmatrix} \in \RR^{2m+1\times m} \label{eq:cross-poly-polyhedron}\\
\bd &= (0, \ldots, 0, 1)\T\in\RR^{2m+1} \notag
\end{align}
We then have:
\begin{proposition}[Roos et al. approximation - cross-polytope uncertainty]
If $\Uu=\Uu_1$, the robust constraint \eqref{eq:generic-robust-strength-constraint} can be conservatively approximated using \thref{th:Roos-approximation} as follows:
\begin{equation}
\exists w\in\mathbb{R},\: \bW\in \mathbb{R}^{d} \st
\begin{cases} \displaystyle{w + g\left(\bsig - \bW\right) \leq 1} \\
 g(\bW - \bSig_j) -b_j \leq w \quad \forall j=1,\ldots,m\\
 g(\bW + \bSig_j) + b_j \leq w
\end{cases}\label{eq:app:cross-poly-uncertainty-approx}
\end{equation}
\end{proposition}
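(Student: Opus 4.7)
The plan is to parallel the box-uncertainty derivation, applying \thref{th:Roos-approximation} with the specific polyhedral data of $\Uu_1$ given in \eqref{eq:cross-poly-polyhedron} and then simplifying the resulting dual variables. First I would write out the representation of $\Uu_1$ with slack variables $\bxi\in\RR^m$ (modelling $\xi_j = |\zeta_j|$): namely $\zeta_j-\xi_j\leq 0$, $-\zeta_j-\xi_j\leq 0$ and $\sum_j\xi_j\leq 1$, which justifies the matrices $\bD_1$, $\bD_2$ and $\bd$ of \eqref{eq:cross-poly-polyhedron} and gives $r=2m+1$ rows.

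Next I would apply \thref{th:Roos-approximation} and partition the dual vector as $\bv=(\bv^+,\bv^-,v^0)\in\RR^m\times\RR^m\times\RR$ and the matrix $\bV=[\bV^+\ \bV^-\ \bV^0]$ with $\bV^\pm\in\RR^{d\times m}$ and $\bV^0\in\RR^d$. The equality constraints of \eqref{eq:Roos-approximation} then read
\begin{align*}
\bv^+-\bv^- &= \bb, & -\bv^+-\bv^-+v^0\mathbf{e}_m &= 0,\\
\bV^+-\bV^-+\bSig &= 0, & -\bV^+-\bV^-+\bV^0\mathbf{e}_m\T &= 0.
\end{align*}
Setting $w:=v^0$ and $\bW:=\bV^0$, solving these linear systems gives $v^\pm_j=(w\pm b_j)/2$ and $\bV^\pm_j=(\bW\mp\bSig_j)/2$, exactly as in the box case but with a single scalar $w$ and a single vector $\bW$ replacing the $m$-indexed $\bw$ and $\bW$ columns.

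I would then substitute into the remaining constraints of \eqref{eq:Roos-approximation}. The main inequality $g(\bsig-\bV\bd)+\bd\T\bv\leq 1$ becomes $g(\bsig-\bW)+w\leq 1$ since $\bV\bd=\bV^0=\bW$ and $\bd\T\bv=v^0=w$. The cone constraints $g(\bV_i)\leq v_i$ split into three families corresponding to the three blocks of $\bv$: for $i=j$, $g((\bW-\bSig_j)/2)\leq (w+b_j)/2$; for $i=m+j$, $g((\bW+\bSig_j)/2)\leq (w-b_j)/2$; and for $i=2m+1$, $g(\bW)\leq w$. Positive homogeneity of $g$ clears the factors of $1/2$ and yields the two parameterised inequalities stated in \eqref{eq:app:cross-poly-uncertainty-approx}.

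The only potentially non-obvious point, and the step I would flag explicitly, is that the extra inequality $g(\bW)\leq w$ coming from the last block does \emph{not} appear in the stated proposition. It is redundant: writing $\bW=\tfrac12(\bW-\bSig_j)+\tfrac12(\bW+\bSig_j)$ and using convexity of $g$ together with the two parameterised constraints gives $g(\bW)\leq \tfrac12(w+b_j)+\tfrac12(w-b_j)=w$, so the $(2m+1)$-th cone constraint is implied and may be dropped. This justifies the simpler form in the statement, completing the proof. I expect no substantive obstacle beyond careful bookkeeping of the block partition and signs.
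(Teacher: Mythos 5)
Your proposal is correct and follows essentially the same route as the paper's own proof: apply \thref{th:Roos-approximation} with the block data \eqref{eq:cross-poly-polyhedron}, solve the equality constraints for the dual blocks, use positive homogeneity of $g$ to clear the factors of $1/2$, and discard the redundant constraint $g(\bW)\leq w$ by sub-additivity/convexity. The only difference is an immaterial labelling of the $\pm$ blocks: your $v^\pm_j=(w\pm b_j)/2$ and $\bV^\pm_j=(\bW\mp\bSig_j)/2$ is the correct solution of the linear system as written (the paper's displayed $\pm/\mp$ is swapped, a typo that does not affect the resulting pair of inequalities).
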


\begin{proof}
The use of \thref{th:Roos-approximation} with \eqref{eq:cross-poly-polyhedron} yields:
\begin{equation}
\begin{gathered}
\exists \bv^+,\bv^-\in\mathbb{R}^m,w\in\RR \: \bV^+,\bV^-\in \mathbb{R}^{d\times m}, \bW \in\RR^d
\st\\ 
\begin{cases} \displaystyle{g\left(\bsig -\bW\right) + w \leq 1} \\
 g(\bV_j^+) \leq v_j^+ \quad \forall j=1,\ldots,m\\
 g(\bV_j^-) \leq v_j^- \\
 g(\bW) \leq w \\
 \bv^+ - \bv^- = \bb\\
 -\bv^+ - \bv^- + w\mathbf{e}_m = \bb\\
 \bV^+ - \bV^- + \bSig = 0 \\
 -\bV^+ - \bV^- + \bW\mathbf{e}_m\T = 0
\end{cases}\label{eq:app:cross-poly-uncertainty-approx-1}
\end{gathered}
\end{equation}

From the equality constraints, we get:
\begin{equation}
\bV^\pm_j = (\bW \pm \bSig_j)/2, \quad v^\pm_j = (w \mp b_j)/2, \quad \forall j=1,\ldots,m
\end{equation}
 so that \eqref{eq:app:cross-poly-uncertainty-approx-1} becomes:
\begin{equation}
\exists \bw\in\mathbb{R},\: \bW\in \mathbb{R}^{d} 
\st
\begin{cases} \displaystyle{g\left(\bsig - \bW\right) + w\leq 1} \\
 g((\bW - \bSig_j)/2) \leq (w+b_j)/2 \quad \forall j=1,\ldots,m\\
 g((\bW + \bSig_j)/2) \leq (w-b_j)/2 \\
 g(\bW) \leq w
\end{cases}\label{eq:app:cross-poly-uncertainty-approx-2}
\end{equation}
The last constraint is in fact redundant since $g$ is homogeneous, therefore sub-additive, and we have:
\begin{equation}
g(\bW) = g\left(\dfrac{\bW+\bSig_j}{2} + \dfrac{\bW-\bSig_j}{2}\right) \leq g\left(\dfrac{\bW+\bSig_j}{2}\right) + g\left(\dfrac{\bW-\bSig_j}{2}\right) \leq w
\end{equation}
Hence, \eqref{eq:app:cross-poly-uncertainty-approx-2} is indeed \eqref{eq:app:cross-poly-uncertainty-approx}.\qed
\end{proof}

\end{example}

\subsection{Tightness compared with Bertsimas and Sim approximation}
The approximation of \thref{th:Roos-approximation} is tighter than that of \thref{th:Bertsimas-approximation} \citep{Bertsimas2020RobustCO}. For instance, taking $\bW=0$ in both \eqref{eq:app:box-uncertainty-approx} yields:
\begin{equation}
\exists \bw\in\mathbb{R}^m \st
\begin{cases} \displaystyle{\sum_{j=1}^m w_j + g(\bsig) \leq 1} \\
 g(- \bSig_j) -b_j \leq w_j \quad \forall j=1,\ldots,m\\
 g(\bSig_j) + b_j \leq w_j
\end{cases}
\end{equation}
We have $w_j \geq \max\{g(- \bSig_j) -b_j;g(\bSig_j) + b_j\}=s_j$ in \eqref{BS-s-definition}. Noticing that $g$ is positive so is $s_j$. Hence: 
\begin{equation}
\underbrace{\sum_{j=1}^m s_j}_{\|\bs\|_1=\|\bs\|_*} + g(\bsig)  \leq \sum_{j=1}^m w_j + g(\bsig)\leq 1
\end{equation}
recovering \thref{th:Bertsimas-approximation} for the box uncertainty.\\

Similarly, taking $\bW=0$ in \eqref{eq:app:cross-poly-uncertainty-approx} yields:
\begin{equation}
\exists w\in\mathbb{R} \st
\begin{cases} \displaystyle{w + g(\bsig) \leq 1} \\
 g(- \bSig_j) -b_j \leq w \quad \forall j=1,\ldots,m\\
 g( \bSig_j) + b_j \leq w
\end{cases}
\end{equation}
that is $w \geq s_j\geq 0$ $\forall j=1,\ldots,m$, hence:
\begin{equation}
\underbrace{\max_{j=1,\ldots,m}\{s_j\}}_{\|\bs\|_\infty=\|\bs\|_*} + g(\bsig)  \leq w + g(\bsig)\leq 1
\end{equation}
recovering \thref{th:Bertsimas-approximation} for the cross-polytope uncertainty.\\

\bibliographystyle{apalike}
\bibliography{robust_optimization,uncertain_LA,fenics_limit_analysis}

\begin{thebibliography}{}

\bibitem[Ali, 2016]{ali2016application}
Ali, A. (2016).
\newblock {\em Application of stochastic limit analysis to geotechnical
  stability problems}.
\newblock PhD thesis, Centre of Excellence for Geotechnical Science and
  Engineering, University of Newcastle, Australia.

\bibitem[Alibrandi and Ricciardi, 2008]{alibrandi2008use}
Alibrandi, U. and Ricciardi, G. (2008).
\newblock The use of stochastic stresses in the static approach of
  probabilistic limit analysis.
\newblock {\em International journal for numerical methods in engineering},
  73(6):747--782.

\bibitem[Augusti et~al., 1984]{augusti1984probabilistic}
Augusti, G., Baratta, A., and Casciati, F. (1984).
\newblock {\em Probabilistic methods in structural engineering}.
\newblock CRC Press.

\bibitem[Ben-Haim, 1994]{ben1994non}
Ben-Haim, Y. (1994).
\newblock A non-probabilistic concept of reliability.
\newblock {\em Structural safety}, 14(4):227--245.

\bibitem[Ben-Haim, 1995]{ben1995non}
Ben-Haim, Y. (1995).
\newblock A non-probabilistic measure of reliability of linear systems based on
  expansion of convex models.
\newblock {\em Structural safety}, 17(2):91--109.

\bibitem[Ben-Haim, 2006]{ben2006info}
Ben-Haim, Y. (2006).
\newblock {\em Info-gap decision theory: decisions under severe uncertainty}.
\newblock Elsevier.

\bibitem[Ben-Tal et~al., 2009]{ben2009robust}
Ben-Tal, A., El~Ghaoui, L., and Nemirovski, A. (2009).
\newblock {\em Robust optimization}.
\newblock Princeton university press.

\bibitem[Ben-Tal et~al., 2004]{ben2004adjustable}
Ben-Tal, A., Goryashko, A., Guslitzer, E., and Nemirovski, A. (2004).
\newblock Adjustable robust solutions of uncertain linear programs.
\newblock {\em Mathematical programming}, 99(2):351--376.

\bibitem[Bertsimas et~al., 2011]{bertsimas2011theory}
Bertsimas, D., Brown, D.~B., and Caramanis, C. (2011).
\newblock Theory and applications of robust optimization.
\newblock {\em SIAM review}, 53(3):464--501.

\bibitem[Bertsimas et~al., 2020]{Bertsimas2020RobustCO}
Bertsimas, D., den Hertog, D., Pauphilet, J., and Zhen, J. (2020).
\newblock Robust convex optimization: A new perspective that unifies and
  extends.
\newblock {\em preprint, Optimization Online}.

\bibitem[Bertsimas and Sim, 2004]{bertsimas2004price}
Bertsimas, D. and Sim, M. (2004).
\newblock The price of robustness.
\newblock {\em Operations research}, 52(1):35--53.

\bibitem[Bertsimas and Sim, 2006]{bertsimas2006tractable}
Bertsimas, D. and Sim, M. (2006).
\newblock Tractable approximations to robust conic optimization problems.
\newblock {\em Mathematical programming}, 107(1):5--36.

\bibitem[Bisbos and Pardalos, 2007]{bisbos2007second}
Bisbos, C. and Pardalos, P. (2007).
\newblock Second-order cone and semidefinite representations of material
  failure criteria.
\newblock {\em Journal of Optimization Theory and Applications},
  134(2):275--301.

\bibitem[Bjerager and Ditlevsen, 1983]{bjerager1983influence}
Bjerager, P. and Ditlevsen, O. (1983).
\newblock Influence of uncertainty of local friction angle and cohesion on the
  stability of slope in coulomb soil.
\newblock In {\em Reliability Theory and Its Application in Structural and Soil
  Mechanics}, pages 567--579. Springer.

\bibitem[Bleyer, 2019]{bleyer2019automating}
Bleyer, J. (2019).
\newblock Automating the formulation and resolution of convex variational
  problems: applications from image processing to computational mechanics.
\newblock {\em arXiv preprint arXiv:1911.13185}.

\bibitem[Chen, 2007]{chen2007plasticity}
Chen, W.-F. (2007).
\newblock {\em Plasticity in reinforced concrete}.
\newblock J. Ross Publishing.

\bibitem[Chen, 2013]{chen2013limit}
Chen, W.-F. (2013).
\newblock {\em Limit analysis and soil plasticity}.
\newblock Elsevier.

\bibitem[Coussy, 2004]{coussy2004poromechanics}
Coussy, O. (2004).
\newblock {\em Poromechanics}.
\newblock John Wiley \& Sons.

\bibitem[de~Buhan, 1986]{de1986fundamental}
de~Buhan, P. (1986).
\newblock {\em A fundamental approach to the yield design of reinforced soil
  structures}.
\newblock PhD thesis, Th{\`e}se d'{E}tat, Paris VI.

\bibitem[De~Buhan and De~Felice, 1997]{de1997homogenization}
De~Buhan, P. and De~Felice, G. (1997).
\newblock A homogenization approach to the ultimate strength of brick masonry.
\newblock {\em Journal of the Mechanics and Physics of Solids},
  45(7):1085--1104.

\bibitem[Diab et~al., 2021]{diab2021progressive}
Diab, M. E.~H., Orcesi, A., Desprez, C., and Bleyer, J. (2021).
\newblock A progressive collapse modelling strategy coupling the yield design
  theory with non-linear analysis.
\newblock {\em Engineering Structures}, 241:111832.

\bibitem[Diamond and Boyd, 2016]{diamond2016cvxpy}
Diamond, S. and Boyd, S. (2016).
\newblock {CVXPY}: {A} {P}ython-embedded modeling language for convex
  optimization.
\newblock {\em Journal of Machine Learning Research}, 17(83):1--5.

\bibitem[Fu, 2009]{fu2009progressive}
Fu, F. (2009).
\newblock Progressive collapse analysis of high-rise building with 3-d finite
  element modeling method.
\newblock {\em Journal of Constructional Steel Research}, 65(6):1269--1278.

\bibitem[Gabrel et~al., 2014]{gabrel2014recent}
Gabrel, V., Murat, C., and Thiele, A. (2014).
\newblock Recent advances in robust optimization: An overview.
\newblock {\em European journal of operational research}, 235(3):471--483.

\bibitem[Gilbert and Melbourne, 1994]{gilbert1994rigid}
Gilbert, M. and Melbourne, C. (1994).
\newblock Rigid-block analysis of masonry structures.
\newblock {\em Structural engineer}, 72(21).

\bibitem[Gorissen et~al., 2015]{gorissen2015practical}
Gorissen, B.~L., Yan{\i}ko{\u{g}}lu, {\.I}., and den Hertog, D. (2015).
\newblock A practical guide to robust optimization.
\newblock {\em Omega}, 53:124--137.

\bibitem[Graine et~al., 2021]{graine20213d}
Graine, N., Hjiaj, M., and Krabbenhoft, K. (2021).
\newblock 3d failure envelope of a rigid pile embedded in a cohesive soil using
  finite element limit analysis.
\newblock {\em International Journal for Numerical and Analytical Methods in
  Geomechanics}, 45(2):265--290.

\bibitem[Hill, 1950]{hill1950mathematical}
Hill, R. (1950).
\newblock {\em The mathematical theory of plasticity}.
\newblock Clarendon Press, Oxford.

\bibitem[Huang et~al., 2013]{huang2013quantitative}
Huang, J., Lyamin, A., Griffiths, D., Krabbenhoft, K., and Sloan, S. (2013).
\newblock Quantitative risk assessment of landslide by limit analysis and
  random fields.
\newblock {\em Computers and Geotechnics}, 53:60--67.

\bibitem[Jiang et~al., 2022]{jiang2022advances}
Jiang, S.-H., Huang, J., Griffiths, D., and Deng, Z.-P. (2022).
\newblock Advances in reliability and risk analyses of slopes in spatially
  variable soils: A state-of-the-art review.
\newblock {\em Computers and Geotechnics}, 141:104498.

\bibitem[Johansen, 1962]{johansen1962yield}
Johansen, K. (1962).
\newblock {\em Yield-line theory}.
\newblock Cement and Concrete Association London.

\bibitem[Kanno and Takewaki, 2007]{kanno2007worst}
Kanno, Y. and Takewaki, I. (2007).
\newblock Worst case plastic limit analysis of trusses under uncertain loads
  via mixed 0-1 programming.
\newblock {\em Journal of Mechanics of Materials and Structures},
  2(2):245--273.

\bibitem[Karmarkar, 1984]{karmarkar1984new}
Karmarkar, N. (1984).
\newblock A new polynomial-time algorithm for linear programming.
\newblock In {\em Proceedings of the sixteenth annual ACM symposium on Theory
  of computing}, pages 302--311. ACM.

\bibitem[Kasama and Whittle, 2016]{kasama2016effect}
Kasama, K. and Whittle, A.~J. (2016).
\newblock Effect of spatial variability on the slope stability using random
  field numerical limit analyses.
\newblock {\em Georisk: Assessment and Management of Risk for Engineered
  Systems and Geohazards}, 10(1):42--54.

\bibitem[Krabbenh{\o}ft et~al., 2008]{krabbenhoft2008three}
Krabbenh{\o}ft, K., Lyamin, A.~V., and Sloan, S.~W. (2008).
\newblock Three-dimensional mohr--coulomb limit analysis using semidefinite
  programming.
\newblock {\em Communications in Numerical Methods in Engineering},
  24(11):1107--1119.

\bibitem[Kusakabe and Takeyama, 2010]{kusakabe2010bearing}
Kusakabe, O. and Takeyama, T. (2010).
\newblock Bearing capacity of shallow footing under combined loading.
\newblock In {\em Proceedings of the Korean Geotechical Society Conference},
  pages 3--25. Korean Geotechnical Society.

\bibitem[Makrodimopoulos, 2010]{makrodimopoulos2010remarks}
Makrodimopoulos, A. (2010).
\newblock Remarks on some properties of conic yield restrictions in limit
  analysis.
\newblock {\em International Journal for Numerical Methods in Biomedical
  Engineering}, 26(11):1449--1461.

\bibitem[Marandi and Den~Hertog, 2018]{marandi2018static}
Marandi, A. and Den~Hertog, D. (2018).
\newblock When are static and adjustable robust optimization problems with
  constraint-wise uncertainty equivalent?
\newblock {\em Mathematical programming}, 170(2):555--568.

\bibitem[Martin and Makrodimopoulos, 2008]{martin_finite-element_2008}
Martin, C.~M. and Makrodimopoulos, A. (2008).
\newblock Finite-element limit analysis of {Mohr}–{Coulomb} materials in 3d
  using semidefinite programming.
\newblock {\em Journal of Engineering Mechanics}, 134(4):339--347.

\bibitem[Matsuda and Kanno, 2008]{matsuda2008robustness}
Matsuda, Y. and Kanno, Y. (2008).
\newblock Robustness analysis of structures based on plastic limit analysis
  with uncertain loads.
\newblock {\em Journal of Mechanics of Materials and Structures},
  3(2):213--241.

\bibitem[Nesterov et~al., 1994]{nesterov1994interior}
Nesterov, Y., Nemirovskii, A., and Ye, Y. (1994).
\newblock {\em Interior-point polynomial algorithms in convex programming},
  volume~13.
\newblock SIAM.

\bibitem[Nielsen and Hoang, 2016]{nielsen2016limit}
Nielsen, M.~P. and Hoang, L.~C. (2016).
\newblock {\em Limit analysis and concrete plasticity}.
\newblock CRC press.

\bibitem[Pastor et~al., 2004]{pastor2004ductile}
Pastor, J., Francescato, P., Trillat, M., Loute, E., and Rousselier, G. (2004).
\newblock Ductile failure of cylindrically porous materials. part ii: other
  cases of symmetry.
\newblock {\em European Journal of Mechanics-A/Solids}, 23(2):191--201.

\bibitem[Portioli et~al., 2014]{portioli2014limit}
Portioli, F., Casapulla, C., Gilbert, M., and Cascini, L. (2014).
\newblock Limit analysis of 3d masonry block structures with non-associative
  frictional joints using cone programming.
\newblock {\em Computers \& Structures}, 143:108--121.

\bibitem[Roos et~al., 2018]{roos2018approximation}
Roos, E., den Hertog, D., Ben-Tal, A., de~Ruiter, F., and Zhen, J. (2018).
\newblock Approximation of hard uncertain convex inequalities.
\newblock {\em Available on Optimization Online}.

\bibitem[Salen{\c{c}}on, 1983]{salencon1983calcul}
Salen{\c{c}}on, J. (1983).
\newblock {\em Calcul {\`a} la rupture et analyse limite}.
\newblock Presses de l'Ecole Nationale des Ponts et Chauss{\'e}es.

\bibitem[Salen{\c{c}}on, 2013]{salenccon2013yield}
Salen{\c{c}}on, J. (2013).
\newblock {\em Yield {D}esign}.
\newblock London, Hoboken : ISTE Ltd., John Wiley \& Sons, Inc.

\bibitem[Save, 1995]{save1995atlas}
Save, M. (1995).
\newblock {\em Atlas of limit loads of metal plates, shells and disks}.
\newblock North-Holland Series in Applied Mathematics \& Mechanics. Elsevier
  Science Ltd.

\bibitem[Save et~al., 1997]{save1997plastic}
Save, M.~A., Massonnet, C.~E., and de~Saxce, G. (1997).
\newblock {\em Plastic limit analysis of plates, shells, and disks}.
\newblock North Holland.

\bibitem[Serra, 1982]{serra1982image}
Serra, J. (1982).
\newblock {\em Image analysis and mathematical morphol-ogy}.
\newblock Academic press.

\bibitem[Staat, 2014]{staat2014limit}
Staat, M. (2014).
\newblock Limit and shakedown analysis under uncertainty.
\newblock {\em International Journal of Computational Methods}, 11(03):1343008.

\bibitem[Suquet, 1985]{suquet1985elements}
Suquet, P. (1985).
\newblock Elements of homogenization for inelastic solid mechanics.
\newblock {\em Homogenization techniques for composite media}, 272:193--278.

\bibitem[Takeda et~al., 2008]{takeda2008adjustable}
Takeda, A., Taguchi, S., and T{\"u}t{\"u}nc{\"u}, R. (2008).
\newblock Adjustable robust optimization models for a nonlinear two-period
  system.
\newblock {\em Journal of Optimization Theory and Applications},
  136(2):275--295.

\bibitem[Taliercio and Landriani, 1988]{taliercio1988failure}
Taliercio, A. and Landriani, G.~S. (1988).
\newblock A failure condition for layered rock.
\newblock {\em International Journal of Rock Mechanics and Mining Sciences \&
  Geomechanics Abstracts}, 25(5):299--305.

\bibitem[Tran et~al., 2018]{tran2018shakedown}
Tran, N., Tran, T.~N., Matthies, H., Stavroulakis, G., and Staat, M. (2018).
\newblock Shakedown analysis under stochastic uncertainty by chance constrained
  programming.
\newblock In {\em Advances in Direct Methods for Materials and Structures},
  pages 85--103. Springer.

\bibitem[Tran and Staat, 2020]{tran2020direct}
Tran, N.~T. and Staat, M. (2020).
\newblock Direct plastic structural design under lognormally distributed
  strength by chance constrained programming.
\newblock {\em Optimization and Engineering}, 21(1):131--157.

\bibitem[Tran and Staat, 2021]{tran2021direct}
Tran, N.~T. and Staat, M. (2021).
\newblock Direct plastic structural design under random strength and random
  load by chance constrained programming.
\newblock {\em European Journal of Mechanics-A/Solids}, 85:104106.

\bibitem[Vincent et~al., 2018]{vincent2018yield}
Vincent, H., Arquier, M., Bleyer, J., and de~Buhan, P. (2018).
\newblock Yield design-based numerical analysis of three-dimensional reinforced
  concrete structures.
\newblock {\em International Journal for Numerical and Analytical Methods in
  Geomechanics}, 42(18):2177--2192.

\bibitem[Wang and Akeju, 2016]{wang2016quantifying}
Wang, Y. and Akeju, O.~V. (2016).
\newblock Quantifying the cross-correlation between effective cohesion and
  friction angle of soil from limited site-specific data.
\newblock {\em Soils and Foundations}, 56(6):1055--1070.

\bibitem[Yang et~al., 2012]{yang2012system}
Yang, L.~F., Yu, B., and Ju, J.~W. (2012).
\newblock System reliability analysis of spatial variance frames based on
  random field and stochastic elastic modulus reduction method.
\newblock {\em Acta Mechanica}, 223(1):109--124.

\bibitem[Yan{\i}ko{\u{g}}lu et~al., 2019]{yanikouglu2019survey}
Yan{\i}ko{\u{g}}lu, {\.I}., Gorissen, B.~L., and den Hertog, D. (2019).
\newblock A survey of adjustable robust optimization.
\newblock {\em European Journal of Operational Research}, 277(3):799--813.

\end{thebibliography}
\end{document}